\theoremstyle{plain}
\newtheorem{thm}{\protect\theoremname}
\theoremstyle{plain}
\newtheorem{lem}[thm]{\protect\lemmaname}
\theoremstyle{plain}
\newtheorem{cor}[thm]{\protect\corollaryname}
\theoremstyle{definition}
\newtheorem{example}[thm]{\protect\examplename}
\newenvironment{proof}[1][\protect\proofname]{\par
\normalfont\topsep6\p@\@plus6\p@\relax
\trivlist
\itemindent\parindent
\item[\hskip\labelsep
\scshape
#1]\ignorespaces
}{%
\endtrivlist\@endpefalse
}
\theoremstyle{definition}
\newtheorem{defn}[thm]{\protect\definitionname}
\date{}
\providecommand{\corollaryname}{Corollary}
\providecommand{\definitionname}{Definition}
\providecommand{\examplename}{Example}
\providecommand{\lemmaname}{Lemma}
\providecommand{\proofname}{Proof}
\providecommand{\theoremname}{Theorem}
\begin{document}
\global\long\def\FF{\mathbb{F}}
\global\long\def\KK{\mathbb{K}}
\global\long\def\NN{\mathbb{N}}
\global\long\def\RR{\mathbb{R}}
\global\long\def\CC{\mathbb{C}}
\global\long\def\ZZ{\mathbb{Z}}

\title{Graded Embeddings of Finite Dimensional Simple Graded Algebras}

\author{Ofir David}

\maketitle
\begin{center}
\textit{\small Department of Mathematics, Technion-Israel Institute
of Technology, Haifa 32000, Israel}\\
\textit{\small E-mail address: }\texttt{\textit{\small ofirdav@tx.technion.ac.il}}
\par\end{center}{\small \par}
\begin{abstract}
Let $A,B$ be finite dimensional $G$-graded algebras over an algebraically
closed field $\KK$ with $char(\KK)=0$, where $G$ is an abelian
group, and let $Id_{G}(A)$ be the set of graded identities of $A$
(res. $Id_{G}(B)$). We show that if $A,B$ are $G$-simple then there
is a graded embedding $\phi:A\rightarrow B$ iff $Id_{G}(B)\subseteq Id_{G}(A)$.
We also give a weaker generalization for the case where $A$ is $G$-semisimple
and $B$ is arbitrary.
\end{abstract}

\section{Introduction}

Let $A$ be an algebra and $G$ be a group. A $G$-grading of $A$
is a decomposition $A=\bigoplus_{g\in G}A_{g}$ where $A_{g}A_{h}\subseteq A_{gh}$.
A polynomial $p(x_{g_{1},1},...,x_{g_{n},n})$ with non-commutative
graded indeterminates is called a \textit{\emph{graded identity}}
of $A$ if $p(a_{1},...,a_{n})=0$ for any assignment in $A$ such
that $a_{i}\in A_{g_{i}}$, and we define $Id_{G}(A)$ to be the set
of the graded identities of $A$. 

If $A$ and $B$ are isomorphic as $G$-graded algebras then obviously
$Id_{G}(A)=Id_{G}(B)$, and we can ask if the converse is also true.
In the current setting this is not true. For example, for any algebra
$A$ we have $Id_{G}(A)=Id_{G}(A\oplus A)$ while in general $A$
isn't isomorphic to $A\oplus A$. Another less trivial but important
example comes from scalar extension. It is well-known that if $A,B$
are algebras over a field $\FF$ with $char(\FF)=0$, $\KK$ is a
field extension of $\FF$ and $A\otimes_{\FF}\KK\cong_{G}B\otimes_{\FF}\KK$,
then $Id_{G}(A)=Id_{G}(B)$, even though $A$ doesn't have to be isomorphic
to $B$. For example the quaternion algebra $\mathbb{H}_{\RR}$ and
the matrix algebra $M_{2}(\RR)$ are not isomorphic, while $\mathbb{H}_{\RR}\otimes_{\RR}\CC\cong M_{2}(\CC)\cong M_{2}(\RR)\otimes_{\RR}\CC$
so $Id(\mathbb{H}_{\RR})=Id(M_{2}(\RR))$.

To avoid such counter examples as above, we restrict ourselves to
algebraically closed fields, and also assume that $A$ and $B$ are
finite dimensional $G$-simple algebras. In this case, it is true
that if $Id_{G}(A)=Id_{G}(B)$ then $A\cong_{G}B$. This was first
proved for $G$ abelian and the order of any finite subgroup of $G$
is invertible in $\KK$ (Koshlukov and Zaicev \cite{koshlukov_identities_2010})
and was later proved for $G$ arbitrary and $char(\KK)=0$ (Aljadeff
and Haile \cite{aljadeff_simple_2011}).

These two results show that in order to identify a finite dimensional
$G$-simple algebra, all we need to know is its graded identities.
Seeing that the equality $Id_{G}(A)=Id_{G}(B)$ means that $A\cong_{G}B$
in these cases, we can ask what other relations between two $G$-graded
algebras $A$ and $B$ can be derived from relations between their
respective ideals of identities.

If $A\subseteq B$ as graded algebras, then any polynomial identity
of $B$ is certainly an identity of $A$ so $Id_{G}(B)\subseteq Id_{G}(A)$,
so here too we can ask whether or not the converse is true. Suppose
now that we are in the non-graded case, and we have $Id(B)\subseteq Id(A)$.
If $A,B$ are finite dimensional simple algebras over an algebraically
closed field $\KK$, then by Artin-Wedderburn theorem we know that
$A=M_{n}(\KK)$ , $B=M_{m}(\KK)$ for some $m,n\in\NN$. Let $St_{r}(x_{1},...,x_{r})={\displaystyle \sum_{\sigma\in S_{r}}}(-1)^{\sigma}\prod x_{\sigma(i)}$
be the standard polynomial with $r$ indeterminates, then by Amitsur-Levitzki
theorem $St_{r}\in Id(M_{n}(\KK))$ iff $r\geq2n$. Using the Amitsur-Levitzki
theorem we first get that $St_{2m}\in Id(B)$ so $St_{2m}\in Id(A)$,
and then using it a second time we get that $2m\geq2n\;\Rightarrow\; m\geq n$.
Since $m\geq n$ we can find an injective map $\varphi:M_{n}(\KK)\rightarrow M_{m}(\KK)$,
so in the non-graded case, for two simple finite dimensional algebras
$A,B$ there is an embedding $\varphi:A\rightarrow B$ iff there is
an inclusion $Id(B)\subseteq Id(A)$. 

The matrix algebras $M_{n}(\KK)$, as in the example above, together
with $M_{n}(E)$ and $M_{n,m}(E)$, where $E$ is the infinite Grassmann
algebra, play a very important part in the classical PI theory of
non-graded algebras. For a $T$ ideal $I$ of identities and an algebra
$A$ we denote by $I(A)$ the ideal of all the evaluations of polynomials
in $I$ with elements in $A$. In \cite{kemer_varieties_1985} Kemer
showed that for any PI algebra $A$ of characteristic zero there is
a $T$ ideal $I$ such that $\nicefrac{A}{I(A)}$ is PI equivalent
to a direct sum of verbally prime algebras, and every verbally prime
algebra is PI equivalent to one of $M_{n}(\KK),\; M_{n}(E)$ or $M_{n,m}(E)$.
The question, if $Id(A)\subseteq Id(B)$ implies $B\subseteq A$,
for these three types of algebras, was answered positively by Berele
in \cite{berele_classification_1993}, with a slight exception in
the case of $M_{n,m}(E)$.

As we showed above, the proof for non-graded finite dimensional simple
algebras, or in other words, matrix algebras, is fairly simple. In
the graded case the situation is much more complicated. Each matrix
algebra $M_{n}(\KK)$ may have many non-isomorphic $G$-gradings,
and so in particular they are $G$-simple, and there are also other
graded algebras which are not isomorphic (non-graded) to matrix algebras,
but still $G$-simple. In this paper, we show that the claim is true
for $G$-simple finite dimensional algebras if the group $G$ is abelian. 
\begin{thm}
Let $G$ be an abelian group, and $\KK$ an algebraically closed field
with $char(\KK)=0$. Let $A,B$ be two $G$-graded, finite dimensional
algebras over $\KK$.
\begin{enumerate}
\item If $A,B$ are $G$-simple then there is a graded embedding $A\hookrightarrow_{G}B$
iff $Id_{G}(B)\subseteq Id_{G}(A)$.
\item If $A$ is $G$-semisimple, $G$ is finite and $B$ has a unit then
$Id_{G}(B)\subseteq Id_{G}(A)$ iff there is a graded embedding $A\hookrightarrow_{G}B^{N}$
for $N$ large enough. 
\item If $A$ is simple then we can choose $N=1$, so $Id_{G}(B)\subseteq Id_{G}(A)$
iff there is a graded embedding $A\hookrightarrow_{G}B$.
\end{enumerate}
\end{thm}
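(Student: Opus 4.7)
The forward implication of each part is immediate: a graded embedding $A\hookrightarrow_{G}B$ (or $A\hookrightarrow_{G}B^{N}$, using $Id_{G}(B^{N})=Id_{G}(B)$) clearly pulls back every graded identity, so only the converse direction in each part needs work.

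For the reverse direction of (1), the plan is to invoke the Bahturin--Sehgal--Zaicev classification of finite dimensional $G$-simple algebras over an algebraically closed characteristic zero field with $G$ abelian: every such algebra is graded isomorphic to one of the form $M_{n}(\KK^{\alpha}H)=M_{n}(\KK)\otimes\KK^{\alpha}H$, determined by a finite subgroup $H\leq G$, a 2-cocycle class $[\alpha]\in H^{2}(H,\KK^{*})$, and a tuple $\sigma\in G^{n}$ specifying an elementary grading on $M_{n}(\KK)$ combined with the canonical fine grading on $\KK^{\alpha}H$. Writing the data of $A$ and $B$ as $(H_A,[\alpha_A],\sigma_A)$ and $(H_B,[\alpha_B],\sigma_B)$, the inclusion $Id_{G}(B)\subseteq Id_{G}(A)$ must be translated into three compatibility conditions: (a) $H_A\leq H_B$; (b) $[\alpha_A]$ coincides with the restriction $[\alpha_B|_{H_A}]$ after a suitable normalization; and (c) the tuple $\sigma_A$, viewed as a multiset in $G/H_B$, is contained in the corresponding multiset attached to $\sigma_B$. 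Once (a)--(c) hold, a concrete graded block-diagonal embedding $A\hookrightarrow B$ can be written down by matching the entries of $\sigma_A$ into $\sigma_B$ and inserting $\KK^{\alpha_A}H_A\hookrightarrow\KK^{\alpha_B}H_B$.

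Relationship (a) should be essentially immediate from degree-one monomial identities, since the support of a graded algebra is read off from which homogeneous components vanish. Relationship (c) should follow by counting dimensions of homogeneous components via graded Capelli polynomials of the appropriate rank. I expect the main obstacle to be establishing (b), since the cohomology class of $\alpha_A$ in $H^{2}(H_A,\KK^{*})$ is a subtler invariant than the support or the dimensions. Here I intend to lean on the Aljadeff--Haile uniqueness theorem cited in the introduction together with a generic realization of $F_{G}\langle X\rangle/Id_{G}(A)$ inside $A\otimes R$ for an appropriate polynomial ring $R$: if (b) failed then one should be able to construct a graded identity of $B$ tailored to the twisted group algebra $\KK^{\alpha_B}H_B$ that is violated on generic graded elements of $A$, contradicting the hypothesis.

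For part (2), decompose the $G$-semisimple algebra $A=\bigoplus_{i=1}^{k}A_{i}$ into $G$-simple ideals. Each inclusion $Id_{G}(B)\subseteq Id_{G}(A)\subseteq Id_{G}(A_{i})$ combined with part (1) yields a graded embedding $\phi_{i}:A_{i}\hookrightarrow B$, and because $A_{i}A_{j}=0$ for $i\neq j$ the assembly $\phi=\bigoplus\phi_{i}:A\hookrightarrow B^{k}$ is a graded algebra homomorphism, so $N=k$ works. The hypothesis that $G$ is finite and $B$ has a unit is what guarantees the decomposition of $A$ into $G$-simple summands and the placement of each $A_{i}$ inside a unital copy of $B$ via idempotents. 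For part (3), a simple algebra $A$ is automatically $G$-simple (a graded ideal is in particular an ideal), so part (2) produces an embedding $\phi:A\hookrightarrow B^{N}$; since $A$ is simple, each composition $\pi_{j}\circ\phi:A\to B$ with $\pi_{j}$ the $j$-th projection is either injective or zero, and the injectivity of $\phi$ forces at least one of them to be injective, yielding the desired graded embedding $A\hookrightarrow B$.
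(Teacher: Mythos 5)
Your reduction of part (1) to the Bahturin--Sehgal--Zaicev classification is the right starting point, but the three compatibility conditions you propose to extract from $Id_{G}(B)\subseteq Id_{G}(A)$ are not the correct ones, and (a) and (b) are simply false as necessary conditions. Writing $A=\KK^{\alpha}H_{A}\otimes M_{\bar{s}}(\KK)$ and $B=\KK^{\beta}H_{B}\otimes M_{\bar{t}}(\KK)$, the paper's criterion involves only $H=H_{A}\cap H_{B}$ and the \emph{quotient} cocycle $\alpha|_{H}/\beta|_{H}$: the condition is that $\bar{t}$ dominates (up to translation and $\sim_{H_{B}}$) the tuple $\bar{d}\times\overline{H_{A}H_{B}:H_{B}}\times\tilde{s}$, where $d$ is the dimension of the smallest representation of $\KK^{\alpha|_{H}/\beta|_{H}}H$. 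Neither $H_{A}\leq H_{B}$ nor agreement of cohomology classes is needed, because the portion of $\KK^{\alpha}H_{A}$ not visible in $\KK^{\beta}H_{B}$ is absorbed into the \emph{matrix} part of $B$: the cosets of $H_{A}$ outside $H_{B}$ are realized by a regular-representation embedding into $M_{\overline{H_{A}H_{B}:H_{B}}}(\KK)$, and a cocycle mismatch on $H$ is paid for by an extra $M_{d}(\KK)$ factor via a smallest irreducible representation of the twisted group algebra of the quotient cocycle. A concrete counterexample to your (a): $A=\KK H$ and $B=M_{\bar{H}}(\KK)$ with $\bar{H}$ listing the elements of $H$ have the same support and satisfy $Id_{G}(B)\subseteq Id_{G}(A)$ (the regular representation is a graded embedding), yet $H_{A}=H\not\leq H_{B}=\{e\}$. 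Your support argument cannot distinguish these cases, and your proposed block-diagonal embedding ``inserting $\KK^{\alpha_{A}}H_{A}\hookrightarrow\KK^{\alpha_{B}}H_{B}$'' does not exist in general. This absorption mechanism (Lemma \ref{lem:smallest representation} plus the regular-representation construction, with the necessity direction proved via Amitsur--Levitzki after an envelope reduction to trivial $\beta$ and a block-diagonal polynomial argument) is the actual content of the proof and is missing from your plan.

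Part (2) as written is also gapped: you apply part (1) to the pair $(A_{i},B)$, but part (1) requires $B$ to be $G$-simple, whereas here $B$ is an arbitrary finite dimensional unital algebra. You need two additional steps. First, pass to the graded semisimple quotient $B_{ss}=B/J_{G}$ and show $Id_{G}(B_{ss})\subseteq Id_{G}(A_{i})$; this is not automatic (a priori one only knows $Id_{G}(B)\subseteq Id_{G}(B_{ss})$) and is proved by the nilpotence of $J_{G}$ together with the fact that in a $G$-simple algebra nonzero homogeneous elements $a,b$ admit $x$ with $axb\neq0$, so a product $\prod(h_{i}\cdot f)$ of $k=$ nilpotency-index many copies is a non-identity of $A_{i}$ but an identity of $B$ (Lemma \ref{lem:Upper bound for A_ss}); this is where $G$ finite, $B$ unital, and Cohen--Montgomery/Wedderburn--Malcev enter. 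Second, writing $B_{ss}=\bigoplus_{j}B_{j}$ with $B_{j}$ $G$-simple, you must show that $\bigcap_{j}Id_{G}(B_{j})\subseteq Id_{G}(A_{i})$ forces $Id_{G}(B_{j})\subseteq Id_{G}(A_{i})$ for a \emph{single} $j$ (again by the product trick in the simple algebra $A_{i}$); only then does part (1) give $A_{i}\hookrightarrow B_{j}\subseteq B_{ss}\hookrightarrow B$, and your assembly into $B^{N}$ goes through. Your part (3) argument via the projections $\pi_{j}\circ\phi$ is fine once these gaps are filled.
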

We also give a special case for $G$ arbitrary. Let $\left(g_{1},...,g_{n}\right)\in G^{n}$
be a tuple of elements in $G$, then we can define a $G$-grading
on $M_{n}(\KK)$ by setting $\deg(E_{i,j})=g_{i}^{-1}g_{j}$ where
$E_{i,j}$ is the zero matrix with $1$ in the $(i,j)$ location.
We call such a grading an elementary grading.
\begin{thm}
Let $G$ be an arbitrary group and $\KK$ an algebraically closed
field with $char(\KK)=0$. Let $A$ be a finite dimensional $G$-simple
algebra, and $B=M_{n}(\KK)$ with an elementary $G$-grading, then
$Id_{G}(B)\subseteq Id_{G}(A)$ iff there is a graded embedding $A\hookrightarrow_{G}B$.
\end{thm}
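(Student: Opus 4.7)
The direction $A\hookrightarrow_{G}B\Rightarrow Id_{G}(B)\subseteq Id_{G}(A)$ is immediate. For the converse the plan is to proceed in four steps.

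\textbf{Step 1 (support reduction).} For every $g\in G$ with $B_{g}=0$ the variable $x_{g}$ itself lies in $Id_{G}(B)\subseteq Id_{G}(A)$, forcing $A_{g}=0$. Hence $\mathrm{supp}(A)\subseteq\mathrm{supp}(B)$ and both gradings factor through the subgroup $L\leq G$ generated by $\mathrm{supp}(B)$. Crucially, $L$ need not be abelian, so the first theorem does not apply directly; the rigidity of the elementary grading on $B$ will have to substitute for abelianness in what follows.

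\textbf{Step 2 (structure of $A$).} By the structure theorem for finite-dimensional $G$-simple algebras over an algebraically closed field of characteristic $0$, I would write $A\cong_{G}M_{k}(D)$ where $D=\KK^{\sigma}[H]$ is a graded-division algebra with $H$ a finite subgroup of $G$ and $\sigma\in Z^{2}(H,\KK^{\times})$, and where the $G$-grading on $A$ is encoded by a tuple of shifts in $G$ together with the canonical $H$-grading on $D$.

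\textbf{Step 3 (trivialization of $\sigma$).} This is the heart of the argument and the place where the hypothesis on $B$ does its essential work. The elementary grading on $B$ is manifestly untwisted: the homogeneous basis $\{E_{ij}\}$ satisfies $E_{ij}E_{jk}=E_{ik}$ with no scalar factor. If $\sigma$ were not a coboundary, then $D$ would exhibit a genuine twisted commutation between some pair of homogeneous elements which can be packaged as a multilinear graded polynomial relation; one then shows this relation vanishes identically on the untwisted $B$ yet fails on $A$, contradicting $Id_{G}(B)\subseteq Id_{G}(A)$. Hence $\sigma$ is a coboundary, $D\cong_{G}\KK[H]$, and $A\cong_{G}M_{k}(\KK[H])$.

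\textbf{Step 4 (matching the elementary tuples).} The algebra $M_{k}(\KK[H])$, in the basis $\{E_{ij}\otimes h\}$, becomes the matrix algebra $M_{k|H|}(\KK)$ carrying an elementary $G$-grading whose tuple is built from the $k$-tuple of Step~2 together with the elements of $H$. A graded Capelli computation extracts the inequalities $\dim A_{g}\leq\dim B_{g}$ from the identity inclusion for every $g\in G$, and these inequalities then force the elementary tuple of $A$ to fit, up to global left $G$-translation and reordering, inside the tuple $(g_{1},\ldots,g_{n})$ of $B$. The graded embedding $A\hookrightarrow_{G}B$ is then realized as a block embedding.

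The main obstacle will be Step~3: isolating graded identities of the elementary matrix algebra $B$ which can only be satisfied by a $G$-simple algebra whose defining cocycle is already a coboundary. The remaining steps---support reduction, invocation of the structure theorem, and dimension matching---are comparatively standard, and Step~4 reduces to a combinatorial matching of tuples once $\sigma$ has been proven trivial.
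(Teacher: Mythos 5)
Your Step 3 is where the argument breaks, and it breaks because the claim you are trying to prove there is false. The cocycle of $A$ need \emph{not} be a coboundary under the hypothesis $Id_{G}(B)\subseteq Id_{G}(A)$. Concretely, take $G=H=\ZZ_{2}\times\ZZ_{2}$ and $\alpha$ a nontrivial cocycle, so that $A=\KK^{\alpha}H\cong M_{2}(\KK)$ as an ungraded algebra; the twisted regular representation $U_{h}\mapsto\sum_{h_{i}\in H}\alpha(h_{i},h)E_{h_{i},h_{i}h}$ is a \emph{graded} embedding of $A$ into $M_{4}(\KK)$ with the elementary grading given by the tuple $\bar{H}=(h_{1},\dots,h_{4})$, since $\deg(E_{h_{i},h_{i}h})=h_{i}^{-1}(h_{i}h)=h$. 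Hence $Id_{G}(B)\subseteq Id_{G}(A)$ holds with $\sigma=\alpha$ nontrivial. Your proposed mechanism for Step 3 also fails on its own terms: the twisted commutation $U_{g}U_{h}=\lambda U_{h}U_{g}$ in $D$ packages as $x_{g}y_{h}-\lambda y_{h}x_{g}$, but this is not an identity of an elementary $M_{\bar{t}}(\KK)$ for any $\lambda$ (e.g.\ $E_{12}E_{23}=E_{13}$ while $E_{23}E_{12}=0$), so no contradiction can be extracted from it. The essential idea you are missing is that the untwisted $B$ can absorb the twist of $A$ at the cost of an extra tensor factor $M_{\bar{H}}(\KK)$; the correct target condition is not ``$\sigma$ trivial'' but rather $g\bar{t}\succsim\bar{H}\times\tilde{s}$ for some $g\in G$ and $\tilde{s}\sim_{H}\bar{s}$.

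Your Step 4 is also underpowered: the inequalities $\dim A_{g}\leq\dim B_{g}$ obtained from graded Capelli polynomials do not determine the elementary tuple up to translation and reordering, since $\dim B_{g}$ is a convolution-type count over the tuple and distinct tuples can share these counts. The paper instead extracts the tuple condition by a sharper device: for each right $H$-coset representative $s_{i}$ (with multiplicity $r_{i}$ in $\bar{s}$) and each $h\in H$, it forms $f_{i,h}=x_{s_{1}^{-1}hs_{i}}\cdot St_{2r_{i}-1}(z_{e,1},\dots,z_{e,2r_{i}-1})\cdot y_{(s_{1}^{-1}hs_{i})^{-1}}$ and multiplies all of these together; non-vanishing of the product on $A$ (via Amitsur--Levitzki and simplicity of the diagonal blocks) forces non-vanishing on $B$, which pins down, injectively in $(i,h)$, an entry of $\bar{t}$ in the coset $t_{1}s_{1}^{-1}hs_{i}$ with multiplicity at least $r_{i}$. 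This yields $\bar{t}\succsim t_{1}s_{1}^{-1}(\bar{H}\times\bar{s})$, and the embedding is then built blockwise from the twisted regular representation described above. You would need to replace Steps 3 and 4 wholesale by an argument of this kind.
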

Let $\KK$ be any field with $char(\KK)=0$ and $\overline{\KK}$
its algebraic closure. As we mentioned before, a trivial reason for
two graded algebras $A,B$ over $\KK$ to have the same graded identities
is that $A\otimes_{\KK}\overline{\KK}\cong_{G}B\otimes_{\KK}\overline{\KK}$,
which means that $B$ is a twisted form of $A$ in the language of
Galois descent. Let $A$ be an algebra over $\KK$ and suppose that
$\bar{A}=A\otimes_{\KK}\bar{\KK}$ is $G$-simple over $\overline{\KK}$
and we want to build a generic algebra $A_{gen}$ that specialize
exactly to all the twisted forms of $A$. If $R_{1}$ is a graded
algebra and $R_{2}$ is a homomorphic image of $R_{1}$ then clearly
$Id_{G}(R_{1})\subseteq Id_{G}(R_{2})$, so in our case we will have
$Id_{G}(A_{gen})\subseteq Id_{G}(B)$ for any homomorphic image $B$
of $A_{gen}$, and in particular $Id_{G}(A_{gen})\subseteq Id_{G}(A)$. 

With this in mind, let $\KK\left\langle X_{G}\right\rangle $ be the
free algebra with noncommutative indeterminates $\left\{ x_{g,i}\right\} _{g\in G,i\in\NN}$
and let $\tilde{A}=\nicefrac{\KK\left\langle X_{G}\right\rangle }{Id_{G}(A)}$,
then in particular we have $Id_{G}(A)=Id_{G}(\tilde{A})$. It is clear
that any finitely generated algebra $B$ with $Id_{G}(A)\subseteq Id_{G}(B)$
is a homomorphic image of $\tilde{A}$, and in particular any twisted
form of $A$. The problem is that $\tilde{A}$ can have other homomorphic
images that are not twisted forms (for example the trivial algebra
$\left\{ 0\right\} $). Let $B$ be a specialization of $\tilde{A}$,
set $\bar{B}=B\otimes_{\KK}\bar{\KK}$ and assume that $\bar{B}$
is finite dimensional simple, then the theorem above shows that though
we don't necessarily have $\bar{B}\cong_{G}\overline{A}$, we do know
that $\bar{B}\hookrightarrow_{G}\overline{A}$. This shows that a
good place to start to look for $A_{gen}$ is with the relative free
algebra over $Id_{G}(A)$. Such constructions were already carried
for twisted group algebras in \cite{aljadeff_graded_2010}(definition
inside the paper).\\

In the second part of theorem 1, we see that if $A$ and $B$ are
$G$-semisimple, then $Id_{G}(B)\subseteq Id_{G}(A)$ doesn't have
to come from graded embedding $A\hookrightarrow_{G}B$. The situation
becomes much simpler when we consider only equality of the ideals,
and combine it with the results from \cite{koshlukov_identities_2010,aljadeff_simple_2011}
about isomorphisms of $G$-simple algebras. 

The next lemma was proved by Berele in \cite{berele_classification_1993}
for the case of verbally prime algebras, and we give here an easy
proof for the case of $G$-simple algebras. We say that a set $\left\{ A_{i}\right\} _{1}^{n}$
of graded algebras is minimal if for all $j$ we have $\bigcap_{k}Id_{G}(A_{k})\varsubsetneqq\bigcap_{k\neq j}Id_{G}(A_{k})$. 
\begin{lem}
Let $\KK$ be an arbitrary field. Let $\left\{ A_{i}\right\} _{1}^{n},\left\{ B_{i}\right\} _{1}^{m}$
be two minimal sets of $G$-simple algebras. If $A=\bigoplus_{1}^{n}A_{i}$
, $B=\bigoplus_{1}^{m}B_{j}$ and $Id_{G}(A)=Id_{G}(B)$ then $m=n$,
and there is a permutation $\tau:\left[n\right]\rightarrow\left[n\right]$
such that $Id_{G}(A_{i})=Id_{G}(B_{\tau(i)})$.
\end{lem}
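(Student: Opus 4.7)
The plan is to reduce the entire statement to proving the one-sided inclusion claim: for every $i$ there exists some $j$ with $Id_G(B_j)\subseteq Id_G(A_i)$, and symmetrically for every $j$ some $i$ with $Id_G(A_i)\subseteq Id_G(B_j)$. Once these are in hand, minimality will force the corresponding index maps to be mutually inverse, yielding $n=m$ together with a permutation $\tau$ along which the identities coincide.

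The key step is the containment claim, which I would prove by contradiction. Assume that for some $i_0$ we have $Id_G(B_j)\not\subseteq Id_G(A_{i_0})$ for every $j$, and for each $j$ pick a multilinear $G$-homogeneous polynomial $f_j\in Id_G(B_j)\setminus Id_G(A_{i_0})$ (using $\mathrm{char}(\KK)=0$ to reduce to multilinear ones and then passing to a nonzero homogeneous component). Form
\[
F = f_1(\bar{x}^{(1)})\,y_1\,f_2(\bar{x}^{(2)})\,y_2\cdots y_{m-1}\,f_m(\bar{x}^{(m)}),
\]
with disjoint variable sets and fresh homogeneous indeterminates $y_k$ whose degrees are chosen to make $F$ itself $G$-homogeneous. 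Evaluating any graded assignment on $B=\bigoplus_j B_j$ and projecting onto the $j$-th summand kills the $j$-th factor, so $F\in Id_G(B)$. Conversely, pick homogeneous values $\bar a^{(j)}$ in $A_{i_0}$ with $u_j:=f_j(\bar a^{(j)})\neq 0$, and then use the graded primeness of the $G$-simple algebra $A_{i_0}$ -- for any two nonzero homogeneous $u,v\in A_{i_0}$ there is a homogeneous $z$ with $uzv\neq 0$, since the graded two-sided ideal generated by $u$ is all of $A_{i_0}$ -- to select $y_k$'s in $A_{i_0}$ so that $u_1y_1u_2\cdots y_{m-1}u_m\neq 0$. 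Hence $F\notin Id_G(A_{i_0})$, and since $A_{i_0}$ is a graded subalgebra of $A$ we have $Id_G(A)\subseteq Id_G(A_{i_0})$, so $F\notin Id_G(A)$, contradicting $Id_G(A)=Id_G(B)$.

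With both containment directions in hand, fix index maps $\sigma$ and $\pi$ satisfying $Id_G(B_{\sigma(i)})\subseteq Id_G(A_i)$ and $Id_G(A_{\pi(j)})\subseteq Id_G(B_j)$. Composing yields $Id_G(A_{\pi\sigma(i)})\subseteq Id_G(A_i)$; if $\pi\sigma(i)\neq i$ for some $i$, then $A_i$ could be dropped from $\bigcap_l Id_G(A_l)$ without change, violating the minimality of $\{A_l\}$. So $\pi\sigma=\mathrm{id}$, and symmetrically $\sigma\pi=\mathrm{id}$, giving $n=m$ and the desired permutation $\tau:=\sigma$. The chain $Id_G(A_i)\supseteq Id_G(B_{\tau(i)})\supseteq Id_G(A_{\pi\tau(i)})=Id_G(A_i)$ then forces equality throughout.

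The main obstacle is verifying the second property of $F$, namely $F\notin Id_G(A_{i_0})$. This rests on the graded primeness of a $G$-simple algebra in exactly the form above, and is the ingredient replacing Berele's use of Capelli-type polynomials in the verbally prime case. The remaining bookkeeping -- reducing to multilinear homogeneous $f_j$, selecting degrees for the $y_k$, and invoking minimality -- is routine.
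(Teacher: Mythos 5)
Your proposal is correct and follows essentially the same route as the paper: your containment step is exactly \prettyref{lem:intersection of algebras in simple}, proved by the same product construction $F=f_{1}y_{1}f_{2}\cdots y_{m-1}f_{m}$ resting on the graded primeness of a $G$-simple algebra (\prettyref{lem:product in simple algebras}), and your closing argument with the two index maps $\sigma,\pi$ and minimality is identical to the paper's. One minor remark: the appeal to $char(\KK)=0$ for multilinearization is unnecessary and would clash with the hypothesis that $\KK$ is arbitrary --- homogeneity of the $f_{j}$, obtained by passing to a nonzero homogeneous component (possible since $Id_{G}$ is a graded ideal), is all your argument actually uses.
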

If we also know that $Id_{G}(A_{i})=Id_{G}(B_{j})$ implies that $A_{i}\cong_{G}B_{j}$
then the lemma above shows that $Id_{G}(A)=Id_{G}(B)$ implies that
$A\cong_{G}B$. We now use the results from\cite{koshlukov_identities_2010,aljadeff_simple_2011}
to conclude that:
\begin{cor}
Let $\left\{ A_{i}\right\} _{1}^{n},\left\{ B_{j}\right\} _{1}^{m}$
be minimal sets of $G$-simple finite dimensional algebras and let
$A=\bigoplus_{1}^{n}A_{i}$, $B=\bigoplus_{1}^{m}B_{j}$. Suppose
that
\begin{enumerate}
\item $G$ is abelian and for every finite subgroup $H\leq G$ we have $\left|H\right|^{-1}\in\KK$
\item or $G$ is arbitrary and $char(\KK)=0$
\end{enumerate}

then $Id_{G}(A)=Id_{G}(B)$ iff $A\cong_{G}B$.

\end{cor}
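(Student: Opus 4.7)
The reverse direction is trivial: if $A \cong_{G} B$ as graded algebras then graded identities are preserved under graded isomorphism, so $Id_{G}(A) = Id_{G}(B)$. So the plan is to focus on the forward direction, and the strategy is essentially to string together two results that are already available: the preceding lemma and the cited theorems from \cite{koshlukov_identities_2010,aljadeff_simple_2011}.

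First I would apply the lemma directly to our minimal decompositions. Assuming $Id_{G}(A) = Id_{G}(B)$, the lemma immediately gives $m = n$ and a permutation $\tau \in S_{n}$ such that $Id_{G}(A_{i}) = Id_{G}(B_{\tau(i)})$ for every $i$. At this point each equality is between the graded identities of two finite dimensional $G$-simple algebras over $\KK$, which is precisely the setup handled in the cited works.

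Next, under hypothesis (1), where $G$ is abelian and every finite subgroup of $G$ has invertible order in $\KK$, I would invoke the Koshlukov--Zaicev theorem \cite{koshlukov_identities_2010} to conclude that each equality $Id_{G}(A_{i}) = Id_{G}(B_{\tau(i)})$ upgrades to a graded isomorphism $A_{i} \cong_{G} B_{\tau(i)}$. Under hypothesis (2), where $G$ is arbitrary and $\mathrm{char}(\KK) = 0$, the same conclusion follows from the Aljadeff--Haile theorem \cite{aljadeff_simple_2011}. Taking direct sums of these isomorphisms over $i = 1, \ldots, n$ yields $A = \bigoplus_{i} A_{i} \cong_{G} \bigoplus_{i} B_{\tau(i)} = B$, as desired.

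There is essentially no obstacle here: the corollary is an immediate packaging of the lemma with the two cited identification theorems. The only point requiring mild care is verifying that the hypotheses on $\KK$ and $G$ in the corollary match exactly the hypotheses required by the invoked results, and that the minimality assumption is used in the correct place (namely, it is needed precisely to apply the lemma and obtain the bijective pairing via $\tau$; without it one only gets an inclusion statement between the multisets of identity ideals).
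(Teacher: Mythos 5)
Your proposal is correct and follows exactly the paper's own route: the author likewise derives the corollary by combining the preceding lemma (which yields $m=n$ and the permutation $\tau$ with $Id_{G}(A_{i})=Id_{G}(B_{\tau(i)})$) with the identification theorems of \cite{koshlukov_identities_2010} and \cite{aljadeff_simple_2011} to upgrade each equality of ideals to a graded isomorphism. Nothing further is needed.
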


\newpage{}

\section{Preliminaries}

\subsection{Graded Algebras}

We start with some basic definitions.

Let $A$ be an algebra over a field $\KK$, and let $G$ be a group.
A \emph{$G$-grading} of $A$ is a decomposition of $A$ as a vector
space over $\KK$ to $\bigoplus_{g\in G}A_{g}$, such that $A_{g}\cdot A_{h}\subseteq A_{gh}$
for every $g,h\in G$. A sub-algebra (respectively left, right or
two sided ideal) $B\subseteq A$ is called a \textit{graded sub-algebra}
if $B={\displaystyle \bigoplus_{g\in G}}(B\cap A_{g})$. The algebra
$A$ is called \emph{$G$-simple} if it has no non-trivial two sided
graded ideals and $A\cdot A\neq0$.
\begin{example}
[Twisted group algebra]Let $G$ be a finite group and $\KK$ a field.
A function $\alpha:G\times G\rightarrow\KK^{\times}$ is said to be
a 2-cocycle of $G$ (denoted by $\alpha\in Z^{2}(G,\KK^{\times})$)
if it satisfies
\[
\forall u,v,w\in G\qquad\alpha(u,v)\alpha(uv,w)=\alpha(u,vw)\alpha(v,w)
\]
Let $A=\KK^{\alpha}G$ be the algebra such that as a vector space
it is the direct sum ${\displaystyle \bigoplus_{g\in G}}\KK U_{g}$
and the product is defined by $U_{g}U_{h}=\alpha(g,h)U_{gh}$. The
property of $\alpha$ above shows that $\KK^{\alpha}G$ is an associative
algebra, and setting $A_{g}=\KK U_{g}$ induces a grading on $\KK^{\alpha}G$.
Taking a new basis $V_{g}=\alpha(e,e)^{-1}U_{g}$ we get
\begin{eqnarray*}
\alpha(g,e)\alpha(ge,e) & = & \alpha(g,ee)\alpha(e,e)\quad\Rightarrow\quad\alpha(g,e)=\alpha(e,e)\\
V_{g}V_{e} & = & \alpha(e,e)^{-2}U_{g}U_{e}=\alpha(e,e)^{-2}\alpha(g,e)U_{g}=\alpha(e,e)^{-1}\alpha(g,e)V_{g}=V_{g}
\end{eqnarray*}
and in a similar way $V_{e}V_{g}=V_{g}$ so $V_{e}$ is a unit. Because
$0\neq V_{g}V_{g^{-1}}\in\KK V_{e}$, each graded component $A_{g}$
is spanned by an invertible element, and therefore each nonzero graded
ideal of $A$ must be all $A$, or in other words $\KK^{\alpha}G$
is $G$-simple.

For a tuple $\bar{g}\in G^{n}$ we denote by $\alpha(\bar{g})=\alpha(g_{1},...,g_{n})\in\KK$
the constant such that 
\[
U_{g_{1}}U_{g_{2}}\cdots U_{g_{n}}=\alpha(g_{1},...,g_{n})U_{g_{1}g_{2}\cdots g_{n}}
\]

\end{example}

\begin{example}
[Elementary Grading]Let $\bar{s}\in G^{n}$. We denote by $M_{\bar{s}}(\KK)$
the algebra $A=M_{n}(\KK)$ with the grading 
\[
A_{g}=span\left\{ E_{i,j}\;\mid\;\deg(E_{i,j})=s_{i}^{-1}s_{j}=g\right\} .
\]
$A$ is simple as an algebra so it must also be simple as a graded
algebra.
\end{example}
Let $A,B$ be two $G$-graded algebras. An algebra homomorphism $\varphi:A\rightarrow B$
is called graded if $\varphi(A_{g})\subseteq B_{g}$ for any $g\in G$,
and if this is true then $\ker(\varphi)$ is a graded two sided ideal.
Notice that if $A$ is simple then $\varphi$ is either the zero homomorphism,
or injective.

A theorem by Bahturin, Sehgal and Zaicev \cite{bahturin_finite-dimensional_2008}
gives us the structure of all $G$-simple algebras of finite dimension.
\begin{thm}
Let $R={\displaystyle \bigoplus_{g\in G}}R_{g}$ be a $G$-graded
finite-dimensional algebra over an algebraically closed field $\KK.$
Suppose that $char(\KK)=0$ or $char(\KK)$ is coprime with the order
of each finite subgroup of $G$. Then $R$ is $G$-simple iff $R\cong\KK^{\alpha}H\otimes M_{\bar{s}}(\KK)$
where $H$ is a finite subgroup of $G$, $\alpha\in Z^{2}(H,\KK^{\times})$,
$\bar{s}\in G^{n}$ and the grading is defined by 
\[
R_{g}=span\left\{ U_{h}\otimes E_{i,j}\;\mid\; s_{i}^{-1}hs_{j}=g\right\} 
\]

\end{thm}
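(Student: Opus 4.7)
The ``if'' direction amounts to direct verification. Given $R = \KK^{\alpha}H \otimes M_{\bar s}(\KK)$, I would take a nonzero graded two-sided ideal $I \subseteq R$ and show it equals $R$. Because the basis elements $U_{h}$ are invertible in the twisted group algebra and because the matrix units $E_{i,j}$ generate $M_{n}(\KK)$, any nonzero homogeneous element of $I$ can be reduced, by left and right multiplication by elements of the form $U_{h} \otimes E_{k,l}$, to a single term $U_{h} \otimes E_{i,j}$; such a term then generates all of $R$ as a two-sided graded ideal.

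For the converse, the plan is to prove a graded Wedderburn--Artin structure theorem. Under the characteristic hypothesis the Jacobson radical $J(R)$ is a graded ideal (Bergman's theorem, or a direct dual-group argument when $G$ is abelian), so $G$-simplicity together with $R\cdot R \neq 0$ forces $J(R) = 0$, i.e. $R$ is semisimple as an ungraded algebra. A standard argument with graded minimal left ideals then shows that any $G$-simple graded-Artinian algebra is graded-isomorphic to $\mathrm{End}_{D}(V)^{\mathrm{op}}$ for a finite-dimensional graded division algebra $D$ and a graded free right $D$-module $V$; fixing a homogeneous $D$-basis of $V$ records a tuple $\bar s = (s_{1}, \ldots, s_{n}) \in G^{n}$ of degrees, and under this identification the induced grading on the matrix part is exactly the elementary grading determined by $\bar s$.

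The main remaining step is classifying the graded division algebra $D$. Let $H = \{g \in G : D_{g} \neq 0\}$; since every nonzero homogeneous element of $D$ has a homogeneous inverse living in the inverse-degree component, $H$ is a subgroup of $G$, finite by finite-dimensionality of $D$. The neutral component $D_{e}$ is a finite-dimensional division algebra over the algebraically closed field $\KK$, hence $D_{e} = \KK$; for each $h \in H$, left multiplication by any nonzero $d \in D_{h}$ is a $\KK$-isomorphism $D_{e} \to D_{h}$, so $\dim_{\KK} D_{h} = 1$. Choosing bases $U_{h}$ of $D_{h}$, the products $U_{g}U_{h} = \alpha(g,h) U_{gh}$ define a function $\alpha \colon H \times H \to \KK^{\times}$ which is a $2$-cocycle by associativity, so $D \cong \KK^{\alpha}H$.

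Combining these steps yields $R \cong \KK^{\alpha}H \otimes M_{n}(\KK)$, and the element $U_{h} \otimes E_{i,j}$ ends up in degree $s_{i}^{-1} h s_{j}$, exactly matching the claimed description. The main obstacle I expect is the verification that $J(R)$ is graded for general (not necessarily abelian) $G$, together with the refinement of semisimplicity to a graded Wedderburn decomposition over a graded division algebra; this is where the finite-dimensionality and the characteristic hypothesis are essential, the latter ensuring in particular that the twisted group algebra $\KK^{\alpha}H$ is itself semisimple.
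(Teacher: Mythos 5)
The paper does not actually prove this statement: it is quoted verbatim as the structure theorem of Bahturin, Sehgal and Zaicev from \cite{bahturin_finite-dimensional_2008}, so there is no internal proof to compare against. Your outline is, however, essentially the standard proof of that theorem, and it is sound in its overall architecture. The ``if'' direction works as you say; the one observation worth making explicit is that for fixed $(i,j)$ and fixed total degree $g$ there is at most one $h\in H$ with $s_i^{-1}hs_j=g$, so multiplying a nonzero homogeneous element of an ideal on the left by $U_e\otimes E_{ii}$ and on the right by $U_e\otimes E_{jj}$ already isolates a single term $cU_h\otimes E_{i,j}$, after which invertibility of $U_h$ and the matrix units generate everything. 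For the converse, the graded Wedderburn--Artin step and the identification of the graded division algebra $D$ as $\KK^{\alpha}H$ (support a finite subgroup, $D_e=\KK$ by algebraic closedness, each $D_h$ one-dimensional, associativity giving the cocycle) are all correct and, notably, characteristic-free.

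The one substantive gap is the step you yourself flag: homogeneity of $J(R)$ for an arbitrary grading group $G$. Bergman's theorem covers $\ZZ$-gradings (and, by extension, gradings by torsion-free or ordered abelian groups); the dual-group or averaging argument covers finite subgroups whose order is invertible in $\KK$; but the subgroup generated by the (finite) support of $R$ may be infinite with torsion, and splicing these two cases together is a genuine theorem of Bahturin--Zaicev rather than a routine verification. This is precisely where the coprimality hypothesis is consumed. Your closing remark slightly misattributes its role: in your own outline the hypothesis is not needed to classify $D$ or to make $\KK^{\alpha}H$ behave well (indeed $\KK\ZZ_p$ in characteristic $p$ is a perfectly good graded division algebra of the stated form despite being non-semisimple); everything after ``$J(R)=0$'' goes through in any characteristic over an algebraically closed field. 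So the proposal is a correct skeleton with one load-bearing lemma left as an acknowledged black box.
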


\subsection{Graded Identities}

For a group $G$ we define $X_{g}=\left\{ x_{g,i}\mid i\in\NN\right\} ,\quad X_{G}={\displaystyle \bigcup_{g\in G}X_{g}}$
and $\KK\left\langle X_{G}\right\rangle $ to be the free $\KK$ algebra
generated by the noncommutative indeterminates $X_{G}$. For a monomial
$f={\displaystyle \prod_{j=1}^{k}}x_{g_{j},i_{j}}$we set the degree
$\deg(f)=\prod_{1}^{k}g_{j}$. We define a $G$ grading on $\KK\left\langle X_{G}\right\rangle $
by 
\[
\KK\left\langle X_{G}\right\rangle _{g}=span\left\{ f\in\FF\left\langle X_{G}\right\rangle \;\mid\; f\;\mbox{is a monomial},\;\deg(f)=g\right\} 
\]
A \textit{graded assignment} in $A$ for a polynomial $f\in\KK\left\langle X_{G}\right\rangle $,
is such that each indeterminate $x_{g,i}$ is substituted by an element
from $A_{g}$. The polynomial $f$ is called a \emph{graded identity}
of $A$ if the evaluation of $f$ on any graded assignment is zero,
or in other words $f\mid_{A}=\left\{ 0\right\} $. We define the \textit{ideal
of identities} of $A$ by
\[
Id_{G}(A)=\left\{ f\in\KK\left\langle X_{G}\right\rangle \;\mid\; f\mid_{A}=\left\{ 0\right\} \right\} 
\]
It is easy to check that this is a graded ideal of $\KK\left\langle X_{G}\right\rangle $.
This ideal is also closed under graded endomorphism, so if $f\in Id_{G}(A)$,
then we can substitute an indeterminate $x_{g,i}$ in $f$ by any
polynomial $h\in\KK\left\langle X_{G}\right\rangle $ of degree $g$
and the the result will still be in $Id_{G}(A)$.

A graded ideal in $\KK\left\langle X_{G}\right\rangle $ that is closed
under graded endomorphism is called a \textit{$T$ ideal}.\\

A graded polynomial $f(x_{g_{1},1},...,x_{g_{n},n})$ is called \textit{multilinear}
if it is linear in each of the indeterminates. Every multilinear polynomial
is of the form
\[
\sum_{\sigma\in S_{n}}\lambda_{\sigma}\prod x_{g_{\sigma(i)},\sigma(i)}
\]
If $S$ is some spanning set of homogeneous elements in $A$, then
it is easy to check that a multilinear polynomial $f$ is an identity
of $A$ iff the result of any graded assignment of $S$ in $f$ is
zero. It is well known that for $\KK$ with $char(\KK)=0$, the ideal
$Id_{G}(A)$ is generated as a $T$ ideal by multilinear polynomials,
so in most of the paper we will assume that $char(\KK)=0$ and only
concentrate on multilinear polynomials.

\newpage{}

\section{Graded Embeddings of a Simple Algebra in a Simple Algebra}

In the following section $G$ will be a group, and $A,B$ will be
two finite dimensional $G$-simple algebra over an algebraically closed
field $\KK$ with $char(\KK)=0$.

The question we ask is if the condition $Id_{G}(B)\subseteq Id_{G}(A)$
is enough in order to find a graded embedding $\varphi:A\hookrightarrow B$.
\\
\\
Before we start we give some standard operations on $G$-simple algebras
that return $G$ isomorphic algebras (Lemma 1.3 \cite{aljadeff_simple_2011})
\begin{lem}
Let $H\leq G$ be a finite sub group, $\alpha\in Z^{2}(H,\KK^{\times})$
and $\bar{s}=(s_{1},...,s_{r})\in G^{r}$.
\begin{enumerate}
\item If $\sigma\in S_{r}$ is any permutation and $\bar{s}^{\sigma}\in G^{r}$
is defined by $\bar{s}_{i}^{\sigma}=s_{\sigma(i)}$ then $\KK^{\alpha}H\otimes M_{\bar{s}}(\KK)\cong_{G}\KK^{\alpha}H\otimes M_{\bar{s}^{\sigma}}(\KK)$.
\item Suppose that $Hs_{i}=Ht$. Denote by $\bar{s}'=(s_{1},...,s_{i-1},t,s_{i+1},...,s_{m})$
then $\KK^{\alpha}H\otimes M_{\bar{s}}(K)\cong_{G}\KK^{\alpha}H\otimes M_{\bar{s}'}(K)$.
\item For any $g\in G$ define $g\bar{s}=(gs_{1},...,gs_{n})$, $H_{g}=gHg^{-1}$
and $\alpha_{g}\in Z^{2}(H_{g},\KK^{\times})$ by $\alpha_{g}(gh_{1}g^{-1},gh_{2}g^{-1})=\alpha(h_{1},h_{2})$
then $\KK^{\alpha}H\otimes M_{\bar{s}}(\KK)\cong_{G}\KK^{\alpha_{g}}H_{g}\otimes M_{g\bar{s}}(\KK)$
\end{enumerate}
\end{lem}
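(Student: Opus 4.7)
The plan is to exhibit, in each of the three cases, an explicit graded $\KK$-algebra isomorphism defined on the standard basis $\{U_{h} \otimes E_{i,j}\}$, and then verify both that the map respects the twisted product of $\KK^{\alpha}H$ (invoking the $2$-cocycle relation only where needed) and that it preserves the $G$-degree of each basis element.

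For part (1), I would send $U_{h} \otimes E_{i,j}$ to $U_{h} \otimes E_{\sigma^{-1}(i),\sigma^{-1}(j)}$. On the matrix factor this is conjugation by the permutation matrix $P_{\sigma}$, so it is an algebra isomorphism with no cocycle calculation to do, and the degree check reduces to the one-line identity $(\bar{s}^{\sigma})_{\sigma^{-1}(i)}^{-1}\,h\,(\bar{s}^{\sigma})_{\sigma^{-1}(j)} = s_{i}^{-1}\,h\,s_{j}$. For part (3), the natural candidate is $U_{h} \otimes E_{i,j} \mapsto V_{ghg^{-1}} \otimes E_{i,j}$, where $V$ denotes the standard basis of $\KK^{\alpha_{g}}H_{g}$; the defining relation $\alpha_{g}(gh_{1}g^{-1},gh_{2}g^{-1}) = \alpha(h_{1},h_{2})$ makes it automatically multiplicative, and the degree check collapses to the one-liner $(gs_{i})^{-1}(ghg^{-1})(gs_{j}) = s_{i}^{-1}\,h\,s_{j}$.

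Part (2) carries the real content. Writing $t = h_{0}s_{i}$ for some $h_{0} \in H$ (which exists by $Hs_{i} = Ht$), my strategy is to realize the isomorphism as an inner conjugation. Set $D = \sum_{j \neq i} 1 \otimes E_{j,j} + U_{h_{0}} \otimes E_{i,i}$, which is invertible because $U_{h_{0}}$ is a unit in $\KK^{\alpha}H$. The two algebras $\KK^{\alpha}H \otimes M_{\bar{s}}(\KK)$ and $\KK^{\alpha}H \otimes M_{\bar{s}'}(\KK)$ share the same underlying $\KK$-algebra but carry different $G$-gradings, so conjugation $x \mapsto DxD^{-1}$ is tautologically an algebra automorphism; the claim is that it sends the $\bar{s}$-grading onto the $\bar{s}'$-grading. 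Unwinding conjugation on the basis, $U_{h} \otimes E_{j,k}$ maps to $U_{h} \otimes E_{j,k}$, $U_{h_{0}}U_{h} \otimes E_{i,k}$, $U_{h}U_{h_{0}}^{-1} \otimes E_{j,i}$, or $U_{h_{0}}U_{h}U_{h_{0}}^{-1} \otimes E_{i,i}$ according to whether neither, only $j$, only $k$, or both of $j,k$ equal $i$.

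The main obstacle is the four-case degree check in part (2): one must confirm in each configuration that the output sits in the $\bar{s}'$-component of the same degree that the input occupied in the $\bar{s}$-grading, which comes down to tracking how the $h_{0}$-factors picked up from conjugating $U_{h}$ cancel against the replacement of $s_{i}$ by $t = h_{0}s_{i}$. Since conjugation is automatically multiplicative, no explicit cocycle manipulation is needed beyond rewriting $U_{h_{0}}^{-1}$ as a scalar multiple of $U_{h_{0}^{-1}}$; all three parts then together yield the claimed list of $G$-isomorphisms.
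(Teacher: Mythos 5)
Your proposal is correct and follows essentially the same route as the paper: parts (1) and (3) use exactly the maps the paper writes down, and your conjugation by $D=\sum_{j\neq i}1\otimes E_{j,j}+U_{h_{0}}\otimes E_{i,i}$ in part (2) is, up to the cocycle normalization $U_{h_{0}}^{-1}=\alpha(h_{0},h_{0}^{-1})^{-1}\alpha(e,e)^{-1}U_{h_{0}^{-1}}$, precisely the four-case formula the paper gives (with $\tilde{h}=h_{0}^{-1}$). Packaging it as an inner automorphism is a slightly cleaner way to get multiplicativity for free, leaving only the degree check you describe.
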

\begin{proof}
.
\begin{enumerate}
\item Use the graded isomorphism $U_{h}\otimes E_{i,j}\mapsto U_{h}\otimes E_{\sigma^{-1}(i),\sigma^{-1}(j)}$.
\item Denote by $\tilde{h}\in H$ the element such that $s_{i}=\tilde{h}t$
and then the isomorphism $\varphi:K^{\beta}H\otimes M_{\bar{s}}(K)\rightarrow K^{\beta}H\otimes M_{\bar{s}'}(K)$
is defined by 
\[
\varphi\left(U_{h}E_{j,k}\right)=\begin{cases}
U_{h}E_{j,k} & j,k\neq i\\
\beta(\tilde{h},\tilde{h}^{-1})^{-1}U_{\tilde{h}^{-1}}U_{h}E_{i,k} & j=i,\; k\neq i\\
U_{h}U_{\tilde{h}}E_{j,i} & j\neq i,\; k=i\\
\beta(\tilde{h},\tilde{h}^{-1})^{-1}U_{\tilde{h}^{-1}}U_{h}U_{\tilde{h}}E_{i,i}\qquad & j,k=i
\end{cases}
\]

\item Use the graded isomorphism $\varphi:U_{h}\otimes E_{i,j}\mapsto U_{ghg^{-1}}\otimes E_{i,j}$.
\end{enumerate}
\end{proof}
Parts $1$ and $2$ of the lemma above shows that we can change the
tuple $\bar{s}$ by permuting its elements and by changing elements
in the same right $H$ coset, and it doesn't change the algebra.
\begin{defn}
Let $H\leq G$, and $\bar{s},\bar{s}'\in G^{r}$, then we say that
$\bar{s}$ and $\bar{s}'$ are equivalent modulo $H$ (or just equivalent)
and write $\bar{s}\sim_{H}\bar{s}'$ if we can get $\bar{s}'$ from
$\bar{s}$ by a sequence of permutations, and multiplication from
the left of individual components by elements of $H$. If $\bar{s}$
has a sub tuple that is equivalent to $\bar{s}'$ then we write $\bar{s}\succsim_{H}\bar{s}'$.

Write $\bar{s}\sim\bar{s}'$ (respectively $\bar{s}\succsim\bar{s}'$)
for $\bar{s}\sim_{\left\{ e\right\} }\bar{s}'$ (respectively $\bar{s}\succsim_{\left\{ e\right\} }\bar{s}'$).
\end{defn}
The lemma above shows that if $\bar{s}\sim_{H}\bar{s}'$ and $\alpha$
is any $2$-cocycle of $H$ then $\KK^{\alpha}H\otimes M_{\bar{s}}(\KK)\cong_{G}\KK^{\alpha}H\otimes M_{\bar{s}'}(\KK)$.
If we only have $\bar{s}\succsim_{H}\bar{s}'$ then we get only a
graded embedding $\KK^{\alpha}H\otimes M_{\bar{s}'}(\KK)\hookrightarrow\KK^{\alpha}H\otimes M_{\bar{s}}(\KK)$.

Let $\bar{s}\in G^{r_{1}}$ and $\bar{t}\in G^{r_{2}}$ be two tuples
and define the graded algebra $\KK^{\alpha}H\otimes M_{\bar{s}}(\KK)\otimes M_{\bar{t}}(\KK)$
to be the algebra $\KK^{\alpha}H\otimes M_{r_{1}}(\KK)\otimes M_{r_{2}}(\KK)$
with grading
\[
\deg(U_{h}\otimes E_{i_{1},j_{1}}\otimes E_{i_{2},j_{2}})=t_{i_{2}}^{-1}s_{i_{1}}^{-1}hs_{j_{1}}t_{j_{2}}
\]
Notice that if $G$ is not abelian then we usually don't have $\KK^{\alpha}H\otimes M_{\bar{s}}(\KK)\otimes M_{\bar{t}}(\KK)\cong_{G}\KK^{\alpha}H\otimes M_{\bar{t}}(\KK)\otimes M_{\bar{s}}(\KK)$
with the particular important exception that $\bar{s}=(e,e,..,e)$.
For these two tuples define $\bar{s}\times\bar{t}\in G^{r_{1}r_{2}}$
to be the tuple such that $(\bar{s}\times\bar{t})_{(i,j)}=s_{i}t_{j}$
(up to a permutation) then
\[
\KK^{\alpha}H\otimes M_{\bar{s}\times\bar{t}}(\KK)\cong_{G}\KK^{\alpha}H\otimes M_{\bar{s}}(\KK)\otimes M_{\bar{t}}(\KK)
\]
It is easy to see that if $\bar{s}\sim_{H}\bar{s}'$ then $\bar{s}\times\bar{t}\sim_{H}\bar{s}'\times\bar{t}$.
If $H$ is normal in $G$ then for each $h\in H,\; s,t\in G$ we have
$shs^{-1}\in H$ and $(shs^{-1})st=s(ht)$ , and therefore if $\bar{t}\sim_{H}\bar{t}'$
then $\bar{s}\times\bar{t}\sim_{H}\bar{s}\times\bar{t}'$.

For $\bar{u}\in G^{n},\;\bar{v}\in G^{m}$ define $(\bar{u},\bar{v})=\bar{u}+\bar{v}=(u_{1},...,u_{n},v_{1},...,v_{m})$
then 
\[
(\bar{u}+\bar{v})\times\bar{t}=\bar{u}\times\bar{t}+\bar{v}\times\bar{t}\qquad;\qquad\bar{t}\times(\bar{u}+\bar{v})=\bar{t}\times\bar{u}+\bar{t}\times\bar{v}
\]

For $d\in\NN$ we define $\bar{d}$ to be $\bar{d}:=(\overbrace{e,...,e}^{d\; times})$.

\subsection*{$G$-Envelope}

\global\long\def\dotimes{\widehat{\otimes}}

We now describe a second operation on graded algebras that will be
useful in the proof.

Let $E=E_{0}\oplus E_{1}$ be the infinite Grassmann algebra, and
$A=A_{0}\oplus A_{1}$ be a $\ZZ_{2}$-graded algebra. Denote by $E\dotimes A$
the Grassmann envelope of $A$, with the $\ZZ_{2}$ grading defined
by $\left(E\dotimes A\right)_{0}=E_{0}\otimes A_{0}$ and $\left(E\dotimes A\right)_{1}=E_{1}\otimes A_{1}$.
This is a very important operation in general, and in particular in
PI-theory. One of the main reasons for its importance is that if $B$
is another $\ZZ_{2}$ graded algebra then $Id_{\ZZ_{2}}(A)=Id_{\ZZ_{2}}(B)$
iff $Id_{\ZZ_{2}}(E\dotimes A)=Id_{\ZZ_{2}}(E\dotimes B)$. We now
extend this envelope operation to general groups.
\begin{defn}
[G-envelope]Let $A,B$ be two $G$-graded algebras. We denote by
$A\dotimes B$ to be the $G$-graded algebra defined by $\left(A\dotimes B\right)_{g}=A_{g}\otimes B_{g}$.
\end{defn}
Let $A=\KK^{\alpha}G$ be some twisted group algebra and $B$ a $G$-graded
algebra and denote $B^{\alpha}=A\dotimes B$ and call this algebra
the $\alpha$ envelope of $B$. We claim that this operation enjoys
the same properties as the Grassmann envelope. 

Let $\bar{g}=(g_{1},...,g_{n})\in G^{n}$, and recall that $\alpha(\bar{g})$
is the scalar such that $\prod U_{g_{i}}=\alpha(\bar{g})U_{\prod g_{i}}$.
For $\sigma\in S_{n}$ denote $\bar{g}^{\sigma}=(g_{\sigma(1)},...,g_{\sigma(n)})$.
For a tuple $\bar{g}\in G^{n}$, $g\in G$, define $S_{n}^{\bar{g},g}=\left\{ \sigma\in S_{n}\;\mid\;\prod g_{\sigma(i)}=g\right\} $,
then any multilinear polynomial in the indeterminates $\left(x_{g_{i},i}\right)_{1}^{n}$
which is homogeneous of degree $g$ is of the form $f(x_{g_{1},1},...,x_{g_{n},n})={\displaystyle \sum_{\sigma\in S_{n}^{\bar{g},g}}}\lambda_{\sigma}\prod x_{g_{\sigma(i)},\sigma(i)}$.
Notice that if $G$ is abelian then $S_{n}^{\bar{g},g}=S_{n}$ whenever
$g=\prod g_{i}$, and is empty otherwise. The polynomial $f$ is an
identity of $B^{\alpha}$ iff for all $b_{i}\in B_{g_{i}}$ we have
\begin{eqnarray*}
0 & = & \sum_{\sigma}\lambda_{\sigma}\prod_{i}\left(U_{g_{\sigma(i)}}\otimes b_{i}\right)=U_{g}\otimes\sum_{\sigma}\lambda_{\sigma}\alpha(\bar{g}^{\sigma})\prod_{i}b_{\sigma(i)}\\
 & \iff & 0=\sum_{\sigma}\lambda_{\sigma}\alpha(\bar{g}^{\sigma})\prod_{i}b_{\sigma(i)}
\end{eqnarray*}
Set $f^{\alpha}=\sum_{\sigma}\lambda_{\sigma}\alpha(\bar{g}^{\sigma})\prod_{i}x_{\sigma(i)}$
then we just showed that $f\in Id_{G}(B^{\alpha})$ iff $f^{\alpha}\in Id_{G}(B)$. 
\begin{lem}
\label{lem:cocycle envelope}Let $B_{1},B_{2}$ be two $G$-graded
algebra and $\alpha\in Z^{2}(G,\KK^{\times})$ then
\begin{enumerate}
\item $\left(B^{\alpha}\right)^{\alpha^{-1}}\cong_{G}B$.
\item There is a graded embedding $B_{1}\hookrightarrow_{G}B_{2}$ iff there
is a graded embedding $B_{1}^{\alpha}\hookrightarrow_{G}B_{2}^{\alpha}$.
\item There is an inclusion $Id_{G}(B_{1})\subseteq Id_{G}(B_{2})$ iff
$Id_{G}(B_{1}^{\alpha})\subseteq Id_{G}(B_{2}^{\alpha})$.
\end{enumerate}
\end{lem}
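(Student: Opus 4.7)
The plan is to prove (1) and (2) by direct construction and to deduce (3) from the correspondence $f\leftrightarrow f^{\alpha}$ already established in the paragraph preceding the lemma.

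For part (1), I would unpack $(B^{\alpha})^{\alpha^{-1}}=\KK^{\alpha^{-1}}G\dotimes(\KK^{\alpha}G\dotimes B)$, whose degree $g$ component is spanned by simple tensors $V_{g}\otimes U_{g}\otimes b$ with $b\in B_{g}$. The two cocycles cancel in the product:
\[
(V_{g_{1}}\otimes U_{g_{1}}\otimes b_{1})(V_{g_{2}}\otimes U_{g_{2}}\otimes b_{2})=\alpha^{-1}(g_{1},g_{2})\alpha(g_{1},g_{2})\,V_{g_{1}g_{2}}\otimes U_{g_{1}g_{2}}\otimes b_{1}b_{2},
\]
so the assignment $V_{g}\otimes U_{g}\otimes b\mapsto b$ is a graded algebra isomorphism onto $B$. (Implicit is that the pointwise inverse $\alpha^{-1}$ of a $2$-cocycle is again a $2$-cocycle, which is immediate from the defining identity.)

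For part (2), given a graded embedding $\varphi\colon B_{1}\hookrightarrow_{G}B_{2}$, I would define $\tilde{\varphi}\colon B_{1}^{\alpha}\to B_{2}^{\alpha}$ by $U_{g}\otimes b\mapsto U_{g}\otimes\varphi(b)$; this is visibly graded and injective, and it respects multiplication because the cocycle factor $\alpha(g_{1},g_{2})$ appears identically on both sides. The converse uses this construction with $\alpha^{-1}$ in place of $\alpha$, combined with the identifications from part (1): $B_{1}\cong_{G}(B_{1}^{\alpha})^{\alpha^{-1}}\hookrightarrow_{G}(B_{2}^{\alpha})^{\alpha^{-1}}\cong_{G}B_{2}$.

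For part (3), the paragraph preceding the lemma shows that a multilinear polynomial $f$ of multidegree $\bar{g}$ is in $Id_{G}(B^{\alpha})$ iff the twisted polynomial $f^{\alpha}$, obtained by replacing each coefficient $\lambda_{\sigma}$ by $\lambda_{\sigma}\alpha(\bar{g}^{\sigma})$, lies in $Id_{G}(B)$. Since $\alpha$ takes values in $\KK^{\times}$, the map $f\mapsto f^{\alpha}$ is a linear bijection on multilinear polynomials of any fixed multidegree. Hence $Id_{G}(B_{1})\subseteq Id_{G}(B_{2})$ transfers to $Id_{G}(B_{1}^{\alpha})\subseteq Id_{G}(B_{2}^{\alpha})$ on multilinear polynomials, and this extends to all polynomials because in characteristic zero every $T$-ideal is generated by its multilinear part. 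The converse uses the same observation with $\alpha^{-1}$ together with part (1). The only point demanding care is checking bijectivity of $f\mapsto f^{\alpha}$ in the non-abelian setting, where a polynomial of pure degree $g$ is supported on $S_{n}^{\bar{g},g}\subseteq S_{n}$ rather than all of $S_{n}$; but since the twist only rescales each individual term without mixing permutations, the map preserves the support and hence the multidegree, and the whole lemma reduces to routine cocycle bookkeeping with no substantive obstacle.
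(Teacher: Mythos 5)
Your proposal is correct and follows essentially the same route as the paper: part (1) via the map $V_{g}\otimes U_{g}\otimes b\mapsto b$ with the cocycles cancelling, part (2) via $U_{g}\otimes b\mapsto U_{g}\otimes\varphi(b)$ with the converse obtained from part (1), and part (3) via the correspondence $f\leftrightarrow f^{\alpha}$ established just before the lemma. The only difference is that you spell out the reduction to multilinear polynomials and the bijectivity of $f\mapsto f^{\alpha}$, which the paper leaves implicit.
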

\begin{proof}
.
\begin{enumerate}
\item Write $\KK^{\alpha}G=\bigoplus\KK U_{g}$ and $\KK^{\alpha^{-1}}G=\bigoplus\KK V_{g}$,
then $\psi:\left(B^{\alpha}\right)^{\alpha^{-1}}\rightarrow B$ defined
by $\psi(V_{g}\otimes U_{g}\otimes b)=b$ for $b\in B_{g}$ is a graded
isomorphism.
\item If $\varphi:B_{1}\rightarrow B_{2}$ then $\varphi^{\alpha}:B_{1}^{\alpha}\rightarrow B_{2}^{\alpha}$
defined by $\varphi^{\alpha}(U_{g}\otimes b)=U_{g}\otimes\varphi(b)$
for $b\in B_{g}$ is a graded embedding. The other direction is proved
using part (1).
\item Assume that $f\in Id_{G}(B_{1}^{\alpha})$ then $f^{\alpha}\in Id_{G}(B_{1})\subseteq Id_{G}(B_{2})$
so $f\in Id_{G}(B_{2}^{\alpha})$.
\end{enumerate}
\end{proof}
A standard proof shows that if $B$ is $G$-simple then $B^{\alpha}$
is also $G$-simple. In the case where $B$ is finite dimensional
we know exactly what is $B^{\alpha}$.
\begin{thm}
\label{thm:Envelope simple}Let $B=\KK^{\beta}H\otimes M_{\bar{s}}(\KK)$
then $B^{\alpha}\cong_{G}\KK^{\beta\cdot\alpha}H\otimes M_{\bar{s}}(\KK)$.\end{thm}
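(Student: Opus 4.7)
My plan is to construct an explicit graded algebra isomorphism $\psi \colon B^{\alpha} \to \KK^{\beta\alpha}H \otimes M_{\bar{s}}(\KK)$ as a basis bijection combined with a scalar rescaling coming from the cocycle $\alpha$.

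Let $\{V_h\}_{h\in H}$, $\{U_g\}_{g\in G}$, $\{W_h\}_{h\in H}$ be the standard bases of $\KK^{\beta}H$, $\KK^{\alpha}G$, and $\KK^{\beta\alpha}H$ respectively. By the definition of the envelope, $B^{\alpha}=(\KK^{\alpha}G)\dotimes B$ has basis $\{U_g \otimes V_h \otimes E_{i,j} : g=s_i^{-1}h s_j\}$, indexed by triples $(h,i,j)\in H\times[n]\times[n]$. The target $C := \KK^{\beta\alpha}H\otimes M_{\bar{s}}(\KK)$ has basis $\{W_h\otimes E_{i,j}\}$ indexed by the same set, and in both algebras the $(h,i,j)$-th basis element has degree $s_i^{-1}hs_j$. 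Consequently any bijection which only rescales basis vectors will automatically respect the $G$-grading, and I would define
\[
\psi(U_g \otimes V_h \otimes E_{i,j}) \;=\; \lambda(h,i,j)\, W_h\otimes E_{i,j}
\]
for a nonzero scalar $\lambda(h,i,j)\in\KK^{\times}$ to be determined.

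Computing the product of two such basis vectors with matching middle indices, the condition $\psi(xy)=\psi(x)\psi(y)$ is straightforwardly seen to reduce to the single scalar identity
\[
\alpha(g_1,g_2)\, \lambda(h_1h_2, i, l) \;=\; \alpha(h_1,h_2)\, \lambda(h_1,i,j)\, \lambda(h_2,j,l),
\]
where $g_1 = s_i^{-1}h_1 s_j$ and $g_2 = s_j^{-1}h_2 s_l$. After replacing $\alpha$ by a cohomologous normalized cocycle (so that $\alpha(e,\cdot)=\alpha(\cdot,e)=1$, which does not affect $\KK^{\alpha}G$ as a graded algebra), the natural candidate is $\lambda(h,i,j) = \alpha(s_j,s_j^{-1})\,\gamma(h,i,j)^{-1}$, where $\gamma(h,i,j)\in\KK^{\times}$ is defined by $U_{s_i^{-1}}U_h U_{s_j}=\gamma(h,i,j)\,U_{s_i^{-1}hs_j}$. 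Verifying the identity above is then just a matter of evaluating the six-fold product $U_{s_i^{-1}}U_{h_1}U_{s_j}U_{s_j^{-1}}U_{h_2}U_{s_l}$ in $\KK^{\alpha}G$ in two different ways: first by the grouping $(U_{s_i^{-1}}U_{h_1}U_{s_j})(U_{s_j^{-1}}U_{h_2}U_{s_l})$, which produces the factor $\alpha(g_1,g_2)\gamma(h_1,i,j)\gamma(h_2,j,l)$, and second by collapsing $U_{s_j}U_{s_j^{-1}}=\alpha(s_j,s_j^{-1})U_e$ and regrouping the remaining product as $U_{s_i^{-1}}(U_{h_1}U_{h_2})U_{s_l}$, which produces $\alpha(s_j,s_j^{-1})\alpha(h_1,h_2)\gamma(h_1h_2,i,l)$. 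Associativity in $\KK^{\alpha}G$ forces these two coefficients of $U_{s_i^{-1}h_1h_2s_l}$ to agree, and this equality is exactly what is needed.

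The main obstacle is identifying the correct scalar $\lambda$: the obvious choice $\lambda=\gamma^{-1}$ almost works but leaves a residual factor $\alpha(s_j,s_j^{-1})$ that depends on the middle index $j$ of the matrix-multiplication step, and which must be absorbed into $\lambda$ via the extra factor $\alpha(s_j,s_j^{-1})$. Once this normalization is in place, bijectivity and the grading-preservation of $\psi$ are immediate and the multiplicativity check reduces, as explained, to a single application of associativity in $\KK^{\alpha}G$.
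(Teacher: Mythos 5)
Your proposal is correct and follows essentially the same route as the paper: both define a graded isomorphism by rescaling the basis elements $U_{s_i^{-1}hs_j}\otimes V_h\otimes E_{i,j}\mapsto \lambda(h,i,j)\,W_h\otimes E_{i,j}$ with a correction factor built from $\alpha(s_i^{-1},h,s_j)$, the paper choosing the symmetric normalization $\sqrt{\alpha(s_i^{-1},s_i)\alpha(s_j^{-1},s_j)}/\alpha(s_i^{-1},h,s_j)$ where you choose $\alpha(s_j,s_j^{-1})/\alpha(s_i^{-1},h,s_j)$ (both satisfy the same scalar identity, which follows from associativity in $\KK^{\alpha}G$ exactly as you argue). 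You in fact supply the multiplicativity verification that the paper explicitly leaves to the reader, so your write-up is, if anything, more complete.
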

\begin{proof}
Let $\KK^{\alpha}G=\bigoplus\KK U_{g}$ , $\KK^{\beta}H=\bigoplus\KK V_{h}$
and $\KK^{\beta\cdot\alpha}H=\bigoplus\KK W_{h}$. We define $\psi:B^{\alpha}\rightarrow\KK^{\beta\cdot\alpha}G\otimes M_{\bar{s}}(\KK)$
by 
\[
\psi(U_{s_{i}^{-1}gs_{j}}\otimes V_{g}\otimes E_{i,j})=W_{g}\otimes\frac{\sqrt{\alpha(s_{i}^{-1},s_{i})\alpha(s_{j}^{-1},s_{j})}}{\alpha(s_{i}^{-1},g,s_{j})}E_{i,j}
\]
We leave the interested reader to show that this map multiplicative.
\end{proof}
Let $E$ be a trivially graded algebra then the algebra $\KK^{\alpha}G\otimes E$
has a natural $G$-grading. Similar to the $G$-envelope, one can
show a connection between $E$ and graded algebra $\KK^{\alpha}G\otimes E$.
\begin{lem}
\label{lem:cocycle envelope-ungraded}Let $E$ be trivially graded
algebras, and $\alpha\in Z^{2}(G,\KK^{\times})$. Let $\bar{g}=\left(g_{1},...,g_{n}\right)\in G^{n}$
, $g\in G$ then $f(x_{g_{1}},...,x_{g_{n}})={\displaystyle \sum_{\sigma\in S_{n}^{\bar{g},g}}}\lambda_{\sigma}\prod x_{g_{\sigma(i)},\sigma(i)}$
is a graded identity of $\KK^{\alpha}G\otimes E$ iff 
\[
f^{\bar{g},\alpha}(x_{1},...,x_{n}):=\sum_{\sigma\in S_{n}^{\bar{g},g}}\lambda_{\sigma}\alpha(\bar{g}^{\sigma})\prod x_{\sigma(i)}\in Id(E)
\]
\end{lem}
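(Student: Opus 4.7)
The plan is to imitate the computation that appeared just before Lemma~\ref{lem:cocycle envelope}, adapted to the present setting in which $E$ carries the trivial $G$-grading. Since $f$ is multilinear and of graded degree $g$, it is an identity of $\KK^{\alpha}G\otimes E$ iff it vanishes on every substitution from a homogeneous spanning set of each relevant graded component. Because $E$ sits entirely in degree $e$, the degree-$g_i$ component of $\KK^{\alpha}G\otimes E$ is exactly $\KK U_{g_i}\otimes E$, which is spanned by the vectors $U_{g_i}\otimes e_i$ with $e_i\in E$.

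Perform the substitution $x_{g_i,i}\mapsto U_{g_i}\otimes e_i$ and evaluate a single monomial term using the cocycle relation $\prod_i U_{g_{\sigma(i)}}=\alpha(\bar{g}^{\sigma})\,U_{g_{\sigma(1)}\cdots g_{\sigma(n)}}$. For $\sigma\in S_{n}^{\bar{g},g}$ the product $g_{\sigma(1)}\cdots g_{\sigma(n)}$ equals $g$, so summing over $\sigma$ and pulling the common factor $U_g$ out of the tensor yields
\[
f\bigl(U_{g_1}\otimes e_1,\ldots,U_{g_n}\otimes e_n\bigr)=U_g\otimes\sum_{\sigma\in S_{n}^{\bar{g},g}}\lambda_{\sigma}\alpha(\bar{g}^{\sigma})\prod_i e_{\sigma(i)}=U_g\otimes f^{\bar{g},\alpha}(e_1,\ldots,e_n).
\]

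Finally, because $U_g$ is a nonzero basis vector of $\KK^{\alpha}G$, the map $y\mapsto U_g\otimes y$ is an injective linear map $E\hookrightarrow\KK^{\alpha}G\otimes E$. Hence the displayed element vanishes for every choice of $e_1,\ldots,e_n\in E$ iff $f^{\bar{g},\alpha}(e_1,\ldots,e_n)=0$ for every such choice, i.e.\ iff $f^{\bar{g},\alpha}\in Id(E)$. Combined with the multilinearity reduction this is exactly the claim. I do not foresee any real obstacle: the content is essentially bookkeeping with the 2-cocycle, entirely parallel to the computation immediately preceding Lemma~\ref{lem:cocycle envelope}, with the only change being that the nontrivial grading of the second tensor factor has been replaced by the trivial one, so the permuted product $\prod_i e_{\sigma(i)}$ is no longer accompanied by a second cocycle scalar.
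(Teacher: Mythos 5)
Your proposal is correct and is exactly the argument the paper intends: its proof of this lemma is just the remark ``similar to the proof with $\alpha$ envelope,'' and you have carried out that adaptation faithfully -- reduce by multilinearity to substitutions $x_{g_i,i}\mapsto U_{g_i}\otimes e_i$, collect the cocycle scalar $\alpha(\bar g^{\sigma})$ from $\prod_i U_{g_{\sigma(i)}}=\alpha(\bar g^{\sigma})U_g$, and use injectivity of $y\mapsto U_g\otimes y$ to pass between vanishing in $\KK^{\alpha}G\otimes E$ and vanishing of $f^{\bar g,\alpha}$ in $E$. No gaps.
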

\begin{proof}
Similar to the proof with $\alpha$ envelope.
\end{proof}

\subsection{Part 1 - $A=\KK^{\alpha}H\otimes M_{\bar{s}}(\KK)$ , $B=\KK^{\beta}G\otimes M_{r_{2}}(\KK)$
, $H\leq G$ and $\bar{s}\in G^{r_{1}}$}

We now start to build the graded embeddings. In all the following
steps, we assume that $G$ is abelian and $\KK$ is an algebraically
closed field of characteristic zero. We will always have a twisted
group algebra $\KK^{\alpha}G_{1}$ in $A$ and $\KK^{\beta}G_{2}$
in $B$, and we denote the basis of $\KK^{\alpha}G_{1}$ with $V_{g}$
and the basis for $\KK^{\beta}G_{2}$ with $U_{g}$ for $g\in G$.
\begin{thm}
\label{thm:tuple only left}Let $G$ be a finite abelian group, $H\leq G$
a subgroup, $\bar{s}\in G^{r_{1}}$ a tuple and $\alpha\in Z^{2}(H,\KK^{\times}),\;\beta\in Z^{2}(G,\KK^{\times})$.
Let $\beta'=\beta\mid_{H}$, and let $d$ be the dimension of the
smallest representation of $\KK^{\alpha/\beta'}H$ then for $A=\KK^{\alpha}H\otimes M_{\bar{s}}(\KK)$
and $B=\KK^{\beta}G\otimes M_{r_{2}}(\KK)$ the following are equal
\begin{enumerate}
\item There is a graded embedding $A\hookrightarrow B$
\item $Id_{G}(B)\subseteq Id_{G}(A)$
\item $r_{2}\geq d\cdot\left|\bar{s}\right|=d\cdot r_{1}$, or equivalently
$\overline{r_{2}}\succsim_{G}\bar{d}\times\bar{s}$
\end{enumerate}
\end{thm}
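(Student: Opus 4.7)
The plan is to prove the cycle $(1) \Rightarrow (2) \Rightarrow (3) \Rightarrow (1)$. The implication $(1) \Rightarrow (2)$ is immediate, since any graded embedding forces $Id_G(B) \subseteq Id_G(A)$. For both remaining directions I would first apply the $\beta^{-1}$-envelope from Lemma \ref{lem:cocycle envelope}: by Theorem \ref{thm:Envelope simple} this replaces $B$ by $B^{\beta^{-1}} \cong_G \KK G \otimes M_{r_2}(\KK)$ (trivial cocycle) and $A$ by $A^{\beta^{-1}} \cong_G \KK^{\alpha/\beta'} H \otimes M_{\bar s}(\KK)$, while preserving both the existence of a graded embedding and the inclusion of $T$-ideals. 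Set $\gamma = \alpha/\beta'$. The standard theory of twisted group algebras of finite abelian groups over an algebraically closed field of characteristic zero yields $\KK^\gamma H \cong M_d(\KK)^k$ with $|H| = k d^2$, so the ungraded structure of $A^{\beta^{-1}}$ is $M_{d r_1}(\KK)^k$.

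For $(3) \Rightarrow (1)$ the condition $\overline{r_2} \succsim_G \bar{d} \times \bar{s}$ reduces to $r_2 \geq d r_1$. I would pick an irreducible representation $\rho : \KK^\gamma H \to M_d(\KK)$; as each $V_h$ is invertible, $\rho(V_h)$ is a unit in $M_d(\KK)$, hence nonzero. Identifying $M_d(\KK) \otimes M_{r_1}(\KK)$ with a block of $M_{r_2}(\KK)$, I would define
\[
\phi(V_h \otimes E_{i,j}) = U_{s_i^{-1} h s_j} \otimes \rho(V_h) \otimes E_{i,j}.
\]
Checking that $\phi$ is graded, multiplicative (using that $G$ is abelian and $\rho(V_h V_{h'}) = \gamma(h,h') \rho(V_{hh'})$), and injective (from the linear independence of the $U_g \otimes E_{i,j}$ together with $\rho(V_h) \neq 0$) is routine. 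Pulling $\phi$ back through the envelope then gives the desired graded embedding $A \hookrightarrow_G B$.

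For $(2) \Rightarrow (3)$, suppose toward contradiction that $r_2 < d r_1$. I would exhibit a graded polynomial in $Id_G(B^{\beta^{-1}}) \setminus Id_G(A^{\beta^{-1}})$ using the standard polynomial. For any $\bar g \in G^{2 r_2}$, a homogeneous evaluation $x_{g_i} \mapsto U_{g_i} \otimes M_i$ in $B^{\beta^{-1}}$ satisfies
\[
St_{2 r_2}(U_{g_1} \otimes M_1, \ldots, U_{g_{2 r_2}} \otimes M_{2 r_2}) = U_{g_1 \cdots g_{2 r_2}} \otimes St_{2 r_2}(M_1, \ldots, M_{2 r_2}) = 0
\]
by $G$ abelian with trivial cocycle and by Amitsur--Levitzki on $M_{r_2}(\KK)$, so this polynomial is a graded identity of $B^{\beta^{-1}}$ for every choice of degrees. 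Since $St_{2 r_2}$ is not an ungraded identity of $A^{\beta^{-1}} \cong M_{d r_1}(\KK)^k$ (Amitsur--Levitzki on an $M_{d r_1}$ factor), expanding a nonzero ungraded evaluation by multilinearity produces basis elements $V_{h_i} \otimes E_{k_i, l_i}$ on which $St_{2 r_2}$ does not vanish; setting $g_i = s_{k_i}^{-1} h_i s_{l_i}$ places the same graded polynomial outside $Id_G(A^{\beta^{-1}})$, contradicting (2) by Lemma \ref{lem:cocycle envelope}. The main subtlety is the envelope reduction together with the structural fact $\KK^\gamma H \cong M_d(\KK)^k$; once these are in place, everything else is a direct application of Amitsur--Levitzki.
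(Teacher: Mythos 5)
Your proof is correct, and its main ingredients (the $\beta^{-1}$-envelope reduction, the smallest representation $\rho$ of $\KK^{\alpha/\beta'}H$, and Amitsur--Levitzki) are the same as the paper's; the difference is in how the hard direction $(2)\Rightarrow(3)$ is organized. The paper first proves a reduction lemma showing that $Id_{G}(B)\subseteq Id_{G}(A)$ forces $Id_{H}(\KK H\otimes M_{r_{2}}(\KK))\subseteq Id_{H}(\KK^{\alpha}H\otimes M_{r_{1}}(\KK))$ (i.e.\ it restricts the grading group to $H$ and trivializes the tuple $\bar{s}$), and only then compares ungraded $T$-ideals of $M_{r_{2}}(\KK)$ and $M_{dr_{1}}(\KK)$ via Amitsur--Levitzki. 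You bypass that intermediate lemma entirely by exhibiting a single separating polynomial: since the cocycle of $B^{\beta^{-1}}$ is trivial and $G$ is abelian, every degree-labelling of $St_{2r_{2}}$ is a graded identity of $\KK G\otimes M_{r_{2}}(\KK)$, while a nonzero ungraded evaluation of $St_{2r_{2}}$ on the factor $M_{dr_{1}}(\KK)$ of $A^{\beta^{-1}}$, expanded over the homogeneous basis $V_{h}\otimes E_{k,l}$, supplies a labelling that is not a graded identity of $A^{\beta^{-1}}$. This is shorter and more self-contained for this theorem; the price is that you do not obtain the paper's reusable intermediate statements (the reduction lemma and Lemma~\ref{lem:smallest representation}(2)), which the paper invokes again in Parts 2 and 3 when the matrix part of $B$ carries a nontrivial elementary grading and the single-standard-polynomial trick no longer applies verbatim. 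Your $(3)\Rightarrow(1)$ map $V_{h}\otimes E_{i,j}\mapsto U_{s_{i}^{-1}hs_{j}}\otimes\rho(V_{h})\otimes E_{i,j}$ is just the paper's composite $\varphi(V_{h}\otimes E_{i,j})=\iota(V_{h})\otimes E_{i,j}$ written with the trivial elementary grading on $M_{r_{2}}(\KK)$ instead of regrading by $\bar{d}\times\bar{s}$; both checks of gradedness and multiplicativity use commutativity of $G$ in the same place.
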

Taking the $\beta^{-1}$ envelope on both algebras we can use \prettyref{lem:cocycle envelope}
to see that part $(1)$ is equivalent to a graded embedding $A^{\beta^{-1}}\hookrightarrow B^{\beta^{-1}}$
and part $(2)$ is equivalent to $Id_{G}(B^{\beta^{-1}})\subseteq Id_{G}(A^{\beta^{-1}})$.
By \prettyref{thm:Envelope simple} we get that $B^{\beta^{-1}}\cong_{G}\KK G\otimes M_{r_{2}}(\KK)$
and $A^{\beta^{-1}}\cong_{G}\KK^{\alpha/\beta}H\otimes M_{\bar{s}}(\KK)$,
so part $(3)$ remains without change. This gives us a reduction to
the problem where the cocycle in $B$ is trivial, and from now on
we assume that $\beta=1$.

Assume first that $A=\KK^{\alpha}G\otimes M_{r_{1}}(\KK)$, so the
matrix part of $A$ is trivially graded. $\beta$ is trivial so we
need a way to map the possibly non-trivial cocycle $\alpha$ to $B$,
but since $\beta$ is trivial then we will have to compensate using
the matrix part of $B$. In order to prove the theorem we first give
a lemma that shows the compensation costs a $d\times d$ matrix algebra
in $B$ where $d$ is the dimension of the smallest representation
of $\KK^{\alpha}H$.
\begin{lem}
\label{lem:smallest representation}Let $G$ be an arbitrary finite
group, $r_{1},r_{2}\in\NN$, $\alpha\in Z^{2}(G,\KK^{\times})$, and
denote by $d$ the dimension of the smallest representation of $\KK^{\alpha}G$,
then 
\begin{enumerate}
\item There is a graded embedding $\KK^{\alpha}G\otimes M_{r_{1}}(\KK)\hookrightarrow\KK G\otimes M_{r_{2}}(\KK)$
iff $r_{2}\geq r_{1}d$. 
\item If $G$ is abelian and $Id_{G}\left(\KK G\otimes M_{r_{2}}(\KK)\right)\subseteq Id_{G}\left(\KK^{\alpha}G\otimes M_{r_{1}}(\KK)\right)$
then $r_{2}\geq r_{1}d$.
\end{enumerate}
\end{lem}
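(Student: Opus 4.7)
The plan is to handle the two parts separately, translating each into a representation-theoretic question about $\KK^{\alpha}G$. For the easy direction of (1), that $r_{2}\geq r_{1}d$ implies an embedding, I would pick a unital representation $\rho:\KK^{\alpha}G\to M_{d}(\KK)$ of the smallest possible dimension and set
\[
\psi\bigl(V_{g}\otimes M\bigr)\;=\;U_{g}\otimes\rho(V_{g})\otimes M\;\in\;\KK G\otimes M_{d}(\KK)\otimes M_{r_{1}}(\KK)\;\cong\;\KK G\otimes M_{dr_{1}}(\KK),
\]
post-composed with the non-unital block inclusion $M_{dr_{1}}\hookrightarrow M_{r_{2}}$. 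The cocycle relation $V_{g}V_{h}=\alpha(g,h)V_{gh}$ makes $\psi$ a graded algebra homomorphism, and injectivity is immediate: distinct graded components are separated by the $U_{g}$'s, and on each component $M\mapsto\rho(V_{g})\otimes M$ is injective because $\rho(V_{g})$ is invertible (being the image of an invertible element under a unital representation).

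For the converse of (1), I would first reduce to unital embeddings: if $\varphi$ is not unital then $\varphi(1_{A})=U_{e}\otimes E$ is a rank-$s$ idempotent and $\varphi$ factors as a unital graded embedding into $\KK G\otimes EM_{r_{2}}E\cong\KK G\otimes M_{s}$ with $s\leq r_{2}$. Once $\varphi$ is unital, restricting to the degree-$e$ component gives a unital embedding $\varphi_{0}:M_{r_{1}}\hookrightarrow M_{r_{2}}$, so $r_{2}=r_{1}k$ and, by the double centralizer theorem, the commutant of $\varphi_{0}(M_{r_{1}})$ in $M_{r_{2}}$ is isomorphic to $M_{k}(\KK)$. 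Writing $\varphi(V_{g}\otimes I_{r_{1}})=U_{g}\otimes A_{g}$, the commutation of $V_{g}\otimes I_{r_{1}}$ with $1\otimes M_{r_{1}}$ forces $A_{g}\varphi_{0}(M)=\varphi_{0}(M)A_{g}$, so each $A_{g}$ lies in $M_{k}$. The assignment $\rho:V_{g}\mapsto A_{g}$ is then a unital representation $\KK^{\alpha}G\to M_{k}$, so $k\geq d$ by minimality of $d$, giving $r_{2}=r_{1}k\geq r_{1}d$.

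For part (2), I would convert the graded containment into an ungraded one: a multilinear polynomial $p(x_{1},\dots,x_{n})$ is an ungraded identity of $A$ iff, for every assignment of degrees $\bar{g}$, its graded interpretation is a graded identity, so $Id_{G}(B)\subseteq Id_{G}(A)$ forces $Id(B)\subseteq Id(A)$. For abelian $G$ and algebraically closed $\KK$ of characteristic zero, $\KK G\cong\KK^{|G|}$ and $\KK^{\alpha}G\cong\bigoplus_{|R|}M_{d}(\KK)$ with all simple components of the common dimension $d$; hence as ungraded algebras $\KK G\otimes M_{r_{2}}\cong\bigoplus_{|G|}M_{r_{2}}$ and $\KK^{\alpha}G\otimes M_{r_{1}}\cong\bigoplus_{|R|}M_{dr_{1}}$, whose ungraded identity ideals are respectively $Id(M_{r_{2}})$ and $Id(M_{dr_{1}})$. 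The resulting inclusion $Id(M_{r_{2}})\subseteq Id(M_{dr_{1}})$, applied to the Amitsur--Levitzki polynomial $St_{2r_{2}}\in Id(M_{r_{2}})$, yields $r_{2}\geq dr_{1}$.

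The most delicate step is the double-centralizer argument in the converse of (1): one must execute the reduction to unital embeddings carefully so that the degree-$e$ restriction is genuinely unital and so that $V_{g}\mapsto A_{g}$ is a bona fide unital representation of the whole $\KK^{\alpha}G$. Part (2), by contrast, leans on the standard but non-trivial structural fact that all irreducibles of $\KK^{\alpha}G$ share the common dimension $d$ for abelian $G$ over an algebraically closed field of characteristic zero; this is exactly what collapses the ungraded comparison to a single-matrix-algebra bound and permits the direct application of Amitsur--Levitzki.
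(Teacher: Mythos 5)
Your proof is correct. The sufficiency construction in part (1) (twisting by a minimal representation $\rho$ of $\KK^{\alpha}G$ and tensoring with the matrix part) and all of part (2) (passing from the graded to the ungraded containment via multilinear polynomials, using that for $G$ abelian all irreducible representations of $\KK^{\alpha}G$ share the common dimension $d$, and then applying Amitsur--Levitzki to get $2dr_{1}\leq 2r_{2}$) are essentially the paper's arguments. Where you genuinely diverge is the necessity direction of part (1): the paper composes the given embedding $\varphi$ with $\epsilon\otimes id$, where $\epsilon:\KK G\rightarrow\KK$ is the augmentation, notes that the composite is nonzero because $\varphi(V_{e}\otimes I)=U_{e}\otimes a$ with $a\neq0$ by gradedness, and concludes that some simple ungraded summand $M_{d_{i}r_{1}}(\KK)$ of $\KK^{\alpha}G\otimes M_{r_{1}}(\KK)$ embeds in $M_{r_{2}}(\KK)$, whence $dr_{1}\leq d_{i}r_{1}\leq r_{2}$. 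You instead cut down by the idempotent $\varphi(1)$ to a unital embedding, restrict to the degree-$e$ component to obtain a unital copy of $M_{r_{1}}$ in $M_{r_{2}}$, and realize $\KK^{\alpha}G$ inside its commutant $M_{k}$ via the double centralizer theorem, forcing $k\geq d$ and $r_{2}=r_{1}k\geq r_{1}d$. Both are valid; the paper's route is shorter and needs no unitality bookkeeping, while yours avoids decomposing $\KK^{\alpha}G$ into simple summands at this stage and in effect exhibits the tensor factorization $M_{r_{2}}\cong M_{r_{1}}\otimes M_{k}$ that the sufficiency direction runs in reverse. The only point to make explicit is that your commutation step $A_{g}\varphi_{0}(M)=\varphi_{0}(M)A_{g}$ uses the normalized cocycle (so that $V_{e}$ is the unit and $V_{g}V_{e}=V_{e}V_{g}=V_{g}$), which the paper arranges in its twisted group algebra example.
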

\begin{proof}

\begin{enumerate}
\item Denote by $\epsilon:\KK G\rightarrow\KK$ the augmentation representation
and by $\rho:\KK^{\alpha}G\rightarrow M_{d}(\KK)$ the smallest representation
of $\KK^{\alpha}G$. \\
Suppose first that there is a graded embedding $\varphi:\KK^{\alpha}G\otimes M_{r_{1}}(\KK)\hookrightarrow\KK G\otimes M_{r_{2}}(\KK)$
and compose it with $\epsilon\otimes id$, then we have a map $\KK^{\alpha}G\otimes M_{r_{1}}(\KK)\rightarrow M_{r_{2}}(\KK)$.
The function $\varphi$ is a graded embedding so $\varphi(V_{e}\otimes I)=U_{e}\otimes a$
for some $0\neq a\in M_{r_{2}}(\KK)$ and therefore $\epsilon\circ\varphi(V_{e}\otimes I)=\epsilon(U_{e})\otimes a=a\neq0$
so $\epsilon\circ\varphi\neq0$. Decompose $\KK^{\alpha}G$ to a direct
sum of matrix algebras of dimensions $d_{1}^{2}\leq d_{2}^{2}\leq\cdots\leq d_{k}^{2}$
where $d_{1}=d$. Since $\epsilon\circ\varphi\neq0$, at least one
of these matrix algebras, which are simple, is mapped injectively
into $M_{r_{2}}(\KK)$ so there is $1\leq i\leq k$ such that $d^{2}r_{1}^{2}\leq d_{i}^{2}r_{1}^{2}\leq r_{2}^{2}$
and therefore $dr_{1}\leq r_{2}$.\\
Suppose now that $dr_{1}\leq r_{2}$, then we have an embedding $\psi:M_{d}(\KK)\otimes M_{r_{1}}(\KK)\rightarrow M_{r_{2}}(\KK)$.
Define $\varphi:\KK^{\alpha}G\otimes M_{r_{1}}(\KK)\rightarrow\KK G\otimes M_{r_{2}}(\KK)$
by 
\[
\varphi(\sum_{g\in G}V_{g}\otimes a_{g})=\sum_{g\in G}U_{g}\otimes\psi(\rho(V_{g})\otimes a_{g})
\]
then $\varphi$ is linear, but also multiplicative since
\begin{eqnarray*}
\varphi\left(\left(V_{g}\otimes a_{g}\right)\left(V_{h}\otimes b_{h}\right)\right) & = & \varphi\left(\alpha(g,h)V_{gh}\otimes a_{g}b_{h}\right)=\alpha(g,h)\left(U_{gh}\otimes\psi(\rho(V_{gh})\otimes a_{g}b_{h})\right)\\
 & = & U_{g}U_{h}\otimes\psi(\rho(V_{g}V_{h})\otimes a_{g}b_{h})=\left(U_{g}\otimes\psi(\rho(V_{g})\otimes a_{g})\right)\left(U_{h}\otimes\psi(\rho(V_{h})\otimes b_{h})\right)\\
 & = & \varphi\left(V_{g}\otimes a_{g}\right)\varphi\left(V_{h}\otimes b_{h}\right)
\end{eqnarray*}
$\varphi$ is a graded non-zero algebra homomorphism and $A$ is $G$-simple
so $\varphi$ is a graded embedding.
\item Let $\alpha\in Z^{2}(G,\KK^{\times})$ and let $H=\left\{ h\in G\;\mid\;\frac{\alpha(h,g)}{\alpha(g,h)}=1\;\forall g\in G\right\} $
be the degrees of homogeneous elements in the center of $\KK^{\alpha}G$,
then as a non-graded algebra we have (\cite{aljadeff_graded_2011}
Corollary 13)
\[
\KK^{\alpha}G\cong\overbrace{M_{d}(\KK)\oplus M_{d}(\KK)\oplus\cdots\oplus M_{d}(\KK)}^{\left|H\right|\; times}\qquad d^{2}=\left[G:H\right]
\]
$\KK G$ is semisimple abelian algebra, so as a non-graded algebra
it is a direct sum of the field $\KK$ and we get
\begin{eqnarray*}
\KK^{\alpha}G\otimes M_{r_{1}}(\KK) & \cong & \overbrace{M_{dr_{1}}(\KK)\oplus M_{dr_{1}}(\KK)\oplus\cdots\oplus M_{dr_{1}}(\KK)}^{\left|H\right|\; times}\\
\KK G\otimes M_{r_{2}}(\KK) & \cong & \overbrace{M_{r_{2}}(\KK)\oplus M_{r_{2}}(\KK)\oplus\cdots\oplus M_{r_{2}}(\KK)}^{\left|G\right|\; times}\\
(*)\quad Id(M_{r_{2}}(\KK))=Id(\KK G\otimes M_{r_{2}}(\KK)) & \subseteq & Id(\KK^{\alpha}G\otimes M_{r_{1}}(\KK))=Id(M_{dr_{1}}(\KK))
\end{eqnarray*}
From Amitsur-Levitzki theorem\cite{amitsur_minimal_1950} the multilinear
identity of $M_{n}(\KK)$ with the smallest degree, has degree $2n$,
and from the inclusion above we see that $2dr_{1}\leq2r_{2}\;\Rightarrow\; dr_{1}\leq r_{2}$.
\end{enumerate}
\end{proof}
Since we can assume that $\beta\equiv1$, then the last lemma proves
that $(1)\iff(3)$ and $(2)\Rightarrow(3)$ in \ref{thm:tuple only left}.
$(1)\Rightarrow(2)$ is obvious, and so the theorem is proved for
the case where $A=\KK^{\alpha}G\otimes M_{r_{1}}(\KK)$.\\

Before we continue, we note that \prettyref{lem:smallest representation}
part (1) shows that if $d$ is the dimension of the smallest representation
of $\KK^{\alpha}G$, then there is a graded embedding $\varphi:\KK^{\alpha}G\rightarrow\KK G\otimes M_{d}(\KK)$.
As non-graded algebras $\KK^{\alpha}G$ and $\KK G\otimes M_{d}(\KK)$
are direct sum of matrix algebras 
\begin{eqnarray*}
\KK^{\alpha}G & \cong & M_{d_{1}}(\KK)\oplus\cdots\oplus M_{d_{n}}(\KK)\\
\KK G\otimes M_{d}(\KK) & \cong & \left(M_{r_{1}}(\KK)\oplus\cdots\oplus M_{r_{m}}(\KK)\right)\otimes M_{d}(\KK)\cong M_{dr_{1}}(\KK)\oplus\cdots\oplus M_{dr_{m}}\\
 &  & d=d_{1}\leq d_{2}\leq\cdots\leq d_{n}\quad;\quad1=r_{1}\leq r_{2}\leq\cdots\leq r_{m}
\end{eqnarray*}
Let $\pi_{j}:\KK G\otimes M_{d}(\KK)\rightarrow M_{dr_{j}}(\KK)$
be the natural projection. $M_{d_{n}}(\KK)$ is not in the kernel
of $\varphi$ (because it is injective), so there is some $j$ such
that $M_{d_{n}}(\KK)$ is not in the kernel of $\pi_{j}\circ\varphi$,
and since it is simple, then $\pi_{j}\circ\varphi\mid_{M_{d_{n}}(\KK)}$
is injective. From this we get that $d_{n}\leq dr_{j}\leq dr_{m}\quad\Rightarrow\quad d_{n}/d\leq r_{m}$.
By definition $d=d_{1}$, and $r_{1}=1$ because we have the augmentation
representation on $\KK G$, so we just proved that:
\begin{thm}
Let $G$ be a finite arbitrary group. For $\alpha\in Z^{2}(G,\KK^{\times})$
we define $\Phi(\alpha)$ to be the ratio $\frac{d_{n}}{d_{1}}$ where
$d_{n}$ is the dimension of the largest representation of $\KK^{\alpha}G$
and $d_{1}$ is the dimension of the smallest representation, then
$\Phi$ achieves its maximum at $\alpha\equiv1$. 
\end{thm}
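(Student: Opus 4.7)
The plan is to derive this theorem essentially as a corollary of part (1) of the smallest representation lemma together with the dimension-counting argument sketched in the paragraph preceding the statement. The key observation is that if $d = d_1$ is the dimension of the smallest representation of $\KK^{\alpha}G$, then that lemma (with $r_1 = 1$, $r_2 = d$) produces a graded embedding $\varphi:\KK^{\alpha}G \hookrightarrow \KK G \otimes M_d(\KK)$, and we then exploit this injection on the non-graded Wedderburn decompositions.

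First I would decompose both algebras as ordinary (non-graded) algebras. Write $\KK^{\alpha}G \cong \bigoplus_{i=1}^{n} M_{d_i}(\KK)$ with $d_1 \leq d_2 \leq \cdots \leq d_n$, so that $\Phi(\alpha) = d_n / d_1$. Similarly decompose $\KK G \cong \bigoplus_{j=1}^{m} M_{r_j}(\KK)$ with $1 = r_1 \leq r_2 \leq \cdots \leq r_m$; here the crucial point is that $r_1 = 1$, which follows from the existence of the augmentation representation $\epsilon\colon \KK G \to \KK$ (available for every finite group, since $\alpha \equiv 1$ allows the constant map $g \mapsto 1$). Tensoring gives
\[
\KK G \otimes M_d(\KK) \;\cong\; \bigoplus_{j=1}^{m} M_{d\, r_j}(\KK),
\]
and in particular $\Phi(1) = r_m / r_1 = r_m$ (for $\alpha \equiv 1$, the smallest summand has dimension $1 \cdot d = d$ when this computation is carried out for $\KK G \otimes M_d$, but applied directly to $\KK G$ itself the ratio is exactly $r_m$).

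Second, I would feed the simple factor $M_{d_n}(\KK) \subseteq \KK^{\alpha}G$ into $\varphi$. Let $\pi_j\colon \KK G \otimes M_d(\KK) \twoheadrightarrow M_{d\,r_j}(\KK)$ be the $j$-th projection. Since $\varphi$ is injective it is in particular nonzero on $M_{d_n}(\KK)$, so at least one composition $\pi_j \circ \varphi$ restricted to $M_{d_n}(\KK)$ is nonzero; because $M_{d_n}(\KK)$ is simple, this restriction is then injective. Comparing dimensions yields $d_n \leq d\, r_j \leq d\, r_m$, i.e.\ $d_n / d_1 \leq r_m$.

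Combining these gives $\Phi(\alpha) = d_n/d_1 \leq r_m = \Phi(1)$, as desired. There is no real obstacle here: the whole argument is a repackaging of the embedding already constructed in the lemma together with the elementary fact that a homomorphism out of a simple algebra is either zero or injective. The only point that deserves a sentence of care is noting that $r_1 = 1$ for $\KK G$, so that $\Phi(1)$ equals the top matrix dimension $r_m$ rather than some larger ratio.
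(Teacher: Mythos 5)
Your proof is correct and follows essentially the same route as the paper: both use part (1) of the smallest-representation lemma to obtain the graded embedding $\varphi:\KK^{\alpha}G\hookrightarrow\KK G\otimes M_{d}(\KK)$, then compare the non-graded Wedderburn decompositions via a projection onto a simple summand, using simplicity of $M_{d_{n}}(\KK)$ to get $d_{n}\leq dr_{m}$ and the augmentation representation to get $r_{1}=1$. No differences worth noting.
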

We return now to the general case. Suppose that $r_{2}\geq d\left|\bar{s}\right|$
as in part $(3)$ in the theorem then $\overline{r_{2}}\succsim_{G}\bar{d}\times\bar{s}$
since all the elements of $\bar{s}$ are in $G$. This means that
up to this equivalence we can {}``find'' the $M_{\bar{s}}(\KK)$
part of $A$ in the matrix part of $B$.

If this idea is true, then the choice of the tuple $\bar{s}$ is irrelevant,
and only its size matters. Changing $\bar{s}$ to the tuple $\left(e,e,...,e\right)$
to get the algebra $A'=\KK^{\alpha}H\otimes M_{r_{1}}(\KK)$ we reduce
the grading of $A$ to the subgroup $H$. Doing the same with $B$,
we need to reduce the group algebra part from $\KK G$ to $\KK H$,
and this will give a reduction to the first case.
\begin{lem}
Let $G$ be a finite abelian group, $H\leq G$. Let $A=\KK^{\alpha}H\otimes M_{\bar{s}}(\KK),\; B=\KK G\otimes M_{r_{2}}(\KK)$
where $\bar{s}\in G^{r_{1}}$ and define $A'=\KK^{\alpha}H\otimes M_{r_{1}}(\KK)$,
$B'=\KK H\otimes M_{r_{2}}(\KK)=B_{H}$. 

If $Id_{G}(B)\subseteq Id_{G}(A)$ then $Id_{H}(B')\subseteq Id_{H}(A')$.\end{lem}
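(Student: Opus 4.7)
The plan is to take any $H$-multilinear polynomial $f = \sum_\sigma \lambda_\sigma \prod x_{h_{\sigma(i)},\sigma(i)} \in Id_H(B')$ and prove $f \in Id_H(A')$ by checking vanishing on matrix-unit assignments in $A'$, lifting each such check to a $G$-graded evaluation in $A$ via an auxiliary polynomial. First I would observe that, since the matrix part of $B = \KK G \otimes M_{r_2}(\KK)$ is trivially graded, $B_h = B'_h$ for every $h \in H$; consequently $f$, viewed as a $G$-multilinear polynomial whose variables carry $H$-degrees, already lies in $Id_G(B)$, and the hypothesis yields $f \in Id_G(A)$.

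By multilinearity, proving $f \in Id_H(A')$ reduces to showing that $f$ vanishes on every substitution $x_{h_i,i} \mapsto V_{h_i} \otimes E_{j_i,k_i}$ in $A'$. The key observation is that the same element $V_{h_i} \otimes E_{j_i,k_i}$, viewed inside $A$, is $G$-homogeneous of degree $g_i := s_{j_i}^{-1} h_i s_{k_i}$, typically outside $H$. I would then apply the hypothesis to the auxiliary $G$-multilinear polynomial
\[
f_{\vec g}(x_{g_1,1}, \ldots, x_{g_n,n}) = \sum_\sigma \lambda_\sigma \prod x_{g_{\sigma(i)},\sigma(i)},
\]
which is homogeneous of $G$-degree $\prod g_i$ (order-independent since $G$ is abelian). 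Since the cocycle on $\KK G$ is trivial, evaluating $f_{\vec g}$ in $B$ gives $U_{\prod g_i} \otimes \sum_\sigma \lambda_\sigma \prod b_{\sigma(i)}$; the same ungraded expression $\sum_\sigma \lambda_\sigma \prod x_{\sigma(i)}$ governs the evaluation of $f$ in $B'$, so $f \in Id_H(B')$ forces $f_{\vec g} \in Id_G(B)$, and the hypothesis then yields $f_{\vec g} \in Id_G(A)$.

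Finally, evaluating $f_{\vec g}$ in $A$ at the substitution $x_{g_i,i} \mapsto V_{h_i} \otimes E_{j_i,k_i}$ (a legitimate $G$-graded substitution, since this element sits in $A_{g_i}$) and collecting the cocycle factors $\alpha(\bar h^\sigma)$ that appear when the $V_h$'s are multiplied out yields
\[
V_{\prod h_i} \otimes \sum_\sigma \lambda_\sigma \alpha(\bar h^\sigma) \prod E_{j_{\sigma(i)},k_{\sigma(i)}},
\]
which coincides with the evaluation of the original $f$ on the same tuple inside $A'$. Hence it vanishes, and since the matrix-unit indices $(j_i,k_i)$ were arbitrary, $f \in Id_H(A')$. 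The main conceptual hurdle is precisely this middle step: by varying $\vec g$ one generates a whole family of distinct $G$-multilinear polynomials, each an identity of $B$ for the same underlying ungraded reason, and the hypothesis converts them all into identities of $A$---together they pin down every matrix-unit assignment in $A'$, even those whose degrees in the $G$-grading of $A$ fall outside $H$.
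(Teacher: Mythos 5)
Your proposal is correct and follows essentially the same route as the paper: fix a matrix-unit assignment in $A'$, relabel the variables of $f$ with the $G$-degrees $g_i=s_{j_i}^{-1}h_is_{k_i}$ that those elements carry in $A$, observe that the relabeled polynomial is still an identity of $B$ because the underlying ungraded polynomial $\sum_\sigma\lambda_\sigma\prod x_{\sigma(i)}$ is an identity of $M_{r_2}(\KK)$ (this is the paper's Lemma \ref{lem:cocycle envelope-ungraded} with trivial cocycle, together with abelianness of $G$), and then transport the vanishing back to $A'$. The only difference is your opening observation that $f$ itself already lies in $Id_G(B)$, which is true but not needed for the argument.
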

\begin{proof}
We first recall from \prettyref{lem:cocycle envelope-ungraded} that
any multilinear polynomial identity of $B'$ has the form $f(x_{h_{1},1},...,x_{h_{n},n})={\displaystyle \sum_{\sigma\in S_{n}}}\lambda_{\sigma}\prod x_{h_{\sigma(i)},\sigma(i)}$
where $\bar{h}=(h_{1},...,h_{n})\in H^{n}$ and $\tilde{f}=\sum\lambda_{\sigma}\prod x_{\sigma(i)}\in Id(M_{r_{2}}(\KK))$
(we use here that $\beta\equiv1$). We want to show that $f$ is a
graded identity of $A'$. From multilinearity, $f\in Id_{H}(\KK^{\alpha}H\otimes M_{r_{1}}(\KK))$
iff for every $a_{i},b_{i}\in\left[r_{1}\right],\;1\leq i\leq n$
we have 
\[
(*)\qquad f\left(V_{h_{1}}\otimes E_{a_{1},b_{1}},...,V_{h_{n}}\otimes E_{a_{n},b_{n}}\right)=0
\]
Notice that the algebras $A$ and $A'$ are the same and only differ
in their grading, so we can think of the assignment above in $A$
instead of $A'$. Fix now the indices $a_{i},b_{i}$, then the grading
of the assignment above in $A$ is $g_{i}=\deg_{A}\left(V_{h_{i}}\otimes E_{a_{i},b_{i}}\right)=s_{a_{i}}^{-1}h_{i}s_{b_{i}}$.
For the same $f$ as above let $\bar{g}=(g_{1},...,g_{n})\in G^{n}$
then using again \prettyref{lem:cocycle envelope-ungraded} and the
fact that $\beta\equiv1$ we get that $f(x_{g_{1},1},...,x_{g_{n},n})={\displaystyle \sum_{\sigma\in S_{n}}}\lambda_{\sigma}\prod x_{g_{\sigma(i)},\sigma(i)}$
is an identity of $B$ , because $\tilde{f}\in Id(M_{r_{2}}(\KK))$.
This shows that $(*)$ is indeed zero in $A$, so also in $A'$. We
can do this for any assignment in $A'$ of basis elements (though
for each assignment, the tuple $\bar{g}$ may be different). $f$
is multilinear, so $f\in Id_{G}(A')$ and we are done.
\end{proof}
We are now ready to prove \prettyref{thm:tuple only left}
\begin{proof}
The direction $(1)\Rightarrow(2)$ is obvious.

Suppose we have $(2)$ so $Id_{G}(\KK G\otimes M_{r_{2}}(\KK))\subseteq Id_{G}(\KK^{\alpha}H\otimes M_{\bar{s}}(\KK))$.
From the last lemma we have $Id_{G}(\KK H\otimes M_{r_{2}}(\KK))\subseteq Id_{G}(\KK^{\alpha}H\otimes M_{\left|\bar{s}\right|}(\KK))$
and we can now use \prettyref{lem:smallest representation} (2) to
conclude that $r_{2}\geq d\left|s\right|$. Notice that since $\bar{s}\in G^{r_{1}}$
then $\bar{d}\times\bar{s}\sim_{G}\bar{d}\times\overline{r_{1}}$,
so $\overline{r_{2}}\succsim_{G}\bar{d}\times\bar{s}$ and we proved
$(2)\Rightarrow(3)$.

Assume $(3)$. From \prettyref{lem:smallest representation} (1) we
have a graded embedding $\iota:\KK^{\alpha}H\hookrightarrow\KK H\otimes M_{d}(\KK)$.
Since $\overline{r_{2}}\succsim_{G}\bar{d}\times\bar{s}$, we can
assume wlog that $B=\KK G\otimes M_{d}(\KK)\otimes M_{\bar{s}}(\KK)$.
Define the function $\varphi:A\rightarrow B$ by
\[
\varphi(V_{h}\otimes E_{i,j})=\iota(V_{h})\otimes E_{i,j}
\]
This is a graded algebra homomorphism since
\begin{eqnarray*}
\deg(\iota(V_{h})\otimes E_{i,j}) & = & s_{i}^{-1}\deg(\iota(V_{h}))s_{j}=s_{i}^{-1}hs_{j}=\deg(V_{h}\otimes E_{i,j})\\
\varphi(V_{h_{1}}\otimes E_{i_{1},j_{1}})\varphi(V_{h_{2}}\otimes E_{i_{2},j_{2}}) & = & \left(\iota(V_{h_{1}})\otimes E_{i_{1},j_{1}}\right)\left(\iota(V_{h_{2}})\otimes E_{i_{2},j_{2}}\right)=\iota(V_{h_{1}}V_{h_{2}})\otimes E_{i_{1},j_{2}}\delta_{j_{1},i_{2}}\\
 & = & \alpha(h_{1},h_{2})\iota(V_{h_{1}h_{2}})\otimes E_{i_{1},j_{2}}\delta_{j_{1},i_{2}}=\alpha(h_{1},h_{2})\delta_{i_{2},j_{1}}\varphi(V_{h_{1}h_{2}}\otimes E_{i_{1},j_{2}})\\
 & = & \varphi(V_{h_{1}}V_{h_{2}}\otimes E_{i_{1},j_{1}}E_{i_{2},j_{2}})=\varphi(\left(V_{h_{1}}\otimes E_{i_{1},j_{1}}\right)\left(V_{h_{2}}\otimes E_{i_{2},j_{2}}\right))
\end{eqnarray*}
$\varphi\neq0$ and $A$ is simple graded so $\varphi$ is a graded
embedding hence $(3)\Rightarrow(1)$
\end{proof}

\subsection{Part 2 - $A=\KK^{\alpha}N_{1}\otimes M_{\bar{s}}(\KK)$ , $B=\KK^{\beta}N_{2}\otimes M_{\bar{t}}(\KK)$
, $\bar{s}\in N_{2}^{r_{1}},\;\bar{t}\in N_{1}^{r_{2}}$}

Before we go on to the proof of this part we give a notation for better
use of the structure of $A$ and $B$ as block matrices. Let $N_{1}\leq G'\leq G$
and $\bar{s}\in G^{r_{1}}$ then we can decompose $\bar{s}$ into
$\bar{s}=\bar{s}^{(1)}+\bar{s}^{(2)}+\cdots+\bar{s}^{(n)}$ such that
$\bar{s}^{(i)}$ is a tuple of elements in $G'g_{i}$, and $\left\{ g_{1},...,g_{n}\right\} $
are different right $G'$ coset representatives (in this part we will
take $G'=N_{1}$). With this fixed decomposition, for the algebra
$A=\KK^{\alpha}N_{1}\otimes M_{\bar{s}}(\KK)$ we write
\begin{eqnarray*}
M_{g_{i},g_{j}} & = & M_{\bar{s}^{(i)},\bar{s}^{(j)}}:=span\left\{ E_{k,l}\;\mid\; s_{k}\in G'g_{i},\; s_{l}\in G'g_{j}\right\} \\
A_{g_{i},g_{j}} & = & \KK^{\alpha}N_{1}\otimes M_{g_{i},g_{j}}
\end{eqnarray*}
an easy check shows that $M_{g_{i},g_{j}}M_{g_{k},g_{l}}=0$ if $j\neq k$
and $M_{g_{i},g_{j}}M_{g_{j},g_{l}}\subseteq M_{g_{i},g_{l}}$. We
also have $\deg(A_{g_{i},g_{j}})\subseteq g_{i}^{-1}G'g_{j}=g_{i}^{-1}g_{j}G'$
which is a coset of $G'$.

This gives the decompositions $M_{r_{1}}(\KK)=\bigoplus M_{g_{i},g_{j}}$
and $A=\bigoplus A_{g_{i},g_{j}}$ to block matrices. We assume that
$G$ is abelian (though it is enough to assume that $G'\trianglelefteq G$)
then the blocks on the diagonal have all degrees in $\deg(A_{g_{i},g_{i}})=g_{i}^{-1}G'g_{i}=G'$.
Suppose that we can find $h\in g_{i}^{-1}G'g_{j}\cap G'$ so there
is some $\tilde{h}\in G'$ such that 
\[
h=g_{i}^{-1}\tilde{h}g_{j}\quad\Rightarrow\quad hg_{i}=\tilde{h}g_{j}
\]
so $g_{i}$ and $g_{j}$ are in the same right coset of $G'$, hence
they are equal. This shows that $A_{G'}$ is exactly the diagonal
$\bigoplus_{1}^{n}A_{g_{i},g_{i}}$.

We now check how the rest of the cosets of $G'$ sit inside of $A$.
Notice that 
\[
\deg(A_{g_{i},g_{j}})\cap\deg(A_{g_{i},g_{k}})\subseteq g_{i}^{-1}G'g_{j}\cap g_{i}^{-1}G'g_{k}=g_{i}^{-1}\left(G'g_{j}\cap G'g_{k}\right)
\]
which is empty unless $g_{j}=g_{k}$, so in particular a coset of
$G'$ can be in at most one block on the $i$'th row, and similarly
for columns.\\

Suppose now that $N_{1}N_{2}=G$ where $N_{1},N_{2}\leq G$, $A=\KK^{\alpha}N_{1}\otimes M_{\bar{s}}(\KK)$
, $B=\KK^{\beta}N_{2}\otimes M_{\bar{t}}(\KK)$ , $\bar{s}\in N_{2}^{r_{1}},\;\bar{t}\in N_{1}^{r_{2}}$.
We want to see how $A_{N_{2}}$ (respectively $B_{N_{1}}$) sits inside
$A$ (respectively $B$). Suppose that $\deg_{A}\left(V_{h}\otimes E_{i,j}\right)=s_{i}^{-1}hs_{j}\in N_{2}$
for some $h\in N_{1},\; s_{i},s_{j}\in N_{2}$, then $h\in s_{i}N_{2}s_{j}^{-1}=N_{2}$,
and therefore $h\in N_{1}\cap N_{2}$. On the other hand, if $h\in N_{1}\cap N_{2}$
and $s_{i},s_{j}\in N_{2}$ then $\deg_{A}\left(V_{h}\otimes E_{i,j}\right)\in N_{2}$,
and we see that $A_{N_{2}}=\KK^{\alpha\mid_{H}}H\otimes M_{\bar{s}}(\KK)$
where $H=N_{1}\cap N_{2}$. A similar argument shows that $B_{N_{1}}=\KK^{\beta\mid_{H}}H\otimes M_{\bar{t}}(\KK)$.

To help visualize the structure of $A$ (and $B$) think of it as
a three dimensional cube, where the face in the front is $M_{\bar{s}}(\KK)$
and as we move on the axis perpendicular to it we change the coefficient
in $\KK^{\alpha}N_{1}$, then $A$ (and $B$) will look like

\begin{center}
\includegraphics[scale=0.55]{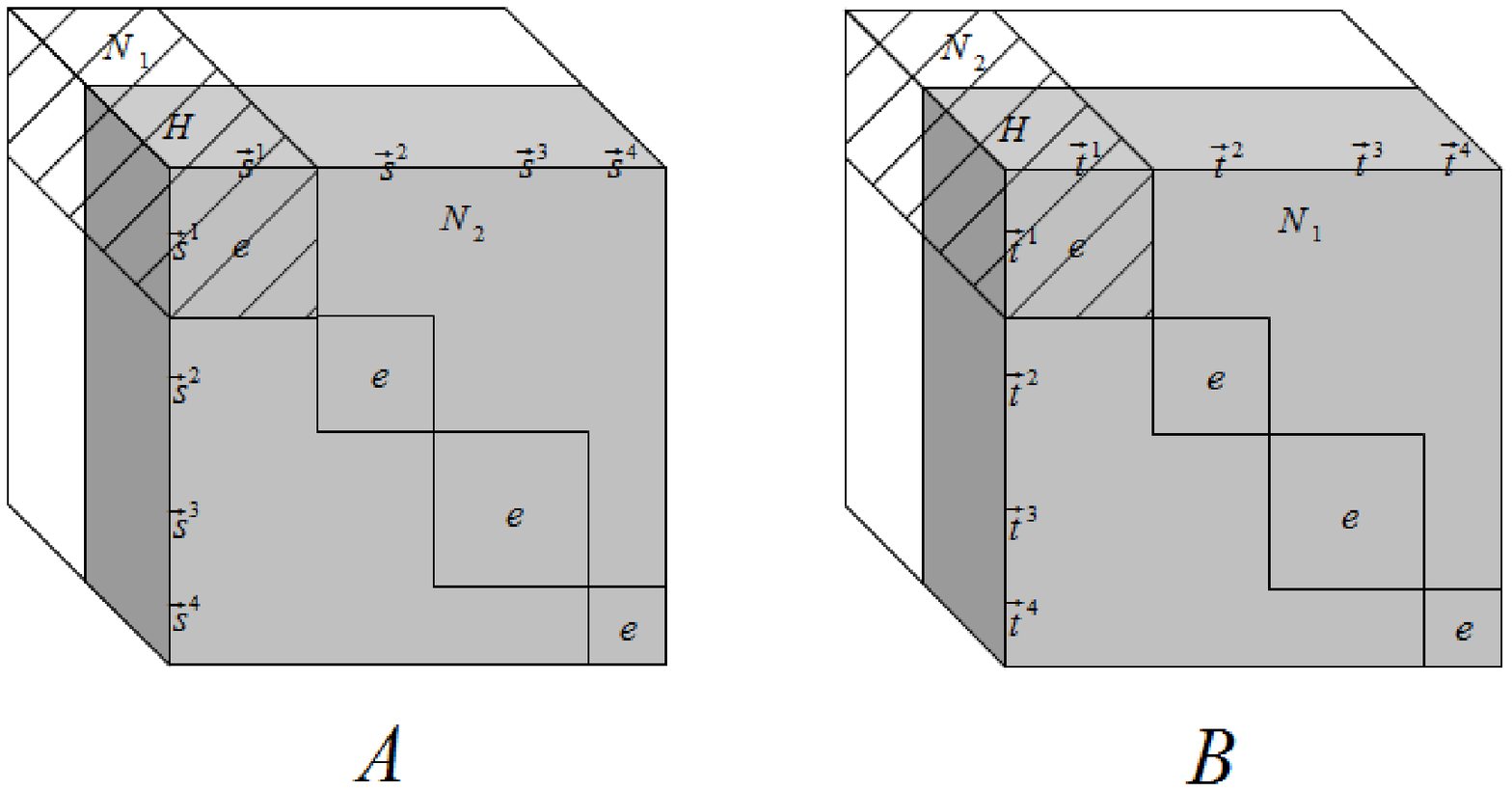}
\par\end{center}

In the last part, we had only the gray part in $A$, and only the
striped part in $B$. To extend our embedding we need to find where
to send the rest of $N_{1}$. If $\gamma$ is a $2$-cocycle on $G$
and the tuple $\bar{g}=\left(g_{1},...,g_{n}\right)$ has all the
elements in $G$ exactly once, then there is a natural way to embed
$\KK^{\gamma}G$ inside $M_{\bar{g}}(\KK)$ using the right regular
representation on $\KK^{\gamma}G$. The extension here is done in
a similar manner, where we think of $\nicefrac{N_{1}}{H}$ as inside
$\KK^{\alpha}N_{1}$ acting by right multiplication on $A$ (with
the exact definitions inside the theorem). In this case $\nicefrac{N_{1}}{H}$
will sit inside $M_{\bar{g}}(\KK)$ where $\bar{g}\in N_{1}^{\left[N_{1}:H\right]}$
is a tuple of coset representatives of $N_{1}$ in $H$ which is also
a tuple of coset representatives of $N_{2}$ in $G=N_{1}N_{2}$. We
denote by $\overline{G:N_{2}}$ a tuple of coset representatives of
$N_{2}$ in $G$. Notice that up to equivalence modulo $N_{2}$, all
such tuples are the same.
\begin{thm}
Let $G$ be a finite abelian group, $N_{1},N_{2}\leq G$ such that
$N_{1}N_{2}=G$ , $\alpha\in Z^{2}(N_{1},\KK^{\times})$, $\beta\in Z^{2}(N_{2},\KK^{\times})$
and $\bar{s}\in N_{2}^{r_{1}},\;\bar{t}\in N_{1}^{r_{2}}$. Denote
$H=N_{1}\cap N_{2}$, $\alpha'=\alpha\mid_{H}$ , $\beta'=\beta\mid_{H}$
and let $d$ be the dimension of the smallest representation of $\KK^{\alpha'/\beta'}H$.
Define $A=\KK^{\alpha}N_{1}\otimes M_{\bar{s}}(\KK)$ and $B=\KK^{\beta}N_{2}\otimes M_{\bar{t}}(\KK)$
then the following are equal
\begin{enumerate}
\item There is a graded embedding $A\hookrightarrow B$
\item $Id_{G}(B)\subseteq Id_{G}(A)$
\item $\bar{t}\succsim_{N_{2}}\bar{d}\times\bar{s}\times\overline{G:N_{2}}$.
\end{enumerate}
\end{thm}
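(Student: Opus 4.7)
The implication $(1)\Rightarrow(2)$ is immediate, so the content lies in $(3)\Rightarrow(1)$ (which I would attack constructively) and $(2)\Rightarrow(3)$ (which I would attack by restricting the identity inclusion to the $N_2$-subgrading and appealing to the theorem of the previous subsection).

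For $(3)\Rightarrow(1)$, the $\sim_{N_2}$-equivalence leaves $B$ unchanged up to graded isomorphism, so I may assume $\bar{t}=\bar{d}\times\bar{s}\times\overline{G:N_2}$. By $N_1N_2=G$ and the second isomorphism theorem, the representatives $\overline{G:N_2}$ can be chosen inside $N_1$, where they simultaneously enumerate the $H$-cosets of $N_1$. The smallest-representation lemma applied to $\KK^{\alpha'/\beta'}H$ gives a graded embedding $\iota_{0}\colon \KK^{\alpha'/\beta'}H\hookrightarrow \KK H\otimes M_d(\KK)$, and the $\beta'$-envelope lifts it to $\iota\colon \KK^{\alpha'}H\hookrightarrow \KK^{\beta'}H\otimes M_d(\KK)$. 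The right regular action of $\KK^{\alpha}N_1$ on the basis indexed by the chosen coset representatives produces a graded embedding $\rho\colon \KK^{\alpha}N_1\hookrightarrow \KK^{\alpha'}H\otimes M_{\overline{G:N_2}}(\KK)$. Composing $\iota$ with $\rho$, using the natural graded inclusion $\KK^{\beta'}H\hookrightarrow \KK^{\beta}N_2$, and finally tensoring with $M_{\bar{s}}(\KK)$ assembles the desired embedding $A\hookrightarrow B$; the degree check reduces to $\deg(E_{i,j})=\pi_i^{-1}\pi_j$ together with the bijection $N_1/H\cong G/N_2$.

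For $(2)\Rightarrow(3)$, I would first extend $\beta\in Z^2(N_2,\KK^\times)$ to some $\tilde{\beta}\in Z^2(G,\KK^\times)$ (possible because $G$ is finite abelian and $N_2$ admits a complement) and apply the $\tilde{\beta}^{-1}$-envelope to both sides; this reduces to the case $\beta\equiv 1$ on $N_2$ while replacing $\alpha$ by $\alpha/\tilde{\beta}|_{N_1}$, whose restriction to $H$ is still $\alpha'/\beta'$ so the invariant $d$ is unchanged. Restricting the identity inclusion to the $N_2$-subgrading, $A_{N_2}=\KK^{\alpha'}H\otimes M_{\bar{s}}(\KK)$ is $N_2$-simple by Bahturin--Sehgal--Zaicev, while $B_{N_2}=\bigoplus_k B_{\tau_k,\tau_k}$ decomposes along the $H$-cosets of $N_1$ represented in $\bar{t}$, with $B_{\tau_k,\tau_k}\cong_{N_2}\KK N_2\otimes M_{r_k}(\KK)$. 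Primality of $Id_{N_2}(A_{N_2})$ (a consequence of $N_2$-simplicity) forces some $Id_{N_2}(B_{\tau_{k_0},\tau_{k_0}})\subseteq Id_{N_2}(A_{N_2})$, and the preceding theorem yields $r_{k_0}\geq dr_1$.

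The main obstacle will be bootstrapping from ``some $k_0$'' to ``every $k$'', since condition $(3)$ demands the bound on each block while shifts via Lemma~5(3) only permute the multiset $\{r_k\}$. My plan is to repeat the restriction argument for each non-trivial coset $\nu N_2$ with $\nu\in\overline{G:N_2}$, enlarging the subgrading to $\langle N_2,\nu\rangle$ (or using a suitably coset-shifted envelope) so that the restriction intertwines the block pairs $B_{\tau_k,\tau_{\sigma_\nu(k)}}$ simultaneously; another application of the preceding theorem then extracts the per-block inequality $r_k\geq dr_1$ for every $k$. The abelian hypothesis enters essentially here to ensure that the induced permutations $\sigma_\nu$ are compatible with a common choice of coset representatives.
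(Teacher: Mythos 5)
Your direction $(3)\Rightarrow(1)$ is essentially the paper's construction: choose the coset representatives $\overline{G:N_{2}}$ inside $N_{1}$ (where they also represent the $H$-cosets of $N_{1}$), embed $\KK^{\alpha'}H$ into $\KK^{\beta'}H\otimes M_{d}(\KK)$ via Part 1 and the envelope, and let $\KK^{\alpha}N_{1}$ act by the right regular representation on the representatives. The paper's map $\phi(V_{g}\otimes c)=\sum_{w_{i}}\alpha(w_{i},g)\varphi(V_{h_{w_{i},g}}\otimes c)\otimes E_{w_{i},w_{i}^{g}}$ is exactly your composition, and the only content you omit is the cocycle bookkeeping (normalizing $\alpha(h,w_{i})=1$) needed to check multiplicativity. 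That part is fine as a sketch.

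The gap is in $(2)\Rightarrow(3)$, precisely at the point you flag. Restricting to the $N_{2}$-subgrading and invoking primality of $Id_{N_{2}}(A_{N_{2}})$ only gives the inequality $r_{k_{0}}\geq dr_{1}$ for \emph{one} diagonal block, while condition $(3)$ needs it for every block. Your proposed bootstrap --- enlarging the subgrading to $\langle N_{2},\nu\rangle$ and ``applying the preceding theorem again'' --- does not go through as stated: the subalgebra $B_{\langle N_{2},\nu\rangle}$ is not of the form $\KK^{\beta}N_{2}\otimes M_{r}(\KK)$ with trivially graded matrix part, so the Part 1 theorem does not apply to it, and nothing in your sketch extracts a per-block bound from the off-diagonal blocks it contains. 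The paper avoids the ``some $k_{0}$'' step entirely: assuming some $f\in Id_{N_{2}}(B_{w_{i_{0}}})\setminus Id_{N_{2}}(A_{N_{2}})$, it forms $f_{j}=x_{w_{j}^{-1}w_{i_{0}}}\cdot f\cdot y_{w_{i_{0}}^{-1}w_{j}}$ and $\tilde{f}=\prod_{j}(f_{j}\cdot z_{N_{2},j})$. The sandwiching degrees $w_{j}^{-1}w_{i_{0}}$ lie in $N_{1}$, so in $A$ they can be evaluated on the invertible elements $V_{w_{j}^{-1}w_{i_{0}}}\otimes I$ and $\tilde{f}\notin Id_{G}(A)$ by \prettyref{lem:product in simple algebras}; in $B$ they shift the vanishing diagonal block from position $w_{i_{0}}$ to position $w_{j}$, so every diagonal block of any evaluation of $\tilde{f}$ vanishes and $\tilde{f}\in Id_{G}(B)$ --- a contradiction. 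This yields $Id_{N_{2}}(B_{w_{i}})\subseteq Id_{N_{2}}(A_{N_{2}})$ for \emph{every} $i$, which is the idea your argument is missing. A further, smaller, problem: your opening reduction extends $\beta$ from $N_{2}$ to $G$, but a subgroup of a finite abelian group need not have a complement and the restriction $H^{2}(G,\KK^{\times})\rightarrow H^{2}(N_{2},\KK^{\times})$ need not be surjective (e.g.\ $G=(\ZZ/4)^{2}$, $N_{2}=2G$); this step is in any case unnecessary, since Part 1 already handles a nontrivial cocycle on $N_{2}$.
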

\begin{proof}
The part $(1)\Rightarrow(2)$ is obvious.

Assume now that $Id_{G}(B)\subseteq Id_{G}(A)$. Let $T=\left\{ w_{1},...,w_{n}\right\} \subseteq N_{1}$
a set of coset representatives of $N_{2}$ in $G=N_{1}N_{2}$ (containing
the elements of $\bar{t}$) and denote $B_{w_{i}}=\KK^{\beta}N_{2}\otimes M_{w_{i},w_{i}}\subseteq B_{N_{2}}$
and $A'=\KK^{\alpha'}H\otimes M_{\bar{s}}(\KK)=A_{N_{2}}$. In the
figure above, $A'$ is the gray part, while the $B_{w_{i}}$ are the
block on the diagonal. 

The idea that we use here and in the next part is that if $B$ has
$n\times n$ blocks and $a_{1},...,a_{n}$ are diagonal block matrices
such that the $i$'th block of $a_{i}$ is zero then $\prod a_{i}=0$.
To use this we will try to find polynomials $f_{i}$, such that for
each assignment in $B$, the $i$'th block will be zero, and since
$B_{N_{2}}=\bigoplus_{1}^{n}B_{w_{i}}$ this is equivalent to $f_{i}\in Id_{N_{2}}(B_{w_{i}})$. 

Suppose now that $a$ is a block diagonal matrix with $0$ on the
$i$ block. Let $b,c$ be two block matrices such that the only non-zero
blocks on the $\tilde{i}$'th column (as a block matrix) of $b$ is
in location $(i,\tilde{i})$ and the only non-zero block on the $\tilde{i}$'th
row of $c$ is on the $(\tilde{i},i)$ location then 
\[
(cab)_{\tilde{i},\tilde{i}}=\sum_{j,k}c_{\tilde{i},j}a_{j,k}b_{k,\tilde{i}}=c_{\tilde{i},i}a_{i,i}b_{i,\tilde{i}}=0
\]
Let $g\in G$ be fixed and $b\in B_{gN_{2}}$. The degrees on the
$i$'th row are $w_{i}^{-1}N_{2}w_{j}$ where $1\leq j\leq n$ so
$b$ can have a non-zero block only in location $(i,j)$ where $w_{i}^{-1}w_{j}N_{2}=gN_{2}\;\iff\; w_{j}\in w_{i}gN_{2}$.
By the remark before the theorem, $b$'s only non-zero block on the
$j$ column can be only in the $(i,j)$ location. A similar argument
is true for $c\in B_{g^{-1}N_{2}}$, and if $a\in A_{e}$ is a diagonal
block matrix with zero $i$ block then $cab\in A_{e}$ is again a
block diagonal matrix with a zero $j$ block, so we can move the zero
block from the $i$'th location to the $j$'th location. This is true
for any such $b\in B_{gN_{2}}$ and $c\in B_{g^{-1}N_{2}}$ so we
can extend this argument to the level of identities.

Back to the proof, we know that $Id_{G}(B)\subseteq Id_{G}(A)$, so
in particular we have
\[
\bigcap Id_{N_{2}}(B_{w_{i}})=Id_{N_{2}}\left(\bigoplus B_{w_{i}}\right)=Id_{N_{2}}(B_{N_{2}})\subseteq Id_{N_{2}}(A_{N_{2}})=Id_{N_{2}}(A')
\]
We want to show that not only the intersection of $Id_{N_{2}}(B_{w_{i}})$
is in $Id_{N_{2}}(A')$, but for each $i$ there is an inclusion $Id_{N_{2}}(B_{w_{i}})\subseteq Id_{N_{2}}(A')$.
Suppose by negation that there is some $1\leq i_{0}\leq n$, $f\in Id_{N_{2}}(B_{w_{i_{0}}})$
such that $f\notin Id_{N_{2}}(A')$. Since $B_{N_{2}}=\bigoplus_{1}^{n}B_{w_{i}}$
are the block diagonal matrices of $B$, then for any assignment of
$B$ in $f$ the $w_{i_{0}}$ block will be zero. Let $f_{j}=x_{w_{j}^{-1}w_{i_{0}}}\cdot f\cdot y_{w_{i_{0}}^{-1}w_{j}}$
then for any assignment of $B$ in $f_{j}$ the $w_{j}$ block on
the diagonal will be zero, and so $\tilde{f}=\prod_{1}^{n}\left(f_{j}\cdot z_{N_{2},j}\right)$
will be an identity of $B$ because each element of $B_{N_{2}}$ is
a diagonal block matrix, and each block on the diagonal will be zero. 

On the other hand $w_{j}^{-1}w_{i_{0}}\in N_{1}$ for each $j$ and
$V_{w_{j}^{-1}w_{i_{0}}}\otimes I\in A_{w_{j}^{-1}w_{i_{0}}}$ are
invertibles so $f_{j}\notin Id_{G}(A)$ for every $j$. For each $1\leq a,b\leq\left|s\right|$
and $h\in H$ we have $\deg(V_{h}\otimes E_{a,b})\in N_{2}$ so we
can find an assignment for $\prod_{1}^{n}f_{j}\cdot z_{N_{2},j}$
in $A$ that is not zero, hence $\tilde{f}\notin Id_{G}(A)$, and
we get a contradiction because $Id_{G}(B)\subseteq Id_{G}(A)$.

From the contradiction we know that $Id_{N_{2}}(B_{w_{i}})\subseteq Id_{N_{2}}(A')$
for each $i$. Write $\bar{t}=\bar{t}^{(1)}+\cdots+\bar{t}^{(n)}$
where the elements of $\bar{t}^{(i)}$ are in $N_{2}w_{i}$ . Notice
that in order to use the previous part we needed that the elements
of $\bar{t}^{(i)}$ are all $e$, or up to equivalence modulo $N_{2}$
all the elements are in $N_{2}$. Taking $\bar{t}^{(i)}w_{i}^{-1}$
will satisfy this condition, and since $G$ is abelian we have $B_{w_{i}}=\KK^{\beta}N_{2}\otimes M_{\bar{t}^{(i)}}(\KK)\cong_{G}\KK^{\beta}N_{2}\otimes M_{\bar{t}^{(i)}w_{i}^{-1}}(\KK)$.
We now use the previous part to show that $\left|\bar{t}^{(i)}\right|=\left|\bar{t}^{(i)}w_{i}^{-1}\right|\geq\left|\bar{s}\right|\cdot d$
for each $i$ therefore 
\begin{eqnarray*}
\bar{t}^{(i)}w_{i}^{-1}\succsim_{N_{2}}\bar{s}\times\bar{d}\quad & \Rightarrow & \quad\bar{t}^{(i)}\succsim_{N_{2}}\bar{s}\times\bar{d}\times\left(w_{i}\right)\\
\bar{t}=\bar{t}^{(1)}+\cdots+\bar{t}^{(n)}\succsim_{N_{2}}\sum_{i}\bar{s}\times\bar{d}\times\left(w_{i}\right) & = & \bar{s}\times\bar{d}\times\sum_{i}\left(w_{i}\right)=\bar{d}\times\bar{s}\times\overline{G:N_{2}}
\end{eqnarray*}
 so $(2)\Rightarrow(3)$.\\

Suppose that condition $(3)$ is true, then wlog we can assume that
the tuple $\bar{t}$ is $\bar{d}\times\bar{s}\times\overline{G:N_{2}}$
or in other words $M_{\bar{t}}(\KK)\cong M_{d}(\KK)\otimes M_{\bar{s}}(\KK)\otimes M_{\overline{G:N_{2}}}(\KK)$.

As we mentioned in the remark before the theorem, we now think of
$\KK^{\alpha}N_{1}\otimes M_{\bar{s}}(\KK)$ as a $\left[G:N_{2}\right]$
copies of $\KK^{\alpha'}H\otimes M_{\bar{s}}(\KK)$, and then the
right multiplication in $\KK^{\alpha}N_{1}\otimes M_{\bar{s}}(\KK)$
permutes these copies (that are mapped to the diagonal block matrices
in $B$) and also act on each copy alone. 
\[
N_{1}=N_{1}\cap N_{1}N_{2}=N_{1}\cap\left(\uplus N_{2}w_{i}\right)=\uplus\left(N_{1}\cap N_{2}w_{i}\right)=\uplus\left(N_{1}\cap N_{2}\right)w_{i}
\]
so $T$ is also a set of representatives of $H=N_{1}\cap N_{2}$ in
$N_{1}$. 

We first give a notation. Let $w_{i}\in T$ then for every $g\in N_{1}$
there are unique $h\in H$ and $w_{j}\in T$ such that $w_{i}g=hw_{j}$.
Denote these $h,w_{j}$ by $h:=h_{w_{i},g}$ and $w_{j}=w_{i}^{g}$
(in particular $w_{i}^{e}=w_{i}$ for all $i$) then 
\begin{eqnarray*}
w_{i}(g_{1}g_{2}) & = & h_{w_{i},g_{1}g_{2}}w_{i}^{g_{1}g_{2}}\\
\left(w_{i}g_{1}\right)g_{2} & = & h_{w_{i},g_{1}}w_{i}^{g_{1}}g_{2}=h_{w_{i},g_{1}}h_{w_{i}^{g_{1}},g_{2}}(w_{i}^{g_{1}})^{g_{2}}\\
 & \Rightarrow & h_{w_{i},g_{1}}h_{w_{i}^{g_{1}},g_{2}}=h_{i,g_{1}g_{2}}\qquad w_{i}^{g_{1}g_{2}}=(w_{i}^{g_{1}})^{g_{2}}
\end{eqnarray*}
and in particular $G\rightarrow Aut(T)$ defined above is a homomorphism.
Notice that for $N_{1}$ abelian and $g\in H$ we get that $w_{i}g=gw_{i}$
so $w_{i}^{g}=w_{i}$ and $h_{w_{i},g}=g$ for any $w_{i}\in T$.

We would like to view $N_{1}$ as a group that acts on itself where
{}``itself'' is $\sqcup Hw_{i}$. Each element in $N_{1}$ can be
written uniquely as $hw_{i}$ where $w_{i}\in T$ and $h\in H$. To
make calculations easier, we choose a new basis for $\KK^{\alpha}N_{1}$,
such that $V_{w_{i}}$ can be any choice, except $V_{e}=1$, and $V_{h}$
can be any choice for $h\in H\backslash\left\{ e\right\} $ and then
for each $h\in H$ and $w_{i}\in T$ we let $V_{h}V_{w_{i}}=V_{hw_{i}}$,
or in other words $\alpha(h,w_{i})=1$ for every $h\in H$ and $w_{i}\in T$. 

For $w_{i}\in T,\; h\in H$ , $g\in N_{1}$ and $b,c\in M_{\bar{s}}(\KK)$
we have
\begin{eqnarray*}
\left(V_{h}V_{w_{i}}\otimes b\right)\left(V_{g}\otimes c\right) & = & \alpha(w_{i},g)V_{h}V_{w_{i}g}\otimes bc=\alpha(w_{i},g)V_{h}V_{h_{w_{i},g}w_{i}^{g}}\otimes bc=\alpha(w_{i},g)\left(V_{h}V_{h_{w_{i},g}}\otimes bc\right)\left(V_{w_{i}^{g}}\otimes1\right)
\end{eqnarray*}
From the previous part, we have a graded embedding $\varphi:\KK^{\alpha'}H\otimes M_{\bar{s}}(\KK)\rightarrow\KK^{\beta}N_{2}\otimes M_{\left|s\right|d}(\KK)$.
We now define a mapping 
\begin{eqnarray*}
\phi:\KK^{\alpha}N_{1}\otimes M_{\bar{s}}(\KK) & \rightarrow & \KK^{\beta}N_{2}\otimes M_{\bar{t}}(\KK)\cong\KK^{\beta}N_{2}\otimes M_{d\left|s\right|}(\KK)\otimes M_{\overline{G:N_{2}}}(\KK)\\
\phi(V_{g}\otimes c) & = & {\displaystyle \sum_{w_{i}\in T}}\alpha(w_{i},g)\varphi\left(V_{h_{w_{i},g}}\otimes c\right)\otimes E_{w_{i},w_{i}^{g}}
\end{eqnarray*}
where we identify in the $M_{\overline{G:N_{2}}}(\KK)$ part the set
of indices $\left\{ 1,...,\left[G:N_{2}\right]\right\} $ with the
set $T$ of coset representatives. $\varphi$ is graded so the degree
of each summand is 
\[
w_{i}^{-1}h_{w_{i},g}\deg_{A}(c)w_{i}^{g}=w_{i}^{-1}w_{i}g\cdot\deg_{A}(c)=g\cdot\deg_{A}(c)=\deg_{A}(V_{g}\otimes c)
\]
hence $\phi$ is graded. The linear extension of this map is a vector
space homomorphism and we want it to be also multiplicative. We write
$V_{g}$ for $V_{g}\otimes I$ and then
\begin{eqnarray*}
\phi(V_{g_{1}})\phi(V_{g_{2}}) & = & \left({\displaystyle \sum_{w_{i}\in T}}\alpha(w_{i},g_{1})\varphi\left(V_{h_{w_{i},g_{1}}}\right)\otimes E_{w_{i},w_{i}^{g_{1}}}\right)\left({\displaystyle \sum_{w_{j}\in T}}\alpha(w_{j},g_{2})\varphi\left(V_{h_{w_{j},g_{2}}}\right)\otimes E_{w_{j},w_{j}^{g_{2}}}\right)\\
 & = & {\displaystyle \sum_{w_{i}\in T}}\alpha(w_{i},g_{1})\alpha(w_{i}^{g_{1}},g_{2})\varphi\left(V_{h_{w_{i},g_{1}}}\right)\varphi\left(V_{h_{w_{i}^{g_{1}},g_{2}}}\right)\otimes E_{w_{i},\left(w_{i}^{g_{1}}\right)^{g_{2}}}\\
 & = & {\displaystyle \sum_{w_{i}\in T}}\alpha(w_{i},g_{1})\alpha(w_{i}^{g_{1}},g_{2})\alpha(h_{w_{i},g_{1}},h_{w_{i}^{g_{1}},g_{2}})\varphi\left(V_{h_{w_{i},g_{1}g_{2}}}\right)\otimes E_{w_{i},w_{i}^{g_{1}g_{2}}}=(*)
\end{eqnarray*}
We now use the 2-cocycle property of $\alpha$ together with $\alpha(h,w_{i})=1$
for all $h\in H$ and $w_{i}\in T$ to get 
\begin{eqnarray*}
\alpha(hw_{j},g) & = & \alpha(hw_{j},g)\alpha(h,w_{j})=\alpha(h,w_{j}g)\alpha(w_{j},g)=\alpha(h,h_{w_{j},g}w_{j}^{g})\alpha(w_{j},g)\\
\alpha(h,h_{w_{j},g}w_{j}^{g}) & = & \alpha(h,h_{w_{j},g}w_{j}^{g})\alpha(h_{w_{j},g},w_{j}^{g})=\alpha(h,h_{w_{j},g})\alpha(hh_{w_{j},g},w_{j}^{g})=\alpha(h,h_{w_{j},g})\\
\Rightarrow\alpha(h,h_{w_{j},g})\alpha(w_{j},g) & = & \alpha(hw_{j},g)\\
\Rightarrow\alpha(h_{w_{i},g_{1}},h_{w_{i}^{g_{1}},g_{2}})\alpha(w_{i}^{g_{1}},g_{2}) & = & \alpha(h_{w_{i},g_{1}}w_{i}^{g_{1}},g_{2})=\alpha(w_{i}g_{1},g_{2})
\end{eqnarray*}
\begin{eqnarray*}
(*) & = & {\displaystyle {\displaystyle \sum_{w_{i}\in T}}}\alpha(w_{i},g_{1})\alpha(w_{i}g_{1},g_{2})\varphi\left(V_{h_{w_{i},g_{1}g_{2}}}\right)\otimes E_{w_{i},w_{i}^{g_{1}g_{2}}}=\alpha(g_{1},g_{2}){\displaystyle \sum_{w_{i}\in T}}\alpha(w_{i},g_{1}g_{2})\varphi\left(V_{h_{w_{i},g_{1}g_{2}}}\right)\otimes E_{w_{i},w_{i}^{g_{1}g_{2}}}\\
 & = & \alpha(g_{1},g_{2})\phi(V_{g_{1}g_{2}})=\phi(V_{g_{1}}V_{g_{2}})
\end{eqnarray*}
From the definition of $\phi$ we get that $\phi(V_{g}\otimes c)=\phi(V_{g})\left(\varphi(1\otimes c)\otimes I\right)=\left(\varphi(1\otimes c)\otimes I\right)\phi(V_{g})$
(using the multiplicativity of $\varphi$) and then
\begin{eqnarray*}
\phi(V_{g_{1}}\otimes c_{1})\phi(V_{g_{2}}\otimes c_{2}) & = & \phi(V_{g_{1}})\varphi(1\otimes c_{1})\phi(V_{g_{2}})\varphi(1\otimes c_{2})=\phi(V_{g_{1}})\phi(V_{g_{2}})\varphi(1\otimes c_{1})\varphi(1\otimes c_{2})\\
 & = & \phi(V_{g_{1}}V_{g_{2}})\varphi(1\otimes c_{1}c_{2})=\phi(V_{g_{1}}V_{g_{2}}\otimes c_{1}c_{2})
\end{eqnarray*}
so $\phi$ is multiplicative.

$\phi$ is a non-zero graded algebra homomorphism from $A$ to $B$
and $A$ is $G$-simple so $\phi$ is a graded embedding, and this
proves part $(3)\Rightarrow(1)$ and completes the theorem. Notice
that if we view $B$ as 
\[
B\cong\KK^{\beta}N_{2}\otimes M_{d}(\KK)\otimes M_{\left|\bar{s}\right|}(\KK)\otimes M_{\overline{G:N_{2}}}(\KK)\cong\KK^{\beta}N_{2}\otimes M_{d}(\KK)\otimes M_{\bar{s}}(\KK)\otimes M_{\overline{G:N_{2}}}(\KK)
\]
 and $\iota:\KK^{\alpha'}H\rightarrow\KK^{\beta'}H\otimes M_{d}(\KK)$
the graded embedding from step 1 then
\begin{eqnarray*}
\phi(V_{g}\otimes c) & = & {\displaystyle \sum_{w_{i}\in T}}\alpha(w_{i},g)\varphi\left(V_{h_{w_{i},g}}\otimes c\right)\otimes E_{w_{i},w_{i}^{g}}={\displaystyle \sum_{w_{i}\in T}}\alpha(w_{i},g)\iota(V_{h_{w_{i},g}})\otimes c\otimes E_{w_{i},w_{i}^{g}}\\
\phi(V_{e}\otimes c) & = & {\displaystyle \sum_{w_{i}\in T}}\alpha(w_{i},e)\varphi\left(V_{h_{w_{i},e}}\otimes c\right)\otimes E_{w_{i},w_{i}^{e}}={\displaystyle \sum_{w_{i}\in T}}\varphi\left(V_{e}\otimes c\right)\otimes E_{w_{i},w_{i}}=\iota(V_{e})\otimes c\otimes I
\end{eqnarray*}
 
\end{proof}

\subsection{Part 3 - $A=\KK^{\alpha}N_{1}\otimes M_{\bar{s}}(\KK)$ , $B=\KK^{\beta}N_{2}\otimes M_{\bar{t}}(\KK)$ }

As we did in the previous part, we are going to think of $A$ and
$B$ as block matrices, such that $G'=N_{1}N_{2}$ will be the blocks
on the diagonal. We will then try to match each such block of degree
$G'$ in $A$ to some block with degree $G'$ in $B$ and then {}``extend''
the embedding to all of $A$.
\begin{thm}
\label{thm:Main Theorem}Let $G$ be an abelian group, $N_{1},N_{2}\leq G$
be finite subgroups of $G$ , $\alpha\in Z^{2}(N_{1},\KK^{\times})$,
$\beta\in Z^{2}(N_{2},\KK^{\times})$ and $\bar{s}\in G^{r_{1}},\;\bar{t}\in G^{r_{2}}$.
Let $H=N_{1}\cap N_{2}$, $\alpha'=\alpha\mid_{H}$ , $\beta'=\beta\mid_{H}$
and let $d$ be the dimension of the smallest representation of $\KK^{\alpha'/\beta'}H$,
then the following conditions are equal for $A=\KK^{\alpha}N_{1}\otimes M_{\bar{s}}(\KK)$
and $B=\KK^{\beta}N_{2}\otimes M_{\bar{t}}(\KK)$
\begin{enumerate}
\item There is a graded embedding $A\hookrightarrow B$
\item $Id_{G}(B)\subseteq Id_{G}(A)$
\item $\exists g\in G,\quad$$g\bar{t}\succsim_{N_{2}}d\times\overline{G':N_{2}}\times\tilde{s}$
where $\bar{s}\sim_{N_{1}}\tilde{s}$.
\end{enumerate}
\end{thm}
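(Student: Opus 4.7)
The direction $(1) \Rightarrow (2)$ is immediate. Both non-trivial implications proceed by block-decomposing $A$ and $B$ along the right cosets of $G' := N_1N_2$ in $G$, and then reducing the problem on each block to Part 2. As in the discussion preceding Part 2, write $\bar{s} = \bar{s}^{(1)} + \cdots + \bar{s}^{(n)}$ and $\bar{t} = \bar{t}^{(1)} + \cdots + \bar{t}^{(m)}$ with each part contained in a single right $G'$-coset. The sub-algebras $A_{G'}$ and $B_{G'}$ are the corresponding block-diagonals, and after left-multiplying each part by a suitable coset representative (Lemma 1.3 part 3, plus $G$ abelian so no cocycle change), each diagonal block becomes a $G'$-graded algebra of the form $\KK^\alpha N_1 \otimes M_{\bar{s}^{(i)}_0}(\KK)$ with $\bar{s}^{(i)}_0 \in (N_1N_2)^{|\bar{s}^{(i)}|}$, to which Part 2 applies directly.

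For $(3) \Rightarrow (1)$: using Lemma 1.3 part 3 we absorb the $g$ and assume $\bar{t}$ contains $\bar{d} \times \overline{G':N_2} \times \tilde{s}$ as an honest sub-tuple; since $\tilde{s} \sim_{N_1} \bar{s}$, replacing $\tilde{s}$ by $\bar{s}$ leaves $A$ graded-isomorphic, so WLOG $\tilde{s} = \bar{s}$. The tuple $\bar{d} \times \overline{G':N_2} \times \bar{s}$ has the same $G/G'$-coset profile as $\bar{s}$ (with each coset inflated by the factor $d \cdot [G':N_2]$), so the corresponding sub-algebra of $B$ has a block structure indexed by the same cosets $G'\hat{g}_i$ as $A$. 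On each matching pair of blocks we invoke the explicit embedding constructed in the proof of Part 2. The key point is that the Part 2 formula at the end of its proof is built from a shared right-regular representation of $\KK^\alpha N_1$ on the $\overline{G':N_2}$-space and a shared embedding $\iota:\KK^{\alpha'}H\hookrightarrow \KK^{\beta'}H\otimes M_d(\KK)$, so the block embeddings glue into a single graded homomorphism $\phi:A\to B$; the off-diagonal pieces of $A$ (of degree outside $G'$) are then handled automatically, because their images land in the off-diagonal pieces of $B$ with the correct degree-shift $g$. Since $A$ is $G$-simple, $\phi$ is an embedding.

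For $(2) \Rightarrow (3)$: we use the block-zeroing trick from Part 2, now with $G'$ playing the role of $G$ there and with $G \setminus G'$ providing the off-diagonal degrees needed to move zero blocks around. Suppose $f \in Id_{G'}(B^{(j)})$; then, as in Part 2, surrounding $f$ by monomials in indeterminates of degree lying in appropriate cosets of $G'$ produces an identity of all of $B$. If the $G'$-cosets supporting $\bar{s}$ are not contained, up to a uniform translation by some $g\in G$, in those supporting $\bar{t}$, then no evaluation in $A$ of these surrounding indeterminates can be nonzero, and one obtains a contradiction to $Id_G(B)\subseteq Id_G(A)$. This simultaneously produces the global shift $g$ of condition (3) and the block-wise inclusions $Id_{G'}(B^{(j_i)}) \subseteq Id_{G'}(A^{(i)})$. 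Applying Part 2 within each matched pair and assembling the resulting sub-tuple inequalities produces the desired $g\bar{t} \succsim_{N_2} \bar{d} \times \overline{G':N_2} \times \tilde{s}$, with the $N_1$-adjustment within each block absorbed into the final $\tilde{s} \sim_{N_1} \bar{s}$.

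The main obstacle is the coset-matching step in $(2)\Rightarrow(3)$: one must carefully design the surrounding monomials in the block-zeroing argument so that they annihilate simultaneously every non-selected block of $B$ while remaining evaluable and nonzero on every block of $A$. This is the place where the two-level coset structure $N_2 \leq G' \leq G$ enters nontrivially (Part 2 had only one level, since $G'=G$ there), and it is precisely what forces the existence of a single global $g$ rather than independent shifts on each block.
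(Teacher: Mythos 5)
Your plan follows the paper's proof essentially step for step: block decomposition of $A$ and $B$ along right cosets of $G'=N_{1}N_{2}$, the block-zeroing polynomial argument in $(2)\Rightarrow(3)$ that extracts both an injective matching of diagonal blocks and the single global shift $g$, and reduction of each matched pair of blocks to Part 2. For $(3)\Rightarrow(1)$ the paper packages your ``gluing'' into the single formula $\phi(V_{g}\otimes E_{i,j})=\phi_{1}(V_{g})\otimes E_{i,j}$, where $\phi_{1}:\KK^{\alpha}N_{1}\hookrightarrow\KK^{\beta}N_{2}\otimes M_{d\left[G':N_{2}\right]}(\KK)$ is the Part 2 embedding of the twisted group algebra alone; this makes the cross-block compatibility and the off-diagonal degree check you assert ``automatic'' into a one-line verification.
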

\begin{proof}
The part $(1)\Rightarrow(2)$ is obvious.

Suppose that $(3)$ is true so we can assume wlog that $\bar{t}=d\times\overline{G':N_{2}}\times\bar{s}$
and therefore $B=\KK^{\beta}N_{2}\otimes M_{d}(\KK)\otimes M_{\overline{G':N_{2}}}(\KK)\otimes M_{\bar{s}}(\KK)$.
From the previous part, we can find a graded embedding $\phi_{1}:\KK^{\alpha}N_{1}\rightarrow\KK^{\beta}N_{2}\otimes M_{d}(\KK)\otimes M_{\overline{N_{1}N_{2}:N_{2}}}(\KK)$
. Define the map $\phi:A\rightarrow B$ by
\[
\phi(V_{g}\otimes E_{i,j})=\phi_{1}(V_{g})\otimes E_{i,j}
\]
and extend linearly. This is a graded homomorphism since
\begin{eqnarray*}
\deg_{B}\left(\phi_{1}(V_{g})\otimes E_{i,j}\right) & = & s_{i}^{-1}\deg\left(\phi_{1}(V_{g})\right)s_{j}=s_{i}^{-1}gs_{j}=\deg_{A}(V_{g}\otimes E_{i,j})
\end{eqnarray*}
where we used the fact that $\phi_{1}$ is graded.
\begin{eqnarray*}
\phi(V_{g_{1}}\otimes E_{i_{1},j_{1}})\phi(V_{g_{2}}\otimes E_{i_{2},j_{2}}) & = & \left(\phi_{1}(V_{g_{1}})\otimes E_{i_{1},j_{1}}\right)\cdot\left(\phi_{1}(V_{g_{2}})\otimes E_{i_{2},j_{2}}\right)=\delta_{j_{1},i_{2}}\left(\phi_{1}(V_{g_{1}})\phi_{1}(V_{g_{2}})\otimes E_{i_{1},j_{2}}\right)\\
 & = & \delta_{j_{1},i_{2}}\alpha(g_{1},g_{2})\left(\phi_{1}(V_{g_{1}g_{2}})\otimes E_{i_{1},j_{2}}\right)=\delta_{j_{1},i_{2}}\alpha(g_{1},g_{2})\phi\left(V_{g_{1}g_{2}}\otimes E_{i_{1},j_{2}}\right)\\
 & = & \phi\left(\left(V_{g_{1}}\otimes E_{i_{1},j_{1}}\right)\left(V_{g_{2}}\otimes E_{i_{2},j_{2}}\right)\right)
\end{eqnarray*}
$\phi$ is a non-zero graded algebra homomorphism and $A$ is $G$-simple
so it is a graded embedding.\\

Suppose now that condition $(2)$ is true - $Id_{G}(B)\subseteq Id_{G}(A)$.
Decompose $\bar{s}$ to $\bar{s}=\left(\bar{s}^{(1)},...,\bar{s}^{(n)}\right)$
such that the elements of $\bar{s}^{(i)}$ (non-empty tuple) are in
$G'u_{i}$ and $U=\left\{ u_{1},...,u_{n}\right\} $ are different
coset representatives of $G'$ in $G$, and define $\bar{t}=\left(\bar{t}^{(1)},...,\bar{t}^{(m)}\right)$
and $V=\left\{ v_{1},...,v_{m}\right\} $ similarly. We write 
\begin{eqnarray*}
A_{i} & := & \KK^{\alpha}N_{1}\otimes M_{\bar{s}^{(i)},\bar{s}^{(i)}}\cong_{G}\KK^{\alpha}N_{1}\otimes M_{\bar{s}^{(i)}u_{i}^{-1},\bar{s}^{(i)}u_{i}^{-1}}\\
B_{i} & := & \KK^{\beta}N_{2}\otimes M_{\bar{t}^{(i)},\bar{t}^{(i)}}\cong_{G}\KK^{\beta}N_{2}\otimes M_{\bar{t}^{(i)}v_{i}^{-1},\bar{t}^{(i)}v_{i}^{-1}}
\end{eqnarray*}
and we want to show that $m\geq n$ and up to an injective function
$\sigma:\left[m\right]\rightarrow\left[n\right]$ we have $Id_{G'}(B_{\sigma(i)})\subseteq Id_{G'}(A_{i})$.
Notice that $A_{G'}=\bigoplus A_{i}$ and $B_{G'}=\bigoplus B_{j}$.

For each $1\leq i\leq n$ and $1\leq j\leq m$ if $Id_{G'}(B_{j})\not\subseteq Id_{G'}(A_{i})$
then choose some polynomial $f_{i,j}\in Id_{G'}(B_{j})\backslash Id_{G'}(A_{i})$.
For $i$ fixed define $f_{i}=x_{G',0}\cdot\prod_{j}\left[f_{i,j}\cdot x_{G',j}\right]$,
then for $1\leq j_{0}\leq m$ if $Id_{G'}(B_{j_{0}})\not\subseteq Id_{G'}(A_{i})$
then $f_{i,j_{0}}$ appears in this product, and therefore, for every
assignment of $B$ in $f_{i}$, the $j_{0}$ block on the diagonal
will be zero. In other words, if there is an assignment of $f_{i}$
such that the $j_{0}$ block is not zero then $Id_{G'}(B_{j})\subseteq Id_{G'}(A_{i})$.

Define 
\[
f=\prod_{i=1}^{n}y_{u_{1}^{-1}u_{i}G',i}\cdot f_{i}\cdot z_{u_{i}^{-1}u_{1}G',i}
\]

For each $i,j$, the polynomial $f_{i,j}\notin Id_{G'}(A_{i})$ and
$A_{i}$ is simple so $f_{i}\notin Id_{G'}(A_{i})$ (\prettyref{lem:product in simple algebras})
and therefore there is an assignment $\bar{a}_{i}$ for $f_{i}$ such
that the $u_{i}$ block on the diagonal is not zero. An assignment
in $y_{u_{1}^{-1}u_{i}G',i}\cdot f_{i}\cdot z_{u_{i}^{-1}u_{1}G',i}$
from $A$ will move the $u_{i}$ block on the diagonal to the $u_{1}$
block, and since 
\begin{eqnarray*}
\deg\left(V_{e}\otimes M_{u_{1},u_{i}}\right) & = & u_{1}u_{i}^{-1},\qquad\deg\left(V_{e}\otimes M_{u_{j},u_{1}}\right)=u_{i}^{-1}u_{1}\\
M_{u_{1},u_{i}} & = & M_{u_{1},u_{1}}M_{u_{1},u_{i}}
\end{eqnarray*}
then $M_{u_{1},u_{i}}f(\bar{a}_{i})M_{u_{i},u_{1}}$ has a non-zero
$u_{1}$ block and then
\[
\prod_{1}^{n}\left[M_{u_{1},u_{1}}\left(M_{u_{1},u_{i}}f(\bar{a}_{i})M_{u_{i},u_{1}}\right)\right]
\]
still has a non-zero $u_{1}$ block so $f\notin Id_{G}(A)$ and therefore
$f\notin Id_{G}(B)$.

A general assignment of $B$ in $y_{u_{1}^{-1}u_{i}G',i}\cdot f_{i}\cdot z_{u_{i}^{-1}u_{1}G',i}$
moves the $v_{j}$ block after the assignment in $f_{i}$ to $v_{j_{0}}$
block where $v_{j_{0}}^{-1}v_{j}G'=u_{1}^{-1}u_{i}G'$ if there is
such a $j_{0}$, and other wise this block goes to zero. $f\notin Id_{G}(B)$
so there is a block $v_{j_{0}}$ which is not zero for some assignment
in $f$. Let $\sigma(i)$ be the index such that $v_{\sigma(i)}G'=u_{1}^{-1}u_{i}v_{j_{0}}G'$,
then from the remark above the $\sigma(i)$ block is not zero in the
assignment of $f_{i}$, so $Id_{G'}(B_{\sigma(i)})\subseteq Id_{G'}(A_{i})$.
Notice that 
\[
\sigma(i_{1})=\sigma(i_{2})\;\Rightarrow\; u_{1}^{-1}u_{i_{1}}v_{j_{0}}G'=u_{1}^{-1}u_{i_{2}}v_{j_{0}}G'\;\Rightarrow\; u_{i_{1}}G'=u_{i_{2}}G'\;\Rightarrow\; i_{1}=i_{2}
\]
because the $u_{i}$ are different coset representatives, so $\sigma$
is injective and wlog we can assume that $\sigma(i)=i$. We now move
to a graded sub algebra of $B$ by looking only on the sub tuple $\bar{t}=(\bar{t}^{(1)},...,\bar{t}^{(n)})$.
Let $g\in G$ such that $v_{1}g=u_{1}$ then 
\begin{eqnarray*}
v_{1}G' & = & u_{1}^{-1}u_{1}v_{j_{0}}G'\;\Rightarrow\; v_{1}G'=v_{j_{0}}G'\;\Rightarrow\; v_{j_{0}}=v_{1}\\
v_{i}G' & = & u_{1}^{-1}u_{i}v_{1}G'=u_{i}g^{-1}G'
\end{eqnarray*}
so by multiplying $\bar{t}$ by $g$ we see that the elements of $g\bar{t}^{(i)}$
and $\bar{s}^{(i)}$ are in in the same coset $u_{i}G'$. 

$g\bar{t}^{(i)}u_{i}^{-1}$ and $\bar{s}^{(i)}u_{i}^{-1}$ are in
$G'=N_{1}N_{2}$ and so we can find $g\bar{t}^{(i)}u_{i}^{-1}\sim_{N_{2}}\tilde{t}^{(i)}$
and $\bar{s}^{(i)}u_{i}^{-1}\sim_{N_{1}}\tilde{s}^{(i)}$ such that
the elements of $\tilde{t}^{(i)}$ are in $N_{1}$ and the elements
of $\tilde{s}^{(i)}$ are in $N_{2}$. From the previous part, since
$Id_{G'}(B_{i})\subseteq Id_{G'}(A_{i})$ we get
\[
g\bar{t}^{(i)}u_{i}^{-1}\sim_{N_{2}}\tilde{t}^{(i)}\succsim_{N_{2}}\bar{d}\times\tilde{s}^{(i)}\times\overline{G':N_{2}}
\]
\begin{eqnarray*}
g\bar{t} & = & g\bar{t}^{(1)}+\cdots+g\bar{t}^{(n)}=g\bar{t}^{(1)}u_{1}^{-1}u_{1}+\cdots+g\bar{t}^{(n)}u_{n}^{-1}u_{n}\\
 & \succsim_{N_{2}} & \left(\bar{d}\times\tilde{s}^{(1)}\times\overline{G':N_{2}}\right)u_{1}+\cdots+\left(\bar{d}\times\tilde{s}^{(n)}\times\overline{G':N_{2}}\right)u_{n}=\bar{d}\times(\tilde{s}^{(1)}u_{1}+\cdots\tilde{s}^{(n)}u_{n})\times\overline{G':N_{2}}\\
\bar{s} & = & (\bar{s}^{(1)}+\cdots\bar{s}^{(n)})=(\bar{s}^{(1)}u_{1}^{-1}u_{1}+\cdots\bar{s}^{(n)}u_{n}^{-1}u_{n})\sim_{N_{1}}\left(\tilde{s}^{(1)}u_{1}+\cdots+\tilde{s}^{(n)}u_{n}\right)
\end{eqnarray*}
so $(2)\Rightarrow(3)$, and the theorem is proved.
\end{proof}
By the structure theorem of finite dimensional $G$-simple algebras,
all such algebras are of the form $\KK^{\alpha}H\otimes M_{\bar{s}}(\KK)$
where $H\leq G$ and $\bar{s}\in G^{r}$ for some $r$ so we have:
\begin{thm}
Let $A,B$ be two finite dimensional $G$-simple algebra, where $G$
is an abelian group, then there is a graded embedding $A\hookrightarrow B$
iff $Id_{G}(B)\subseteq Id_{G}(A)$.
\end{thm}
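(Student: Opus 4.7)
The plan is to deduce this statement directly from the Main Theorem (Theorem~\ref{thm:Main Theorem}) together with the structure theorem of Bahturin, Sehgal and Zaicev (Theorem~2.3). The forward direction is immediate: any graded embedding $A\hookrightarrow_{G}B$ forces any graded identity of $B$ to vanish on $A$, hence $Id_{G}(B)\subseteq Id_{G}(A)$. So the real content lies in the converse.

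For the converse, I would first invoke the structure theorem for finite dimensional $G$-simple algebras over an algebraically closed field of characteristic zero. This produces finite subgroups $N_{1},N_{2}\leq G$, cocycles $\alpha\in Z^{2}(N_{1},\KK^{\times})$ and $\beta\in Z^{2}(N_{2},\KK^{\times})$, and tuples $\bar{s}\in G^{r_{1}}$, $\bar{t}\in G^{r_{2}}$ such that $A\cong_{G}\KK^{\alpha}N_{1}\otimes M_{\bar{s}}(\KK)$ and $B\cong_{G}\KK^{\beta}N_{2}\otimes M_{\bar{t}}(\KK)$. Graded isomorphism preserves both graded identities and the existence of graded embeddings, so after this reduction $A$ and $B$ are exactly in the form required by Theorem~\ref{thm:Main Theorem}.

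At this point I would simply apply Theorem~\ref{thm:Main Theorem}: the assumption $Id_{G}(B)\subseteq Id_{G}(A)$ is condition~(2) of that theorem, and condition~(1) gives the desired graded embedding $A\hookrightarrow_{G}B$. Hence both directions hold and the statement follows.

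There is no real obstacle left at this stage, since the heavy lifting — the combinatorial criterion on the tuples $\bar{s},\bar{t}$ in terms of the smallest representation of $\KK^{\alpha'/\beta'}H$ with $H=N_{1}\cap N_{2}$, and the explicit construction of the embedding using the right multiplication action of coset representatives together with the $\alpha$-envelope machinery of Lemma~\ref{lem:cocycle envelope} and Theorem~\ref{thm:Envelope simple} — has already been carried out in the three preceding subsections. The only subtle point worth emphasizing, were this written out in detail, is that the structure theorem is only determined up to the equivalences of Lemma~3.1, but those are precisely the moves built into the relation $\sim_{H}$ and $\succsim_{H}$ appearing in condition~(3) of Theorem~\ref{thm:Main Theorem}, so the reduction is unambiguous.
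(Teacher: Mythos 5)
Your proposal is correct and follows exactly the paper's route: the theorem is deduced immediately from Theorem~\ref{thm:Main Theorem} by writing $A$ and $B$ in the form $\KK^{\alpha}N_{1}\otimes M_{\bar{s}}(\KK)$ and $\KK^{\beta}N_{2}\otimes M_{\bar{t}}(\KK)$ via the Bahturin--Sehgal--Zaicev structure theorem. No further comment is needed.
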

\newpage{}

\section{The Non-Abelian Case}

We concentrate now on a special case of \prettyref{thm:Main Theorem}
- the one where $N_{2}=\left\{ e\right\} $. The theorem shows that
if $A=\KK^{\alpha}H\otimes M_{\bar{s}}(\KK)$ and $B=M_{\bar{t}}(\KK)$
where $G$ is abelian, then there is a graded embedding $A\hookrightarrow B$
iff $Id_{G}(B)\subseteq Id_{G}(A)$ iff there exists $g\in G$ such
that $g\cdot\bar{t}\succsim\bar{H}\times\tilde{s},\;\tilde{s}\sim_{H}\bar{s}$
where $\bar{H}$ is a tuple such that each element in $H$ appears
in $\bar{H}$ exactly once. We now show that in this case we can omit
the condition of commutativity.
\begin{thm}
Let $A=\KK^{\alpha}H\otimes M_{\bar{s}}(\KK)$, $B=M_{\bar{t}}(\KK)$
be $G$-graded algebras, $G$ arbitrary, then the following are equal
\begin{enumerate}
\item There is a graded embedding $A\hookrightarrow B$
\item $Id_{G}(B)\subseteq Id_{G}(A)$
\item $\exists g\in G$, s.t. $g\cdot\bar{t}\succsim\overline{H}\times\tilde{s}$
and $\tilde{s}\sim_{H}\bar{s}$.
\end{enumerate}
\end{thm}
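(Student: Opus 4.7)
The implication $(1) \Rightarrow (2)$ is immediate: a graded embedding pulls graded identities of $B$ back to graded identities of $A$.

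For $(3) \Rightarrow (1)$, the plan is to construct the embedding explicitly. Since $M_{\bar t}(\KK)$ depends only on the ratios $t_i^{-1} t_j$, replacing $\bar t$ by $g\bar t$ yields the same graded algebra, and by Lemma 3.1 replacing $\bar s$ by $\tilde s$ gives an isomorphic $A$. Thus we may assume that $\bar t$ contains $\bar H \times \bar s$ as a sub-tuple, so $B$ contains $M_{\bar H \times \bar s}(\KK) \cong_G M_{\bar H}(\KK) \otimes M_{\bar s}(\KK)$ as a graded subalgebra. I then define the twisted right regular representation
\[
\rho: \KK^\alpha H \to M_{\bar H}(\KK), \qquad \rho(V_h) = \sum_{h' \in H} \alpha(h', h)\, E_{h',\, h'h},
\]
where the rows and columns of $M_{\bar H}(\KK)$ are indexed by $H$ with weights given by $H$ itself. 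Each $E_{h', h'h}$ has degree $(h')^{-1}(h'h) = h$, which requires no commutativity, and multiplicativity of $\rho$ follows from the 2-cocycle identity
\[
\alpha(h', h_1)\,\alpha(h' h_1, h_2) = \alpha(h_1, h_2)\,\alpha(h', h_1 h_2).
\]
Then $\Phi(V_h \otimes E_{i,j}) = \rho(V_h) \otimes E_{i,j}$ is a nonzero graded algebra homomorphism $A \to B$, hence an embedding by $G$-simplicity of $A$.

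For the difficult implication $(2) \Rightarrow (3)$, the plan is to mirror the argument of Theorem 3.11, exploiting the fact that $B$ has the trivial cocycle. The key step is the block decomposition: I partition $\bar t = \bar t_{[1]} + \cdots + \bar t_{[m]}$ by left $H$-cosets, so that the $H$-graded subalgebra $B_H = \bigoplus_p M_{\bar t_{[p]}}(\KK)$ is block diagonal, and each block is $H$-graded-isomorphic to some $M_{\bar h_{[p]}}(\KK)$ recording the $H$-component of $\bar t_{[p]}$. Partitioning $\bar s$ by double $H$-cosets of $G$ gives the analogous block decomposition of $A_H$. Restricting to indeterminates whose degrees lie in $H$ turns the hypothesis into $Id_H(B_H) \subseteq Id_H(A_H)$. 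Following Theorem 3.11, for any pair of blocks $(i, p)$ with $Id_H(B_{[p]}) \not\subseteq Id_H(A_i)$ I would choose a separating multilinear polynomial $f_{i,p}$; assembling the $f_{i,p}$ into a polynomial $f$ that vanishes on every evaluation in $B$ (since $B_H$ is block-diagonal) but, using $G$-simplicity of $A$ and multiplication by off-$H$-degree generators to shuffle non-vanishing blocks, can be shown not to vanish on $A$, yields a contradiction. This produces an injective matching of $A$-blocks into $B$-blocks with the required block-level identity containment, and applying the already established $(3) \Rightarrow (1)$ direction together with a dimension count on each matched pair gives the tuple inequality $g\bar t \succsim \bar H \times \tilde s$ with $\tilde s \sim_H \bar s$.

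The main technical obstacle is precisely the failure of the clean block structure from the abelian proof. In the abelian case $\deg A_{i,j} \subseteq u_i^{-1} u_j H$, so $A_H$ is block diagonal already after partitioning $\bar s$ by right $H$-cosets; here we only get $\deg A_{i,j} \subseteq u_i^{-1} H u_j$, which contributes $H$-degrees whenever $u_i$ and $u_j$ lie in the same double $H$-coset, so the block-diagonal picture for $A_H$ only persists after the coarser double-coset partition. I expect the main subtleties to lie in reconciling this double-coset decomposition of $A_H$ with the left-coset decomposition of $B_H$ when running the separating-polynomial argument, and in verifying that the dimensions line up to give precisely $\bar H \times \tilde s$ rather than a weaker tuple.
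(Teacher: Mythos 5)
Your $(1)\Rightarrow(2)$ and $(3)\Rightarrow(1)$ are correct and essentially identical to the paper's argument: the twisted right regular representation $V_{h}\mapsto\sum_{h'\in H}\alpha(h',h)E_{h',h'h}$, tensored with the identity on the $M_{\bar{s}}(\KK)$ factor, is exactly the embedding used there, and your verification of its gradedness and multiplicativity is sound.

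The gap is in $(2)\Rightarrow(3)$, which you leave as a plan with admittedly unresolved difficulties, and the plan as stated cannot produce the required tuple. An injective matching of the (double-coset) blocks of $A_{H}$ into the (left-coset) blocks of $B_{H}$ yields at most one block of $\bar{t}$ per block of $\bar{s}$, whereas the target $\bar{H}\times\tilde{s}$ demands, for each distinct right coset representative $s_{i}$ occurring in $\bar{s}$ with multiplicity $r_{i}$, a family of $\left|H\right|$ entries $g^{-1}hs_{i}$ (one for each $h\in H$), each with multiplicity at least $r_{i}$. That factor of $\left|H\right|$ has to be manufactured somewhere, and nothing in your block-matching produces it. The paper bypasses the coset decompositions entirely: normalize $\bar{s}\sim_{H}(s_{1}\times r_{1},\ldots,s_{n}\times r_{n})$ with the $s_{i}$ distinct right coset representatives, so that the degree-$e$ components of $A$ and $B$ are $\bigoplus_{i}M_{r_{i}}(\KK)$ and $\bigoplus_{j}M_{k_{j}}(\KK)$; for each pair $(i,h)$ with $1\leq i\leq n$ and $h\in H$ form $f_{i,h}=x_{s_{1}^{-1}hs_{i}}\cdot St_{2r_{i}-1}(z_{e,1},\ldots,z_{e,2r_{i}-1})\cdot y_{(s_{1}^{-1}hs_{i})^{-1}}$, which by Amitsur--Levitzki and simplicity of the $s_{1}$-block is not an identity of $A$, and multiply all the $f_{i,h}$ (with bridging degree-$e$ variables) into a single non-identity $f$ of $A$. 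Since then $f\notin Id_{G}(B)$, tracking which diagonal block of $B$ survives the product forces, for \emph{every} pair $(i,h)$, a block $t_{\sigma(i,h)}$ of $B$ with $t_{1}^{-1}t_{\sigma(i,h)}=s_{1}^{-1}hs_{i}$ and $k_{\sigma(i,h)}\geq r_{i}$; injectivity of $(i,h)\mapsto\sigma(i,h)$ follows because the $s_{i}$ are distinct coset representatives, and summing over all $(i,h)$ gives $\bar{t}\succsim t_{1}s_{1}^{-1}\left(\bar{H}\times\bar{s}\right)$. To salvage your route you would at minimum have to range over all of $H$ in the connecting variables, which is precisely what collapses your plan back into the paper's argument.
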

\begin{proof}
$(1)\Rightarrow(2)$ is obvious.

Suppose we have $Id_{G}(B)\subseteq Id_{G}(A)$. Write $\tilde{s}\sim_{H}\bar{s}=\left(s_{1}\times r_{1},....,s_{n}\times r_{n}\right)$
where $\left\{ s_{i}\right\} _{1}^{n}$ are distinct right coset representatives
of $H$. For each $1\leq i\leq n$ and each $h\in H$ we define 
\[
f_{i,h}=x_{s_{1}^{-1}hs_{i}}\cdot St_{2r_{i}-1}(z_{e,1},...,z_{e,2r_{i}-1})\cdot y_{(s_{1}^{-1}hs_{i})^{-1}}
\]
From the Amitsur-Levitzki theorem, there is an assignment for the
$z_{e,j}$ such that the $s_{i}$ block on the diagonal of $A$ is
not zero in $St_{2r_{i}-1}(z_{e,1},...,z_{e,2r_{i}-1})$, and by choosing
suitable assignment for $x_{s_{1}^{-1}hs_{i}}$ and $y_{(s_{1}^{-1}hs_{i})^{-1}}$
we see that the $s_{1}$ block on the diagonal is not zero in $f_{i,h}$.
Define
\[
f=\prod_{i=1}^{n}\;\prod_{h\in H}\; f_{i,h}\cdot x_{e,(i,h)}
\]
where each $f_{i,h}$ is considered with different indeterminates.
Since there is an assignment for each $f_{i,h}$ where the $s_{1}$
block is not zero, and it is isomorphic to $M_{r_{1}}(\KK)$ which
is simple, we get that $f$ is not an identity of $A$.

$Id_{G}(B)\subseteq Id_{G}(A)$ so $f\notin Id_{G}(B)$. $f$ has
degree $e$, so if $\bar{t}\sim(t_{1}\times k_{1},...,t_{m}\times k_{m})$
where the $\left\{ t_{i}\right\} _{1}^{m}$ are distinct, then there
is an assignment $\bar{b}$ for $f$ and some $j$ such that the $t_{j}$
block is not zero in $f(\bar{b})$ and by permuting the elements of
$\bar{t}$ we can assume that $j=1$. The $t_{1}$ block on the diagonal
of $f(\bar{b})$ is not zero, so it cannot be zero in $f_{i,h}(\bar{b})$
for each $i,h$, and this can be true only if the $t_{\sigma(i,h)}$
block on the diagonal of $St_{2r_{i}-1}(\bar{b})$ is not zero where
$t_{1}^{-1}t_{\sigma(i,h)}=s_{1}^{-1}hs_{i}$. By the Amitsur-Levitzki
theorem we get that $k_{\sigma(i,h)}\geq r_{i}$. Notice that $\sigma(i,h)$
is injective since
\[
\sigma(i,h)=\sigma(i',h')\;\Rightarrow\; s_{1}^{-1}hs_{i}=s_{1}^{-1}h's_{i'}\;\Rightarrow\; hs_{i}=h's_{i'}
\]
but the $s_{i}$ are distinct $H$ right coset representatives so
$s_{i}=s_{i'}\iff i=i'$ and $h=h'$. We therefore have
\[
\bar{t}\succsim\sum_{i}\sum_{h\in H}\left(t_{\sigma(i,h)}\times k_{\sigma(i,h)}\right)\succsim\sum_{i}\sum_{h\in H}\left(t_{1}s_{1}^{-1}hs_{i}\times r_{i}\right)=t_{1}s_{1}^{-1}\sum_{i}\sum_{h\in H}\left(hs_{i}\times r_{i}\right)=t_{1}s_{1}^{-1}\cdot\left(\bar{H}\times\bar{s}\right)
\]
and the required $g$ for $(3)$ is $s_{1}t_{1}^{-1}$. 

Suppose that $\left(3\right)$ is true, so wlog we can assume that
$\bar{t}=\bar{H}\times\bar{s}$. Let $\varphi:\KK^{\alpha}H\rightarrow M_{\overline{H}}(\KK)$
be the right regular representation defined by
\[
U_{h}\mapsto\sum_{h_{i}\in H}\alpha(h_{i},h)E_{h_{i},h_{i}h}
\]
 where we identify $\left\{ 1,...,\left|H\right|\right\} $ with $H$.
Define the algebra homomorphism $\psi:A\rightarrow B$ by $\psi(U_{h}\otimes E_{k,l})=\varphi(U_{h})\otimes E_{k,l}$.
\[
\deg\left(\psi(U_{h}\otimes E_{k,l})\right)=\deg\left(\varphi(U_{h})\otimes E_{k,l}\right)=s_{k}^{-1}\deg\left(\varphi(U_{h})\right)s_{l}=s_{k}^{-1}hs_{l}=\deg(U_{h}\otimes E_{k,l})
\]
so we get that $\psi$ is a graded homomorphism, and from the simplicity
of $A$, it is a graded embedding.
\end{proof}
We can now use again the structure theorem for finite $G$-simple
algebras to conclude
\begin{cor}
Let $G$ be an arbitrary group, $B=M_{\bar{s}}(\KK)$ where $\bar{s}\in G^{r}$
and $A$ a finite dimensional $G$-simple algebra then there is a
graded embedding $A\hookrightarrow B$ iff $Id_{G}(B)\subseteq Id_{G}(A)$.
\end{cor}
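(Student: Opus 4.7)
The plan is to reduce the corollary to the theorem proved immediately above by invoking the Bahturin--Sehgal--Zaicev structure theorem. Since $\KK$ is algebraically closed of characteristic zero and $A$ is a finite dimensional $G$-simple algebra, the structure theorem (stated in the preliminaries) supplies a finite subgroup $H\leq G$, a $2$-cocycle $\alpha\in Z^{2}(H,\KK^{\times})$, and a tuple $\bar{u}\in G^{n}$ such that
\[
A\cong_{G}\KK^{\alpha}H\otimes M_{\bar{u}}(\KK).
\]
In particular, $A$ is $G$-isomorphic to an algebra of exactly the form appearing as the left-hand side in the preceding theorem.

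With this reduction in hand, the equivalence is immediate. The hypothesis $B=M_{\bar{s}}(\KK)$ puts $B$ into the form required as the right-hand side of the preceding theorem (trivial cocycle, elementary grading by $\bar{s}$). Since $Id_{G}$ is an invariant of graded isomorphism, $Id_{G}(A)=Id_{G}(\KK^{\alpha}H\otimes M_{\bar{u}}(\KK))$, and similarly graded embeddings into $B$ correspond under the isomorphism. Thus the preceding theorem applies verbatim and yields that the existence of a graded embedding $A\hookrightarrow_{G}B$ is equivalent to $Id_{G}(B)\subseteq Id_{G}(A)$ (with the explicit combinatorial criterion $g\cdot\bar{s}\succsim \overline{H}\times\tilde{u}$ for some $g\in G$ and $\tilde{u}\sim_{H}\bar{u}$ as an intermediate description, though it is not needed for the statement).

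There is no real obstacle here beyond checking that the appeal to the structure theorem is legitimate in the ambient setting of the paper: namely, that $\KK$ algebraically closed with $\operatorname{char}(\KK)=0$ meets the hypotheses of that theorem, which it manifestly does. The nontrivial work is already contained in the preceding theorem; the corollary simply packages it so that the hypothesis on $A$ is phrased intrinsically (``finite dimensional $G$-simple'') rather than in terms of the tensor decomposition $\KK^{\alpha}H\otimes M_{\bar{u}}(\KK)$.
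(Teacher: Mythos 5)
Your proposal is correct and is exactly the paper's argument: the paper likewise derives the corollary by applying the Bahturin--Sehgal--Zaicev structure theorem to write $A\cong_{G}\KK^{\alpha}H\otimes M_{\bar{u}}(\KK)$ and then invoking the preceding theorem verbatim. Nothing further is needed.
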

\newpage{}

\section{Graded Embeddings of a Semisimple Algebra in a Finite Dimensional
Algebra}

In the previous section we showed that if $A,B$ are finite dimensional
$G$-simple algebras for $G$ abelian then there is a graded embedding
$A\hookrightarrow B$ iff $Id_{G}(B)\subseteq Id_{G}(A)$. 

We would first like to show that we cannot remove the simplicity of
$A$. One easy counter example is this - for any finite dimensional
$G$-graded algebra $B$ (simple or not), let $A=B\oplus B$ with
the grading $A_{g}=B_{g}\oplus B_{g}$ then $Id_{G}(A)=Id_{G}(B)\cap Id_{G}(B)=Id_{G}(B)$
and in particular we have $Id_{G}(B)\subseteq Id_{G}(A)$. On the
other hand there is no graded embedding $A\hookrightarrow B$ from
the simple fact that $\dim(A)=2\dim(B)>\dim(B)$. 

One might think that the theorem fails because $A$ contains two copies
of the same object, but even if $A$ is a direct sum of two non-isomorphic
simple algebras the theorem fails
\begin{example}
Let $B=M_{n+2}(\KK)$ be graded elementary by the group $G=\nicefrac{\ZZ}{10\ZZ}$
with the tuple $(0,1,1,....,1,3)$, then $B$ is $G$-simple. Let
$A_{1},A_{2}$ be two sub algebras of $B$ correspond to the sub tuple
$(0,1,1,...,1)$ and the sub tuple $(1,1,...,1,3)$ then we have $Id_{G}(B)\subseteq Id_{G}(A_{i})$
for $i=1,2$. $A_{i}$ are also $G$-simple and 
\[
Id_{G}(B)\subseteq Id_{G}(A_{1})\cap Id_{G}(A_{2})=Id_{G}(A_{1}\oplus A_{2})
\]
Let $A=A_{1}\oplus A_{2}$ and notice that $Id_{G}(A_{1})\neq Id_{G}(A_{2})$,
for example because $Supp_{G}(A_{1})=\left\{ 0,1,9\right\} $ while
$Supp_{G}(A_{2})=\left\{ 0,2,8\right\} $. 
\[
\dim(A_{1}\oplus A_{2})=\dim(A_{1})+\dim(A_{2})=2(1+n)^{2}>\left(2+n\right)^{2}=\dim(A)
\]
for large $n$, so it is not possible that $A=A_{1}\oplus A_{2}\hookrightarrow B$.\\

\end{example}
Notice that though in the last example we cannot embed $A$, which
is semisimple, in $B$, we can embed it in $B\oplus B$. One of the
results of this section is that this is always the case. 

We now remark that unless otherwise mentioned, there is no condition
on the field $\KK$ or the group $G$.\\

We first give an important property that simple algebras enjoy.

Suppose that $a,b\in M_{n}(\KK)$ then we can write 
\[
a=\sum a_{i,j}E_{i,j}\qquad b=\sum b_{i,j}E_{i,j}\qquad a_{i,j},b_{i,j}\in\KK
\]
If $a,b$ are not zero, then there are $i_{1},j_{1},i_{2},j_{2}\in\left\{ 1,...,n\right\} $
such that $a_{i_{1},j_{1}}\neq0$ and $b_{i_{2},j_{2}}\neq0$ so
\[
E_{1,i_{1}}\cdot a\cdot E_{j_{1},i_{2}}\cdot b\cdot E_{j_{2},1}=a_{i_{1},j_{1}}\cdot b_{i_{2},j_{2}}\cdot E_{1,1}\neq0\quad\Rightarrow\quad aE_{j_{1},i_{2}}b\neq0
\]
This condition, that if $a,b\neq0$ then we can find $x$ such that
$axb\neq0$ is true for any simple graded algebra and we state it
here.
\begin{lem}
\label{lem:product in simple algebras}Let $G$ be any group and $\KK$
any field. Let $A$ be a $G$-simple $\KK$ algebra
\begin{enumerate}
\item Suppose that $a,b\in A$ are non-zero homogeneous elements, then we
can find $x\in A$ homogeneous such that $axb\neq0$.
\item Let $f_{1},f_{2}\notin Id_{G}(A)$ homogeneous polynomials then there
is a homogeneous polynomial $h$ such that $f_{1}\cdot h\cdot f_{2}\notin Id_{G}(A)$.
\end{enumerate}
\end{lem}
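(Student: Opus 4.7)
The plan for part (1) is to combine $G$-simplicity with a homogeneity decomposition. The first step is to show that for any non-zero homogeneous $c \in A$, the graded two-sided ideal $AcA$ equals $A$. For this I would consider $I = \{c \in A : AcA = 0\}$, check that it is a graded two-sided ideal (two-sidedness is immediate, and gradedness follows because the condition $AcA = 0$ can be tested on homogeneous $a,b \in A$, and then the summands $a c_{g} b$ lie in distinct graded components), and rule out $I = A$ as follows: if $I = A$ then $A^{3} = 0$, while $A \cdot A$ is a non-zero graded two-sided ideal and therefore equals $A$ by $G$-simplicity; but then $A^{3} = A \cdot A^{2} = A \cdot A = A \neq 0$, a contradiction. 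Hence $I = 0$, so for every non-zero homogeneous $c$ the graded two-sided ideal $AcA$ is non-zero and therefore equal to $A$.

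Applying this to $a$ and $b$ gives $AaAbA = (AaA) \cdot b \cdot A = A \cdot b \cdot A = AbA = A \neq 0$, which forces $aAb \neq 0$ (otherwise the left-hand side would vanish). Pick any $x \in A$ with $axb \neq 0$; decomposing $x = \sum_{g \in G} x_{g}$ into homogeneous components, the summands $a x_{g} b$ land in distinct graded components $A_{\deg(a) \cdot g \cdot \deg(b)}$, so at least one is non-zero and the corresponding $x_{g}$ is the required homogeneous witness.

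For part (2), I would first rename the variables of $f_{2}$ so that they are disjoint from those of $f_{1}$; this is harmless since membership in $Id_{G}(A)$ depends only on the degrees of the indeterminates and not on their labels. Next, pick graded assignments $\bar{a}^{(1)}, \bar{a}^{(2)}$ with $f_{i}(\bar{a}^{(i)}) \neq 0$; each value is a non-zero homogeneous element of degree $\deg(f_{i})$. By part (1) there is a homogeneous $x \in A$ with $f_{1}(\bar{a}^{(1)}) \cdot x \cdot f_{2}(\bar{a}^{(2)}) \neq 0$. Letting $h$ be a fresh indeterminate of degree $\deg(x)$ and extending the assignment by $h \mapsto x$, the polynomial $f_{1} \cdot h \cdot f_{2}$ evaluates to this non-zero product, so it is not a graded identity of $A$.

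The main obstacle is the very first step: one has to verify carefully that $I = \{c : AcA = 0\}$ really is a graded two-sided ideal and that the case $I = A$ contradicts only the relatively weak hypothesis $A \cdot A \neq 0$ built into the definition of $G$-simplicity. Once that foundation is in place, the rest is a short bookkeeping exercise with the grading together with a standard variable-renaming argument in the free graded algebra.
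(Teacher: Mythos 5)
Your proof is correct and takes essentially the same route as the paper: part (1) is the standard simplicity argument via graded annihilator ideals combined with a decomposition into homogeneous components, and part (2) follows by evaluating on independent assignments (the paper simply declares it obvious). The only cosmetic difference is that you use the single ideal $\left\{ c\in A\;\mid\;AcA=0\right\}$ together with $A^{2}=A$, whereas the paper chains the two ideals $\left\{ a\;\mid\;aA=0\right\}$ and $\left\{ b\;\mid\;aAb=0\right\}$; both reduce to the hypothesis $A\cdot A\neq0$ in the same way.
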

\begin{proof}

\begin{enumerate}
\item Notice that $I=\left\{ a\in A\;\mid\; aA=0\right\} $ is a graded
ideal in $A$. Since $A^{2}\neq0$ then $I\neq A$, and from simplicity
$I=0$, or in other words, if $a\neq0$ then $aA\neq0$. For a nonzero
homogeneous element $a\in A$, the set $J=\left\{ b\in A\;\mid\; aAb=0\right\} $
is again a graded ideal in $A$. Find $0\neq a'\in aA$, so $a'A\neq0$,
and therefore $J\neq A$, and from simplicity $J=0$. If $a,b\neq0$
are homogeneous then $aAb\neq0$, and in particular we can find homogeneous
$x\in A$ such that $axb\neq0$.
\item Obvious from the first part of the lemma.
\end{enumerate}
\end{proof}
The algebras in this section are direct sums of simple algebra, so
we would like to see what it means to have an ideal of one such algebra
inside another. We recall again that $Id_{G}(\bigoplus_{1}^{n}B_{i})=\bigcap_{1}^{n}Id_{G}(B_{i})$
for any algebras $B_{i}$. In general, if $\bigcap_{1}^{n}Id_{G}(B_{i})\subseteq Id_{G}(A)$,
then it doesn't mean that there is some $i$ such that $Id_{G}(B_{i})\subseteq Id_{G}(A)$.
The next lemma shows that this is true, if we also assume that $A$
is simple.
\begin{lem}
\label{lem:intersection of algebras in simple}Suppose that $B_{1},...,B_{n}$
are $G$-graded algebras and $A$ is a $G$-simple algebra such that
$Id_{G}(\bigoplus B_{i})=\bigcap Id_{G}(B_{i})\subseteq Id_{G}(A)$,
then there is $i\in\left[n\right]$ such that $Id_{G}(B_{i})\subseteq Id_{G}(A)$.\end{lem}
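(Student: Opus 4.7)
I would prove the lemma by contrapositive (or equivalently, by contradiction): assuming that $Id_G(B_i) \not\subseteq Id_G(A)$ for every $i$, I would construct a single polynomial that lies in $\bigcap_{i=1}^n Id_G(B_i)$ but not in $Id_G(A)$, contradicting the hypothesis.

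The plan is the following. For each $i$, pick some $f_i \in Id_G(B_i) \setminus Id_G(A)$. Since both $Id_G(B_i)$ and $Id_G(A)$ are graded ideals, the homogeneous components of $f_i$ all lie in $Id_G(B_i)$, but at least one of them fails to lie in $Id_G(A)$; replacing $f_i$ by that homogeneous component, I may assume that each $f_i$ is homogeneous. By relabeling indeterminates, I also arrange that the $f_i$ involve pairwise disjoint sets of variables — this is harmless because renaming variables preserves both membership in $Id_G(B_i)$ and non-membership in $Id_G(A)$.

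Next I splice the $f_i$ together using Lemma~\ref{lem:product in simple algebras}(2). Starting from $f_1$ and $f_2$, which are homogeneous and not identities of $A$, the lemma produces a homogeneous polynomial $h_1$ (which I may also take to use fresh variables) such that $f_1 \cdot h_1 \cdot f_2 \notin Id_G(A)$; note that this polynomial is again homogeneous. Iterating the lemma with $f_1 h_1 f_2$ and $f_3$, then with $f_1 h_1 f_2 h_2 f_3$ and $f_4$, and so on, I obtain homogeneous $h_1,\ldots,h_{n-1}$ (with fresh variables at each step) so that the polynomial
\[
f \;=\; f_1 \cdot h_1 \cdot f_2 \cdot h_2 \cdots h_{n-1} \cdot f_n
\]
is not in $Id_G(A)$. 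On the other hand, for every fixed $i$ the polynomial $f$ has $f_i$ as a two-sided factor, and $Id_G(B_i)$ is a two-sided $T$-ideal containing $f_i$, so $f \in Id_G(B_i)$. Hence $f \in \bigcap_i Id_G(B_i) \subseteq Id_G(A)$, contradicting $f \notin Id_G(A)$.

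The main obstacle — and really the only nontrivial step — is the iterative splicing: one has to know that whenever $f_1,f_2 \notin Id_G(A)$ are homogeneous, one can insert a homogeneous polynomial between them so that the product is still not an identity of $A$. This is exactly what Lemma~\ref{lem:product in simple algebras}(2) provides, and it is the place where the hypothesis that $A$ is $G$-simple is used (through the graded version of the fact that $aXb \neq 0$ for nonzero homogeneous $a,b$). Once that tool is available, the rest is essentially bookkeeping about graded ideals and fresh variables.
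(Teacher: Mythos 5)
Your proof is correct and follows essentially the same route as the paper: assume no single $Id_G(B_i)$ is contained in $Id_G(A)$, pick homogeneous $f_i\in Id_G(B_i)\setminus Id_G(A)$, splice them with Lemma~\ref{lem:product in simple algebras}(2) into a product $f=\prod(f_i\cdot h_i)\notin Id_G(A)$, and observe that $f$ lies in each $Id_G(B_i)$ since these are two-sided ideals, contradicting the hypothesis. Your extra care about homogeneous components and fresh variables just makes explicit details the paper leaves implicit.
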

\begin{proof}
Suppose that the claim is false, then for each $i$ there is a homogeneous
polynomial $f_{i}\in Id_{G}(B_{i})\backslash Id_{G}(A)$, but then
from \prettyref{lem:product in simple algebras} we could find $h_{i}\in\FF\left\langle X_{G}\right\rangle $
such that $f=\prod\left(f_{i}\cdot h_{i}\right)\notin Id_{G}(A)$.
On the other hand $f\in Id_{G}(B_{i})$ for each $i$ so $f\in\bigcap Id_{G}(B_{i})\subseteq Id_{G}(A)$
and we get a contradiction.
\end{proof}
We now have a semisimple algebra $A=A_{1}\oplus\cdots\oplus A_{n}$
with $Id_{G}(A)=\bigcap Id_{G}(A_{i})$ so we might remove some of
the algebras $A_{i}$ without increasing the ideal of identities.
If $A_{1},A_{2}$ are $G$-graded and $Id_{G}(A_{1})\subseteq Id_{G}(A_{2})$,
then the ideal of identities cannot separate between $A_{1}\oplus A_{2}$
and $A_{1}$ because $Id_{G}(A_{1}\oplus A_{2})=Id_{G}(A_{1})\cap Id_{G}(A_{2})=Id_{G}(A_{1})$.
To avoid such cases we give the next definition
\begin{defn}
Let $A_{1},...,A_{n}$ be distinct $G$-graded algebras. We call the
set $\left\{ A_{i}\mid1\leq i\leq n\right\} $ minimal with respect
to identities (or just minimal) if for each $1\leq j\leq n$ we have
${\displaystyle \bigcap_{k=1}^{n}}Id_{G}(A_{k})\subsetneq{\displaystyle \bigcap_{k\neq j}}Id_{G}(A_{k})$.
\end{defn}
If there are $A_{i},A_{j}$ in the set $S$ of graded algebras such
that $Id_{G}(A_{i})\subseteq Id_{G}(A_{j})$ then $S$ is not minimal.
The other direction is not true in general, but if the algebras are
simple then the last lemma shows that it is true. If $S=\left\{ A_{1},...,A_{n}\right\} $
is a set of $G$-simple algebras that is not minimal, then wlog $\bigcap_{2}^{n}Id_{G}(A_{i})=\bigcap_{1}^{n}Id_{G}(A_{j})$
so $\bigcap_{2}^{n}Id_{G}(A_{i})\subseteq Id_{G}(A_{1})$, but then
\prettyref{lem:intersection of algebras in simple} shows that there
is $i\in\left\{ 2,...,n\right\} $ such that $Id_{G}(A_{i})\subseteq Id_{G}(A_{1})$.
In other words, the set $S$ is minimal iff the ideals $Id_{G}(A_{i})$
are all distinct and there are no inclusions $Id_{G}(A_{i})\subseteq Id_{G}(A_{j})$
for $i\neq j$.

We can now take a condition such as $Id_{G}(\bigoplus B_{j})\subseteq Id_{G}(\bigoplus A_{i})$
and decompose it into {}``smaller'' conditions about each $A_{i}$
separately.
\begin{lem}
Let $A_{1},...,A_{n}$ and $B_{1},...,B_{m}$ be $G$-simple algebras
and define $A=\bigoplus_{1}^{n}A_{i},\quad B=\bigoplus_{1}^{m}B_{j}$ 
\begin{enumerate}
\item If $Id_{G}(B)\subseteq Id_{G}(A)$ then there is a function $\tau:\left[n\right]\rightarrow\left[m\right]$
such that $Id_{G}(B_{\tau(i)})\subseteq Id_{G}(A_{i})$.
\item If $Id_{G}(A)=Id_{G}(B)$ and the sets $\left\{ A_{i}\right\} _{1}^{n}$
and $\left\{ B_{j}\right\} _{1}^{m}$ are minimal, then $m=n$ and
there is a permutation $\tau:\left[n\right]\rightarrow\left[n\right]$
such that $Id_{G}(A_{i})=Id_{G}(B_{\tau(i)})$
\end{enumerate}
\end{lem}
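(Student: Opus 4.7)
My plan is to deduce both parts directly from \prettyref{lem:intersection of algebras in simple} and the characterization of minimality that the excerpt already sets up.

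For part (1), I would argue as follows. Since $A=\bigoplus_{1}^{n} A_i$, for each fixed $i$ we trivially have $Id_G(A)=\bigcap_k Id_G(A_k)\subseteq Id_G(A_i)$, and by hypothesis $Id_G(B)=\bigcap_j Id_G(B_j)\subseteq Id_G(A)\subseteq Id_G(A_i)$. Because $A_i$ is $G$-simple, \prettyref{lem:intersection of algebras in simple} applies and produces some index $\tau(i)\in[m]$ with $Id_G(B_{\tau(i)})\subseteq Id_G(A_i)$. Choosing one such index for every $i$ defines the required function $\tau:[n]\to[m]$.

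For part (2), I would apply part (1) twice, in both directions of the equality $Id_G(A)=Id_G(B)$. This yields a function $\tau:[n]\to[m]$ with $Id_G(B_{\tau(i)})\subseteq Id_G(A_i)$ and a function $\sigma:[m]\to[n]$ with $Id_G(A_{\sigma(j)})\subseteq Id_G(B_j)$. Composing gives
\[
Id_G(A_{\sigma(\tau(i))})\subseteq Id_G(B_{\tau(i)})\subseteq Id_G(A_i)
\]
for every $i$, and symmetrically $Id_G(B_{\tau(\sigma(j))})\subseteq Id_G(B_j)$. Here is where the minimality hypothesis is essential: as explained in the paragraph immediately preceding the lemma, a set of $G$-simple algebras is minimal iff no ideal of identities is contained in another. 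Therefore the inclusions above force $\sigma(\tau(i))=i$ and $\tau(\sigma(j))=j$, so $\tau$ and $\sigma$ are mutually inverse bijections, whence $m=n$. Finally, the two inclusions $Id_G(B_{\tau(i)})\subseteq Id_G(A_i)$ and (using $\sigma=\tau^{-1}$) $Id_G(A_i)=Id_G(A_{\sigma(\tau(i))})\subseteq Id_G(B_{\tau(i)})$ combine to give $Id_G(A_i)=Id_G(B_{\tau(i)})$.

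The only real content beyond bookkeeping is the appeal to \prettyref{lem:intersection of algebras in simple}, which itself rests on \prettyref{lem:product in simple algebras}; everything else is formal manipulation with minimality. I therefore do not anticipate a genuine obstacle, but the one place to be careful is to use the correct equivalent reformulation of minimality (no inclusions $Id_G(A_k)\subseteq Id_G(A_i)$ for $k\neq i$), since the definition given is in terms of intersections and the equivalence with the no-inclusion form relies on $G$-simplicity of the summands.
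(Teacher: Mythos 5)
Your proof is correct and follows essentially the same route as the paper: part (1) is the intersection lemma applied to each fixed $A_i$, and part (2) composes the two functions obtained from part (1) and invokes the no-inclusion reformulation of minimality (valid because the summands are $G$-simple) to force the composites to be the identity. The paper's argument is word-for-word the same in structure, including the careful appeal to that reformulation.
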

\begin{proof}

\begin{enumerate}
\item Let $i\in\left[n\right]$ be fixed then 
\[
\bigcap_{1}^{m}Id_{G}(B_{j})=Id_{G}(B)\subseteq Id_{G}(A)=\bigcap_{1}^{n}Id_{G}(A_{k})\subseteq Id(A_{i})
\]
We can now use \ref{lem:intersection of algebras in simple} to find
some $j\in\left[m\right]$ such that $Id_{G}(B_{j})\subseteq Id(A_{i})$
- denote this $j$ by $j=\tau(i)$. 
\item From part $(1)$ we can find functions $\tau:\left[n\right]\rightarrow\left[m\right]$
and $\pi:\left[m\right]\rightarrow\left[n\right]$ such that $Id_{G}(B_{\tau(i)})\subseteq Id_{G}(A_{i})$
and $Id_{G}(A_{\pi(j)})\subseteq Id_{G}(B_{j})$, so for all $i$
\[
Id_{G}(A_{\pi(\tau(i))})\subseteq Id_{G}(B_{\tau(i)})\subseteq Id_{G}(A_{i})
\]
but the set $\left\{ A_{i}\right\} _{1}^{n}$ is minimal and the $A_{i}$
are simple, so from the remark before the lemma we must have $\pi(\tau(i))=i$
and in particular $Id_{G}(B_{\tau(i)})=Id_{G}(A_{i})$, and $\tau$
is injective. The same is true in the other direction, so $\pi$ is
injective and therefore $n=m$, and $\tau$ is a permutation on $\left[n\right]$
such that $Id_{G}(A_{i})=Id_{G}(B_{\tau(i)})$.
\end{enumerate}
\end{proof}
The function $\tau$ in part $(1)$ is not injective in general, but
if we duplicate $B$ enough times we can find a new function $\tilde{\tau}$
from the simple components of $A$ to the simple components of $B^{N}$,
for $N$ large enough, such that $\tilde{\tau}$ is injective. If
$Id_{G}(B_{j})\subseteq Id_{G}(A_{i})$ for $B_{j},A_{i}$ simple
means that there is a graded embedding $A_{i}\hookrightarrow_{G}B_{j}$
then we can find a graded embedding $A\hookrightarrow_{G}B^{N}$ for
$N$ large enough, so we just proved that
\begin{thm}
Let $A_{1},...,A_{n}$ and $B_{1},...,B_{m}$ be $G$-simple algebras
and define $A=\bigoplus_{1}^{n}A_{i},\quad B=\bigoplus_{1}^{m}B_{j}$ 
\begin{enumerate}
\item Suppose that whenever $Id_{G}(B_{j})\subseteq Id_{G}(A_{i})$ than
there is a graded embedding $A_{i}\hookrightarrow_{G}B_{j}$. If $Id_{G}(B)\subseteq Id_{G}(A)$
then there is a graded embedding $A\hookrightarrow_{G}B^{N}$ for
$N$ large enough.
\item Suppose that whenever $Id_{G}(B_{j})=Id_{G}(A_{i})$ then there is
a graded isomorphism $A_{i}\cong_{G}B_{j}$. If the sets $\left\{ A_{i}\right\} _{1}^{n}$
and $\left\{ B_{j}\right\} _{1}^{m}$ are minimal and $Id_{G}(A)=Id_{G}(B)$
then $m=n$, and there is a permutation $\tau\in S_{n}$ such that
$A_{i}\cong_{G}B_{\tau(i)}$, and in particular $A\cong_{G}B$.
\end{enumerate}
\end{thm}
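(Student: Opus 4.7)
The plan is to deduce both parts of the theorem directly from the preceding lemma by translating the ideal-theoretic data it supplies into actual graded embeddings or isomorphisms via the hypotheses, and then assembling block by block. The only subtlety is the loss of injectivity in part (1), which will be absorbed by taking a direct power $B^N$.

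For part (1), the preceding lemma hands me a function $\tau:[n]\to[m]$ with $Id_G(B_{\tau(i)})\subseteq Id_G(A_i)$ for every $i$. By hypothesis this inclusion already yields a graded embedding $\varphi_i:A_i\hookrightarrow_G B_{\tau(i)}$. If $\tau$ were injective I could take the direct sum of the $\varphi_i$'s and land inside $\bigoplus_j B_{\tau(i)}\subseteq B$. The map $\tau$ need not be injective, however, so I enlarge the target: in $B^N=\bigoplus_{k=1}^N B$ the component $B_j$ appears $N$ times, and choosing $N\ge \max_j |\tau^{-1}(j)|$ (for instance $N=n$) lets me pick, for each $i$, a distinct copy of $B_{\tau(i)}$ inside $B^N$ and send $A_i$ into it via $\varphi_i$. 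The direct sum of these component maps is a graded embedding $A\hookrightarrow_G B^N$, using that the $A_i$ live in independent summands of $A$ and the chosen copies of $B_{\tau(i)}$ live in independent summands of $B^N$.

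For part (2), the lemma gives $m=n$ and a permutation $\tau\in S_n$ with $Id_G(A_i)=Id_G(B_{\tau(i)})$ for all $i$. The hypothesis then upgrades each such equality to a graded isomorphism $\psi_i:A_i\xrightarrow{\sim}_G B_{\tau(i)}$. Taking $\bigoplus_i\psi_i$ produces a graded isomorphism $A\cong_G\bigoplus_i B_{\tau(i)}=B$, where the last equality holds because $\tau$ is a permutation so the summands of $B$ are just reindexed.

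The main obstacle is really only in part (1): one must be careful that using $B^N$ instead of $B$ is the minimal price to pay for the failure of injectivity of $\tau$. Nothing deeper is needed—the previous lemma has already done all the heavy lifting, and the hypotheses of the present theorem are precisely what is required to convert its ideal-theoretic conclusions into structural statements about embeddings and isomorphisms.
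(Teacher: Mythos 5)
Your proposal is correct and follows essentially the same route as the paper: invoke the preceding lemma to obtain the function $\tau$ (respectively the permutation with equalities of ideals), upgrade each ideal inclusion (equality) to an embedding (isomorphism) via the hypothesis, and absorb the possible non-injectivity of $\tau$ in part (1) by passing to $B^{N}$ so that each $A_{i}$ lands in its own copy of $B_{\tau(i)}$. The paper's argument is exactly this sketch, so there is nothing to add.
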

For example, part $(1)$ is true for $G$ abelian, $A_{i},B_{j}$
are finite dimensional and $\KK$ is algebraically closed with $char(\KK)=0$.
Part $(2)$ is also true when the conditions in one of \cite{aljadeff_simple_2011,koshlukov_identities_2010}
is true. Notice that by our construction, if $A$ is simple then we
can always choose $N=1$ in part $(1)$.

We continue to the next generalization where $B$ can be any finite
dimensional algebra, and we want to reduce our problem to the semisimple
case. 

We first recall that in the non-graded case, if $B$ is Artinian (and
in particular if $B$ is finite dimensional) then its Jacobson radical
$J$ is nilpotent and if $J=0$ then $B$ is a direct sum of simple
algebras (\cite{herstein_noncommutative_2005}). 

Let $J$ be the Jacobson radical of $B$ and $\pi:B\rightarrow\nicefrac{B}{J}$
the projection map, then $B_{ss}=\nicefrac{B}{J}$ is semi simple,
and therefore a direct sum of simple algebras. In general we only
know that $Id(B)\subseteq Id(B_{ss})$ so if $Id(B)\subseteq Id(A)$,
then we don't know if there is any relation between $Id(A)$ and $Id(B_{ss})$.
Assume that we can show that we actually have $Id(B)\subseteq Id(B_{ss})\subseteq Id(A)$
so in a sense we don't add too much identities when we move from $Id(B)$
to $Id(B_{ss})$.

Wedderburn Malcev theorem states that we can write $B=B_{ss}\oplus J$
where the direct sum is as vector spaces, so the idea is to show that
\[
Id(B)\subseteq Id(A)\;\Rightarrow\; Id(B_{ss})\subseteq Id(A)\;\Rightarrow\; A\hookrightarrow B_{ss}\;\Rightarrow\; A\hookrightarrow B
\]

A similar process can be done in the graded case. Define $J_{G}$
to be the graded Jacobson radical of $B$, so $J_{G}$ is the intersection
of all maximal right graded ideals of $B$. Assume now that $G$ is
a finite group and that $\left|G\right|^{-1}\in B$. In \cite{cohen_group-graded_1984}
(theorem 4.4), Cohen and Montgomery proved that this condition implies
that the graded Jacobson radical $J_{G}$ equals to the non-graded
radical $J$. The result about the Wedderburn Malcev decomposition
$B\cong B_{ss}\oplus J_{G}$ can be found in \cite{stefan_wedderburn-malcev_1999}
(corollary 2.8). It is also well known that if $J_{G}=\left\{ 0\right\} $
then $B$ is a finite direct product of $G$-simple algebras.

Before we continue, we remark that some of these properties are true
in a more general setting.
\begin{lem}
\label{lem:Upper bound for A_ss}Assume that $G$ is a finite group.
Let $A$ be $G$-semisimple algebra, and $B$ a finite dimensional
$G$-graded algebra such that $Id_{G}(B)\subseteq Id_{G}(A)$ and
$\left|G\right|^{-1}\in B$. Denote by $J_{G}$ the graded Jacobson
radical of $B$ and $B_{ss}=\nicefrac{B}{J_{G}}$ its graded semi
simple image, then $Id_{G}(B_{ss})\subseteq Id_{G}(A)$. \end{lem}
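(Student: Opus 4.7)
The plan is to show that every homogeneous $f \in Id_{G}(B_{ss})$ lies in $Id_{G}(A)$. The strategy has two ingredients: produce a natural polynomial identity of $B$ built from $f$ using nilpotency of the graded radical, and then use $G$-semisimplicity of $A$ together with \prettyref{lem:product in simple algebras} to push this identity back down to an identity for $f$ alone.

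First I would unpack the structure of $B$. By the Cohen--Montgomery theorem (applicable since $|G|^{-1} \in B$), the graded radical $J_{G}$ coincides with the ordinary Jacobson radical of $B$, which is nilpotent by finite dimensionality; fix $k$ with $J_{G}^{k} = 0$. The graded Wedderburn--Malcev theorem then identifies $B \cong B_{ss} \oplus J_{G}$ as graded vector spaces, so the quotient $\pi : B \to B_{ss}$ is a graded homomorphism. For any homogeneous $f \in Id_{G}(B_{ss})$ and any graded assignment $\bar{b}$ in $B$, applying $\pi$ yields $\pi(f(\bar{b})) = f(\pi(\bar{b})) = 0$, so $f(\bar{b}) \in J_{G}$.

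The key step is to package this into a genuine identity of $B$. For any tuple of degrees $g_{1},\dots,g_{k-1} \in G$, consider
\[
F_{g_{1},\dots,g_{k-1}} \;=\; f^{(1)} \cdot y_{g_{1},1} \cdot f^{(2)} \cdot y_{g_{2},2} \cdots y_{g_{k-1},k-1} \cdot f^{(k)},
\]
where each $f^{(i)}$ is a copy of $f$ on fresh indeterminates and $y_{g_{j},j}$ has degree $g_{j}$. Any evaluation of $F_{g_{1},\dots,g_{k-1}}$ in $B$ lies in $J_{G} \cdot B \cdot J_{G} \cdots B \cdot J_{G} \subseteq J_{G}^{k} = 0$, using that $J_{G}$ is a two-sided ideal. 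Hence $F_{g_{1},\dots,g_{k-1}} \in Id_{G}(B) \subseteq Id_{G}(A)$ for every choice of intermediate degrees. I would then argue by contradiction: assume $f \notin Id_{G}(A)$, and write $A = \bigoplus A_{i}$ with each $A_{i}$ being $G$-simple, so $Id_{G}(A) = \bigcap Id_{G}(A_{i})$ and therefore $f \notin Id_{G}(A_{i_{0}})$ for some $i_{0}$. Iterating \prettyref{lem:product in simple algebras} inside the $G$-simple algebra $A_{i_{0}}$ produces homogeneous polynomials $h_{1},\dots,h_{k-1}$ with $f \cdot h_{1} \cdot f \cdots h_{k-1} \cdot f \notin Id_{G}(A_{i_{0}})$, hence not in $Id_{G}(A)$. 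Setting $g_{j} = \deg h_{j}$, this is a specialization of the corresponding $F_{g_{1},\dots,g_{k-1}}$, contradicting $F_{g_{1},\dots,g_{k-1}} \in Id_{G}(A)$.

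The main obstacle is essentially degree bookkeeping: parameterizing the auxiliary identity $F$ over all tuples of intermediate degrees is necessary precisely because \prettyref{lem:product in simple algebras} only delivers a homogeneous inserted polynomial of some unspecified degree, and we need the corresponding $F$ ready in advance. The conceptual content is clean --- nilpotency of $J_{G}$ converts the assumption $f \in Id_{G}(B_{ss})$ into a whole family of identities of $B$, while the non-degeneracy property of $G$-simple algebras prevents identities from disappearing under insertion of arbitrary factors --- so the proof should proceed without further technical surprises.
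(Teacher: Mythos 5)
Your proof is correct and follows essentially the same route as the paper: nilpotency of $J_{G}=J$ (via Cohen--Montgomery) turns $f\in Id_{G}(B_{ss})$ into an identity $f\cdot h_{1}\cdot f\cdots h_{k-1}\cdot f$ of $B$, while \prettyref{lem:product in simple algebras} lets you choose the homogeneous $h_{j}$ so that this product is not an identity of $A$, giving the contradiction. Your explicit reduction to a single $G$-simple summand $A_{i_{0}}$ of the semisimple algebra $A$ is in fact slightly more careful than the paper's own wording, which applies the lemma as if $A$ itself were $G$-simple.
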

\begin{proof}
Assume by negation that there is $f\in Id_{G}(B_{ss})\backslash Id_{G}(A)$,
wlog homogeneous. $B$ is finite dimensional so $J$ is nilpotent,
and because $\left|G\right|^{-1}\in B$ then $J_{G}=J$ and we can
find $k\in\NN$ such that $\left(J_{G}\right)^{k}=0$. $Id_{G}(B_{ss})$
is an ideal so $h_{i}\cdot f\in Id_{G}(B_{ss})$ for any homogeneous
polynomial $h_{i}$. Since $B_{ss}\cong\nicefrac{B}{J_{G}}$ then
$h_{i}\cdot f$ sends any graded assignment in $B$ to $J_{G}$.

From \prettyref{lem:product in simple algebras}, since $f\notin Id_{G}(A)$
and $A$ is $G$-simple, we can find $h_{i}$ such that $\prod_{1}^{k}\left(h_{i}\cdot f\right)\notin Id_{G}(A)$,
but for any assignment $\bar{b}$ in $B$ we get that
\[
\prod_{1}^{k}\left(h_{i}\cdot f\right)(\bar{b})\in\prod_{1}^{k}J_{G}=0\quad\Rightarrow\quad\prod_{1}^{k}\left(h_{i}\cdot f\right)\in Id_{G}(B)
\]
 and we get a contradiction since $Id_{G}(B)\subseteq Id_{G}(A)$.

This shows that there are no $f$ such that $f\in Id_{G}(B_{ss})\backslash Id_{G}(A)$,
or in other words $Id_{G}(B_{ss})\subseteq Id_{G}(A)$.
\end{proof}
Now combine this lemma, the last theorem and Wedderburn-Malcev theorem
for the graded case to get
\begin{thm}
Let $G$ be a finite abelian group. Let $A,B$ be two be $G$-graded
algebras over algebraically closed field $\KK$ with $char(\KK)=0$.
Assume that $A$ is $G$-semisimple, $B$ has a a unit and $Id_{G}(B)\subseteq Id_{G}(A)$.
Then there is a graded embedding $A\hookrightarrow_{G}B^{N}$ for
$N$ large enough, and if $A$ is simple then we can choose $N=1$.
\end{thm}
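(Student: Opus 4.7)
The plan is to reduce to the semisimple case on the $B$-side via the graded Wedderburn--Malcev decomposition, then invoke the theorem already established for direct sums of $G$-simple algebras.

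First I would observe that since $G$ is finite and $char(\KK)=0$ we have $|G|^{-1}\in\KK$, so by Cohen--Montgomery the graded Jacobson radical $J_{G}$ of $B$ coincides with the usual Jacobson radical $J$, and in particular $J_{G}$ is nilpotent. Because $B$ has a unit, the graded Wedderburn--Malcev theorem (Stefan, corollary 2.8, cited just before the statement) yields a graded subalgebra $C\subseteq B$ with $C\cong_{G}B_{ss}:=B/J_{G}$ and a vector space decomposition $B=C\oplus J_{G}$. In particular there is a graded embedding $B_{ss}\hookrightarrow_{G}B$, and hence $B_{ss}^{N}\hookrightarrow_{G}B^{N}$ for every $N$.

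Next I would apply \prettyref{lem:Upper bound for A_ss} to conclude $Id_{G}(B_{ss})\subseteq Id_{G}(A)$: here the only hypotheses used are that $A$ is $G$-semisimple, $B$ is finite dimensional with $|G|^{-1}\in B$, and $Id_{G}(B)\subseteq Id_{G}(A)$, all of which hold. Now both $A$ and $B_{ss}$ are finite direct sums of $G$-simple algebras over $\KK$, so I can apply the previous theorem (decomposing the inclusion of ideals of identities into pointwise inclusions on the simple components and matching them by an injection after passing to a large enough power): there exists $N$ and a graded embedding $A\hookrightarrow_{G}B_{ss}^{N}$. This uses in an essential way that for $G$ abelian and $A_{i},B_{j}$ finite dimensional $G$-simple, $Id_{G}(B_{j})\subseteq Id_{G}(A_{i})$ already implies a graded embedding $A_{i}\hookrightarrow_{G}B_{j}$, which is exactly part (1) of the main theorem proved in Section 3. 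Composing with $B_{ss}^{N}\hookrightarrow_{G}B^{N}$ from the previous paragraph gives the desired $A\hookrightarrow_{G}B^{N}$.

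For the final clause, if $A$ is $G$-simple the matching function between simple components is not needed: \prettyref{lem:intersection of algebras in simple} produces a single $G$-simple summand $B_{j}$ of $B_{ss}$ with $Id_{G}(B_{j})\subseteq Id_{G}(A)$, and then the main $G$-simple-to-$G$-simple theorem directly gives $A\hookrightarrow_{G}B_{j}\hookrightarrow_{G}B_{ss}\hookrightarrow_{G}B$, so one can take $N=1$. The only step that is not essentially bookkeeping is the appeal to Wedderburn--Malcev in the graded setting, so the main thing to make sure of is that the hypotheses (finite $G$, characteristic zero, $B$ unital) are exactly the ones under which the cited graded Wedderburn--Malcev theorem applies; everything else is just assembling results already proved in the paper.
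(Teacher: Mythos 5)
Your proof is correct and follows exactly the route the paper intends: the paper's own ``proof'' is the single sentence ``combine this lemma, the last theorem and Wedderburn--Malcev theorem for the graded case,'' and you have simply spelled out that combination (graded Wedderburn--Malcev to embed $B_{ss}$ into $B$, Lemma \ref{lem:Upper bound for A_ss} to pass the inclusion of identities to $B_{ss}$, and the preceding theorem on direct sums of $G$-simple algebras to obtain $A\hookrightarrow_{G}B_{ss}^{N}$, with $N=1$ in the simple case via Lemma \ref{lem:intersection of algebras in simple}).
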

\bibliographystyle{unsrt}
\addcontentsline{toc}{section}{\refname}\bibliography{graded_identities}

\end{document}